\newcommand{\Gal}{\operatorname{Gal}}
\newcommand{\End}{\operatorname{End}}
\newcommand{\p}{\mathfrak{p}}
\newcommand{\Fp}{\mathbf{F}_{\mathfrak{p}}}
\newcommand{\OK}{\mathcal{O}_{K}}
\newcommand{\Fl}{\mathbf{F}_{\ell}}	
\newcommand{\Aut}{\operatorname{Aut}}
\newcommand{\Zl}{\mathbf{Z}_{\ell}}
\newcommand{\im}{\operatorname{im}}
\newcommand{\Z}{\mathbf{Z}}
\newcommand{\F}{\mathbf{F}}
\newcommand{\Q}{\mathbf{Q}}
\newcommand{\GSp}{\operatorname{GSp}}
\newcommand{\Sp}{\operatorname{Sp}}
\newcommand{\SL}{\operatorname{SL}}
\newcommand{\Tl}{\mathrm{T}_{\ell}}
\newcommand{\T}{\operatorname{{T}}}
\newcommand{\Fix}{\operatorname{Fix}}
\newcommand{\Sy}{\mathsf{S}}
\newcommand{\B}{\mathsf{B}}
\newcommand{\Pa}{\mathsf{P}}
\numberwithin{equation}{subsection}
\theoremstyle{plain}
\newtheorem{thm}[equation]{Theorem}
\newtheorem{lem}[equation]{Lemma}
\newtheorem{example}[equation]{Example}
\newtheorem{defn}[equation]{Definition}
\newtheorem{cor}[equation]{Corollary}
\newtheorem{prop}[equation]{Proposition}
\newtheorem{question}[equation]{Question}
\newtheorem*{qu}{Question}
\theoremstyle{remark}
\newtheorem{rmk}[equation]{Remark}
\def\sbtors{{}_{{\textup{tor}}}}
\begin{document}

\title[Torsion Subgroups of Abelian Surfaces]{Divisibility of torsion subgroups of abelian surfaces over number fields}
\author{John Cullinan and Jeffrey Yelton}
\address{Department of Mathematics, Bard College, Annandale-On-Hudson, NY 12401}
\email{cullinan@bard.edu}

\address{Department of Mathematics and Computer Science, Emory University,  Atlanta, GA 30322}
\email{jeffrey.yelton@emory.edu}
\begin{abstract}
Let $A$ be a 2-dimensional abelian variety defined over a number field $K$.  Fix a prime number $\ell$ and suppose $\#A(\Fp) \equiv 0 \pmod{\ell^2}$ for a set of primes $\p \subset \OK$ of density 1.  When $\ell=2$ Serre has shown that there does not necessarily exist a $K$-isogenous $A'$ such that $\#A'(K)\sbtors \equiv 0 \pmod{4}$.  We extend those results to all odd $\ell$ and classify the abelian varieties that fail this divisibility principle for torsion in terms of the image of the mod-$\ell^2$ representation.
\end{abstract}

\subjclass[2010]{11G10; 12R32; 20G25}

\maketitle

\section{Introduction} \label{intro}

\subsection{Background} Let $A$ be an abelian variety defined over a number field $K$. If $\p$ is a prime of good reduction for $A$ and $m$ is a positive integer, then we say that $A$ \textbf{locally has a subgroup of order $m$} at $\p$ if $\#A(\Fp) \equiv 0 \pmod{m}$.  If $\p$ has absolute ramification index $e_\p < p-1$, then by \cite[Appendix]{katz}, the reduction-modulo-$\p$ map is injective on torsion:
$$
A(K)\sbtors \hookrightarrow A(\Fp).
$$
It follows that if $A(K)$ has a subgroup of order $m$ then it locally has a subgroup of order $m$ for a set of primes $\p$ of density 1.  On the other hand, if $A$ locally has a subgroup of order $m$ for a set of primes of density 1,  then it is not necessarily true that $A(K)$ has a global subgroup of order $m$.  For example, the elliptic curve with \textsf{LMFDB} label \textsf{11.a1} \cite{lmfdb_online} locally has a subgroup of order 5 for all $p \ne 11$, but has trivial Mordell-Weil group over $\Q$. Lang asked whether any abelian variety that locally has a subgroup of order $m$ for a set of primes of density 1 must be $K$-isogenous to one with a global subgroup of order $m$:

\begin{question}[Lang] \label{lq}
Let $A$ be an abelian variety defined over a number field $K$.  Suppose that $m \mid \#A_\p(\Fp)$ for a set of primes $\p$ of $K$ of density 1.  Does there exist a $K$-isogenous $A'$ such that $m \mid \#A'(K)\sbtors$?  
\end{question}

Note that if $m_1$ and $m_2$ are relatively prime integers and the answer to the above question is positive for a given abelian variety $A$ both when we take $m = m_1$ and when we take $m = m_2$, then the answer is positive for $A$ when we take $m = m_1 m_2$.  It therefore suffices to consider the question only in the case that $m$ is a prime power.

In \cite{katz}, Katz showed that the answer to Question \ref{lq} is affirmative when $A$ is an elliptic curve and, when $m$ is a prime number $\ell$, when $A$ is an abelian surface.  However, he showed by explicit construction that when $\dim A \geq 3$ and $m$ is odd the answer is negative (the degree $[K:\Q]$ of the field $K$ of definition of $A$ may be very large).  In \cite{2tors}, the first author considered the special case of $m=2$ and showed that the answer to Question \ref{lq} is affirmative when $\dim A = 3$ and negative when $\dim A \geq 4$. While we expect the answer when $\dim A=4$ and $m$ is a higher power of 2 to be negative as well, we do not pursue that question in this paper.  More generally, it may be of interest to determine whether a negative answer to Question \ref{lq} for fixed $\dim A$ and modulus $\ell^n$ implies a negative answer for modulus $\ell^m$ when $m >n$.

On the other hand, among all moduli $m = \ell^n$ and among all positive integers $\dim A$, there are two main cases where the answer to Question \ref{lq} is unknown: the case $\dim A = 3$ and $m=2^n$ ($n>1$), and the case $\dim A = 2$ and $m$ composite.   In all of these cases where there is a negative answer, it would be interesting to construct explicit examples of such abelian varieties where the degree $[K:\Q]$ of the field of definition of $A$ is minimized.  We refer to this as the \textbf{realization} problem and briefly address it at the end of this section.  

Returning to the open cases of Question \ref{lq} we focus exclusively on the situation where $m$ is composite and $\dim A = 2$ in this paper.  In an unpublished letter to Katz \cite{serretokatz}, Serre constructed a counterexample for the modulus 4 -- that is, he showed there exists an abelian surface $A$ that locally has a subgroup of order 4 for a set of primes of density 1 and no surface in the $K$-isogeny class of $A$ has a global subgroup of order 4.  More precisely, Serre constructed an open subgroup $H$ of the symplectic similitude group $\GSp_4(\Z_2)$ such that any abelian surface whose 2-adic image equals $H$ is one where Question \ref{lq} has a negative answer. However, Serre's construction does not immediately generalize to odd, composite moduli.  This is the starting point of our paper.

It is enough to answer Question \ref{lq} for prime-power moduli and in \cite[Introduction]{katz}, Katz shows that Question \ref{lq} is equivalent to Question \ref{kq} below (which in Katz's paper is \cite[Problem 1 (bis)]{katz}).  In order to state this equivalent version, we introduce some further notation.  Write $m = \ell^n$ for a prime number $\ell$ and a positive integer $n$.  Following Katz, we write $\Tl(A)$ for the $\ell$-adic Tate module of $A$ and $\rho_\ell:\Gal(\overline{K}/K) \to \Aut(\Tl(A))$ for the associated $\ell$-adic representation of $A$.  Now Question \ref{lq} is equivalent to the following.

\begin{question}[Katz] \label{kq}
Let $A$ be an abelian variety over a number $K$, $\ell$ a prime number, and $\rho_\ell: \Gal(\overline{K}/K) \to \Aut(\T_\ell(A))$ its $\ell$-adic representation.  Suppose we have 
$$
\det (\rho_\ell(\gamma)-1) \equiv 0 \pmod{\ell^n} \qquad \text{for all } \gamma \in \Gal(\overline{K}/K).
$$
Do there exist $\Gal(\overline{K}/K)$-stable lattices $\mathcal{L} \supset \mathcal{L}'$ in $\T_\ell(A)$ such that the quotient $\mathcal{L}/\mathcal{L}'$ has order $\ell^n$, and such that $\Gal(\overline{K}/K)$ acts trivially on $\mathcal{L}/\mathcal{L}'$?
\end{question}

Our main result is that the answer to Question \ref{kq} (and hence to Question \ref{lq}) is \emph{negative} for all moduli $\ell^2$ when $A$ is an abelian surface.  Our argument is purely group-theoretic; in the following section, we develop our general group-theoretic set-up.

\subsection{Restriction of the image} \label{sec1.2}  
Before stating our main theorem, we give a more detailed overview of our approach in this subsection.

Let $\mathcal{T}$ be a free $\Z_\ell$-module of rank $4$, which we equip with a nondegenerate alternating bilinear form 
$$
\langle \, , \, \rangle: \mathcal{T} \times \mathcal{T} \to \Zl.
$$
  Two groups of invertible operators on $\mathcal{T}$ that concern us are the group of symplectic similitudes, defined as  
$$\GSp(\mathcal{T}) := \{g \in \Aut(\mathcal{T}) \ | \ \langle g.v, g.w \rangle = m_g \langle v, w \rangle\},$$
 for some $m_g \in \Z_\ell^\times$ depending on $g$, and the symplectic group, defined as 
$$\Sp(\mathcal{T}) := \{g \in \Aut(\mathcal{T}) \ | \ \langle g.v, g.w \rangle = \langle v, w \rangle\} \subset \GSp(\mathcal{T}).$$  We may identify $\GSp(\mathcal{T})$ and $\Sp(\mathcal{T})$ with $\GSp_4(\Zl)$ and $\Sp_4(\Zl)$ respectively by fixing a symplectic basis of $\Tl(A)$ (see Definition \ref{dfn_symplectic} below).

We will be particularly interested in subgroups of $\Sp(\mathcal{T})$ satisfying a certain determinant condition, motivating us to define 
$$\Fix(\ell^n) := \lbrace \text{subgroups } G \subset \Sp(\mathcal{T}) \mid \det(g-1) \equiv 0 \pmod{\ell^n} \ \text{ for all $g \in G$} \rbrace.$$

\begin{rmk} \label{fix}
Our motivation for the above notation is that a subgroup $G \subset \Sp_4(\mathcal{T})$ lies in $\Fix(\ell^n)$ for some $n$ if and only if $G$ fixes an order-$\ell^n$ submodule of $\mathcal{T}$ under the induced mod-$\ell^n$ action.  Indeed, the latter condition is equivalent to saying that under this action, for each $g \in G$, the operator $g - 1 \in \Sp_4(\mathcal{T})$ kills a submodule of $(\mathcal{T} / \ell^n\mathcal{T})$ of order $\ell^n$.  We may assume that the image of $g - 1$ modulo $\ell^n$ is a diagonal matrix in $\End(\mathcal{T} / \ell^n \mathcal{T})$  after performing a series of invertible row and column operations on it.  It is then easy to see that in order for $g - 1$ to kill an order-$\ell^n$ submodule of $\mathcal{T} / \ell^n\mathcal{T}$, the product of its diagonal elements must be divisible by $\ell^n$, and so $\det(g - 1) \equiv 0$ (mod $\ell^n$), which is the defining criterion for membership in $\Fix(\ell^n)$.
\end{rmk}

We will show that the hypothesis that $G \in \Fix(\ell^2)$ is so strong that, with only a few exceptions, it forces the existence of pairs of $G$-stable lattices of relative index $\ell^2$ with trivial $G$-action on the quotient.  We will refer to these ``exceptions" using the following terminology.

\begin{defn} \label{dfn counterexample}
Given any free $\Z_\ell$-module $\mathcal{T}$ of rank $4$, we call any $G \in \Fix(\ell^2)$ for which there do not exist $G$-stable lattices $\mathcal{L}' \subset \mathcal{L} \subset \mathcal{T}$ with $[\mathcal{L}:\mathcal{L}'] = \ell^2$ and trivial $G$-action on the quotient $\mathcal{L}/\mathcal{L}'$ a \textbf{counterexample}.
\end{defn}

In the context of our investigation of Question \ref{kq}, the $\Z_\ell$-module $\mathcal{T}$ is the $\ell$-adic Tate module $T_\ell(A)$ of an abelian variety $A$ over a number field $K$ for some prime $\ell$.  After possibly replacing $A$ with an isogenous abelian variety, we assume that $A$ is principally polarized so that we may define the Weil pairing as a skew-symmetric form on $\Tl(A)$.  It is well known that the Weil pairing $\langle \, , \, \rangle$ on $T_\ell(A)$ is equivariant with respect to the action of the absolute Galois group $\Gal(\overline{K} / K)$ on $\Tl(A)$.  (See \cite[\S16]{milne} for more details on the Weil pairing.)  The image of the natural $\ell$-adic Galois representation $\rho_\ell:\Gal(\overline{K}/K) \to \Aut(\Tl(A))$ is therefore contained in $\GSp(T_\ell(A))$.  The following proposition explains our choice of the term ``counterexample" in Definition \ref{dfn counterexample} by showing that any counterexample $G$ in the sense of Definition \ref{dfn counterexample} gives a counterexample for Question \ref{kq} (and hence to Question \ref{lq}).

\begin{prop} \label{works_over_number_field}

Let $G \in \Fix(\ell^2)$ be a counterexample in the sense of Definition \ref{dfn counterexample}.  Then there is an abelian surface $A$ over a number field $K$ which provides a counterexample for Question \ref{kq} such that, taking $\mathcal{T}$ to be $T_\ell(A)$, we have $\im(\rho_\ell) \cap \Sp(\mathcal{T}) = G$.

\end{prop}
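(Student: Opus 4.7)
The plan is to proceed in three stages: first, extend $G$ to a larger subgroup $H \subset \GSp(\mathcal{T})$ that still lies in $\Fix(\ell^2)$ and satisfies $H \cap \Sp(\mathcal{T}) = G$; second, realize $H$ as the full $\ell$-adic image $\im(\rho_\ell)$ of an abelian surface over a number field by base change from a surface with maximal $\ell$-adic image; and third, deduce that $A$ is a counterexample to Question \ref{kq}.

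For the first stage, I would take the scalar embedding $s \colon 1 + \ell^2 \Z_\ell \hookrightarrow \GSp(\mathcal{T})$, $\lambda \mapsto \lambda \cdot \mathrm{Id}_\mathcal{T}$, and set $H \ddef G \cdot s(1 + \ell^2 \Z_\ell)$. Every element of $H$ has the form $\lambda g$ with $\lambda = 1 + \ell^2 u$ and $g \in G$; expanding the degree-$4$ determinant polynomial yields
\[
\det(\lambda g - \mathrm{Id}) \;\equiv\; \det(g - \mathrm{Id}) \;\equiv\; 0 \pmod{\ell^2},
\]
so $H \in \Fix(\ell^2)$. Since the similitude of $\lambda g$ is $\lambda^2$ and squaring is a bijection on the pro-$\ell$ group $1 + \ell^2 \Z_\ell$ (using that $\ell$ is odd), the element $\lambda g$ lies in $\Sp(\mathcal{T})$ if and only if $\lambda = 1$; hence $H \cap \Sp(\mathcal{T}) = G$.

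For the second stage, I would invoke a standard open-image result to produce a principally polarized abelian surface $A_0/\Q$ whose $\ell$-adic Galois image, under the identification coming from a symplectic basis of $T_\ell(A_0)$, equals all of $\GSp(\mathcal{T})$. For odd $\ell$ such $A_0$ arises from a generic genus-$2$ Jacobian via Hilbert irreducibility (results of Serre, Hall, Dieulefait, etc.). Letting $L \subset \Kbar$ be the fixed field of $\ker(\rho_\ell^{A_0})$ and $K \ddef L^H$, the base change $A \ddef A_0 \times_\Q K$ satisfies $\im(\rho_\ell^A) = H$, so in particular $\im(\rho_\ell^A) \cap \Sp(\mathcal{T}) = G$. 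For the third stage, $H \in \Fix(\ell^2)$ is exactly the hypothesis of Question \ref{kq}, and the non-existence of $G$-stable lattices of the prescribed kind (built into the definition of counterexample) immediately implies non-existence of $\Gal(\Kbar/K)$-stable ones, because $G \subset H = \im(\rho_\ell^A)$ forces every $H$-stable lattice to be $G$-stable.

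The main obstacle is the realization step: one needs $A_0$ with sufficiently large $\ell$-adic image over a fixed base, together with a clean reconciliation of the similitude character $\nu \circ \rho_\ell^A$ with the cyclotomic character of $K$. With the construction above, this amounts to the compatibility $\nu(H) = 1 + \ell^2 \Z_\ell$ matching the condition $K \supset \Q(\zeta_{\ell^2})$, which should be automatic from $K = L^H$, but spelling this out and pinning down the precise open-image statement for $A_0$ is the step that will require the most care.
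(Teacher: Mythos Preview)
Your three-stage plan is the same as the paper's: start from an abelian surface $A_0/\Q$ with full $\ell$-adic image $\GSp(\mathcal{T})$, enlarge $G$ inside $\GSp(\mathcal{T})$ while staying in $\Fix(\ell^2)$, and pass to the fixed field. The difference is \emph{what} you adjoin. You take only central scalars $s(1+\ell^2\Z_\ell)$; the paper instead takes the full level-$\ell^2$ congruence subgroup $\Gamma'(\ell^2) := \{g \in \GSp(\mathcal{T}) : g \equiv 1 \pmod{\ell^2}\}$ and sets $G' := \langle G, \Gamma'(\ell^2)\rangle$.

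That difference is exactly where your argument has a gap. Your $H = G \cdot s(1+\ell^2\Z_\ell)$ is open in $\GSp(\mathcal{T})$ only if $G$ is already open in $\Sp(\mathcal{T})$, and Definition~\ref{dfn counterexample} makes no such assumption. If $G$ has infinite index in $\Sp(\mathcal{T})$ then $[K:\Q] = [\GSp(\mathcal{T}):H]$ is infinite and $K$ is not a number field at all. The paper's $G'$ contains the open subgroup $\Gamma'(\ell^2)$ and so is open automatically, making $K/\Q$ finite with no hypothesis on $G$; the $\Fix(\ell^2)$ check is immediate since every element of $G'$ is congruent mod $\ell^2$ to one of $G$, and $G \subset G'$ transfers the lattice obstruction. (The price is that the paper's construction yields $\im(\rho_\ell)\cap\Sp(\mathcal{T}) = G\cdot\Gamma(\ell^2)$, so the literal equality with $G$ in the statement needs $\Gamma(\ell^2)\subset G$, which holds for every counterexample the paper actually builds.) Two minor points: your ``$\ell$ odd'' caveat is unnecessary, since $\lambda\in 1+\ell^2\Z_\ell$ with $\lambda^2=1$ forces $\lambda=1$ for $\ell=2$ as well (as $-1\notin 1+4\Z_2$), and the big-image input \cite{landesman_big_gal} covers all primes simultaneously; and your cyclotomic-compatibility worry is a non-issue, since $K=L^H$ already gives $\im(\rho_\ell^A)=H$ directly by Galois theory.
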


\begin{proof}

It is well known that for any prime $\ell$, there exists an abelian surface $A$ over $\Q$ such that the image of its natural $\ell$-adic Galois representation coincides with $\GSp(T_\ell(A))$.  In fact, we quote the stronger result of \cite[Theorem 1.3]{landesman_big_gal} that the image of the $\ell$-adic representation $\rho_\ell:\Gal(\overline{\Q}/\Q) \to \Aut(\Tl(A))$ of the hyperelliptic Jacobian defined by
$$
y^2 = x^6 + 7471225x^5 + 16548721x^4 + 6639451x^3 + 16857421x^2+ 20754195x+9508695
$$
is maximal for all $\ell$.

Now let $\Gamma'(\ell^2) \subset \GSp(\mathcal{T})$ be the subgroup consisting of all symplectic similitudes $g \in \GSp(\mathcal{T})$ such that $g \equiv 1$ (mod $\ell^2$), and let $G' \subset \GSp(T_\ell(A))$ be the subgroup generated by $G$ and $\Gamma'(\ell^2)$.  Let $K / \Q$ be the extension fixed by $\rho_\ell^{-1}(G')$; since $G'$ contains the finite-index subgroup $\Gamma'(\ell^2) \subset \GSp(T_\ell(A))$ and is therefore an open subgroup of $\GSp(T_\ell(A))$, the field extension $K / \Q$ is finite and so $K$ is a number field.  Now we consider $A$ as an abelian surface over $K$ and restrict $\rho_\ell$ to the absolute Galois group $\Gal(\overline{K} / K)$ of $K$; it is clear that we still have $\det(\rho_\ell(\gamma) - 1) \equiv 0$ (mod $\ell^2$) for all $\gamma \in \Gal(\overline{K} / K)$.  Meanwhile, since $G' \supset G$ and $G$ is a counterexample, there do not exist $\Gal(\overline{K}/K)$-stable lattices $\mathcal{L} \supset \mathcal{L}'$ in $\T_\ell(A)$ such that the quotient $\mathcal{L}/\mathcal{L}'$ has order $\ell^2$, and such that $\Gal(\overline{K}/K)$ acts trivially on $\mathcal{L}/\mathcal{L}'$, and we have therefore found a counterexample to Question \ref{kq} taking $n = 2$.
\end{proof}

By the discussion of \cite[pp.~482-483]{katz}, given an abelian surface $A$ arising from a counterexample $G \subset \Sp(\mathcal{T})$ via Proposition \ref{works_over_number_field}, after possibly replacing $A$ with an isogenous abelian surface, we may assume that its associated mod-$\ell$ Galois representation has a trivial $1$-dimensional subrepresentation.  In fact, we will see from Proposition \ref{prop_basic_shape} below that this property implies that the semisimplification of the mod-$\ell$ reduction $\overline{G}$ of a counterexample $G$ always has at least two $1$-dimensional factors.  We further distinguish between the following two cases: subgroups $G \subset \Sp(\mathcal{T})$ for which the semisimplification of the mod-$\ell$ reduction either 
\begin{enumerate}[(A)]
\item has four 1-dimensional factors, two of which are trivial, or
\item has an irreducible 2-dimensional factor and two trivial 1-dimensional factors.
\end{enumerate}
We remark that Serre's original counterexample for Question \ref{kq} in \cite{serretokatz} had four 1-dimensional factors and we will review this counterexample in \S\ref{sylowsection} below. 

Further, we will only mainly consider subgroups $G$ of $\Sp(\mathcal{T})$ where the kernel of reduction modulo $\ell$ is as large as possible (to be made precise below).  As we will see in \S\ref{lattices}, this assumption gives us a simple criterion for checking which lattices are $G$-stable and, therefore, whether there are any quotients of order $\ell^2$ with trivial $G$-action.

To summarize, in this paper for the purpose of finding and classifying counterexamples we will only consider subgroups $G \subset \Sp(\mathcal{T})$ such that 
\begin{itemize}
\item $\overline{G}$ fixes a subspace of $\mathcal{T}/\ell \mathcal{T}$ of dimension $1$; and
\item the kernel of the natural projection $G$ to its reduction modulo $\ell$ is as large as possible; for us, this will mean $G$ contains the full kernel $\Gamma(\ell)$ of the reduction-mod-$\ell$ map from $\Sp(\mathcal{T})$ to $\Sp(\mathcal{T} / \ell \mathcal{T})$ or contains a certain index-$\ell$ subgroup of $\Gamma(\ell)$.
\end{itemize}
What we will show using group theory is that even under these added hypotheses, counterexamples $G \in \Fix(\ell^2)$ do exist; more specifically, counterexamples satisfying property A exist for any $\ell$, and counterexamples satisfying property B exist when $\ell = 2$.  This in particular implies a negative answer to Question \ref{kq}.

\subsection{Statement of the Results}  With this motivation and background in place, we now state our main results.  For clarity of exposition we only give the \textit{maximal} counterexamples in this statement of the main theorem, while in the course of proving the results we give the minimal requirements that a counterexample must meet.

\begin{thm} \label{mainthm}
Let $\ell$ be a prime number; let $\mathcal{T}$ be a free $\Z_\ell$-module of rank $4$ equipped with a nondegenerate alternating pairing; and suppose $G \subset \Sp(\mathcal{T})$ satisfies
\begin{enumerate}
\item[(i)] $\det(g-1) \equiv 0\pmod{\ell^2}$ for all $g \in G$, and
\item[(ii)] there do not exist $G$-stable lattices $\mathcal{L}' \subset \mathcal{L} \subset \mathcal{T}$ of relative index $\ell^2$ with trivial $G$-action on the quotient $\mathcal{L}/\mathcal{L}'$ , and
\item[(iii)] $G$ is maximal among subgroups of $\Sp(\mathcal{T})$ satisfying (i) and (ii).
\end{enumerate}
Then one of the following holds.
\begin{enumerate}
\item[(a)] We have $\ell =2$, the image of $G$ modulo $2$ is isomorphic to $D_4 \times C_2$ or $S_3 \times C_2$, and we have $[\Gamma(2) : G \cap \Gamma(2)] = 2$.  In the former case, the semisimplification of the mod-$2$ representation consists of four 1-dimensional factors and in the latter it consists of two 1-dimensional factors and a 2-dimensional factor.
\item[(b)] We have $\ell = 2$, and $G$ is the full preimage in $\Sp(\mathcal{T})$ of a subgroup of $\Sp(\mathcal{T}/\ell\mathcal{T})$ isomorphic to $S_3$.  In this case the semisimplification of the mod-$2$ representation consists of two 1-dimensional factors and a 2-dimensional factor.
\item[(c)] We have $\ell \geq 3$, and $G$ is the full preimage in $\Sp(\mathcal{T})$ of a subgroup of $\Sp(\mathcal{T}/\ell\mathcal{T})$ of isomorphism type $\Z/\ell \rtimes (\Z/\ell)^\times$ or $\Z/\ell \times (\Z/\ell \rtimes (\Z/\ell)^\times)$.  In this case, the semisimplification of the mod-$\ell$ representation consists of four 1-dimensional factors.
\end{enumerate}
\end{thm}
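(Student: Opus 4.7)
The plan is to combine the structural reduction of Section \ref{sec1.2} with a mod-$\ell$ classification of subgroups of $\Sp(\mathcal{T}/\ell\mathcal{T})$ stabilizing a line. By Proposition \ref{works_over_number_field} and the discussion in Section \ref{sec1.2}, any maximal counterexample $G$ may be assumed to satisfy the two bulleted hypotheses listed there: $\overline{G}$ fixes a $1$-dimensional subspace of $\mathcal{T}/\ell\mathcal{T}$, and the kernel of $G \to \overline{G}$ contains $\Gamma(\ell)$ or a distinguished index-$\ell$ subgroup. Invoking Proposition \ref{prop_basic_shape}, one then sees that the semisimplification of $\overline{G}$ splits into exactly the two flavors labeled (A) and (B) in Section \ref{sec1.2}: four $1$-dimensional factors, or a $2$-dimensional irreducible factor plus two trivial $1$-dimensional factors. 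Thus the problem is to determine, in each of (A) and (B) and for each choice of the kernel of reduction, the maximal $\overline{G}$ for which the counterexample condition survives.

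Next, I would exploit the lattice criterion to be set up in Section \ref{lattices}: when the kernel of reduction equals $\Gamma(\ell)$ (resp.\ the distinguished index-$\ell$ subgroup), the existence of a pair $\mathcal{L}' \subset \mathcal{L} \subset \mathcal{T}$ of $G$-stable lattices of index $\ell^2$ with trivial quotient action is equivalent to a purely mod-$\ell$ condition on $\overline{G}$ acting on $\mathcal{T}/\ell\mathcal{T}$ (essentially the existence of a $2$-dimensional $\overline{G}$-fixed subspace of the appropriate isotropy type, modulo a correction when the kernel is only index-$\ell$ inside $\Gamma(\ell)$). Combining this with condition (i), which on $\overline{G}$ forces $\det(\bar g - 1) \equiv 0 \pmod \ell$ for every $\bar g \in \overline{G}$, I would enumerate the subgroups $\overline{G}$ of the stabilizer of a line in $\Sp(\mathcal{T}/\ell\mathcal{T})$ satisfying both constraints. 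In Case (A), $\overline{G}$ sits inside the Borel/parabolic subgroup stabilizing the chosen line, and the relevant subgroups are built from the unipotent radical and its torus normalizer, yielding $\Z/\ell \rtimes (\Z/\ell)^\times$ and $\Z/\ell \times (\Z/\ell \rtimes (\Z/\ell)^\times)$ for odd $\ell$ (case (c)), and, for $\ell = 2$, the Sylow analysis of Section \ref{sylowsection} identifies $D_4 \times C_2$ as the maximal option (first half of (a)). In Case (B), the relevant stabilizer has Levi $\SL_2$ acting irreducibly on a $2$-dimensional factor; for odd $\ell$ the richer representation theory forces a $2$-dimensional trivial quotient, so no counterexample survives, while for $\ell = 2$ one has $\SL_2(\F_2) \cong S_3$, producing $S_3$ with the full $\Gamma(2)$ (case (b)) and $S_3 \times C_2$ with the index-$2$ kernel (second half of (a)).

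The principal obstacle is the detailed verification in Case (A) for odd $\ell$: inside the parabolic stabilizing a line, there is a unipotent subgroup of order $\ell^3$ together with a Levi torus, and one must carefully determine which augmentations by torus elements preserve both the determinant congruence modulo $\ell^2$ (which, after lifting through $\Gamma(\ell)$, becomes a sharp eigenvalue condition) and the absence of a $2$-dimensional $\overline{G}$-fixed quotient. Maximality in (iii) is then handled by showing that any proper over-group of the listed $\overline{G}$ either acquires an element violating the determinant congruence or forces the desired trivial quotient to reappear; for the $\ell = 2$ cases this is the content of the Sylow and case-by-case calculations in Section \ref{sylowsection}, while for odd $\ell$ the Levi-unipotent decomposition of the parabolic makes it feasible to check that enlarging the torus part always produces the trivial quotient. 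Assembling the surviving maximal options in the four subcases (property A or B, $\ell = 2$ or $\ell$ odd) yields exactly the three possibilities (a), (b), (c) in the statement.
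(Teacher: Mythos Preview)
Your high-level roadmap matches the paper's: split by semisimplification type (A) or (B), then enumerate subgroups of the line-stabilizing parabolic. But two of your key reductions are misformulated, and without the correct versions the enumeration cannot be carried out.

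First, the two bulleted hypotheses of \S\ref{sec1.2} are not consequences of Proposition~\ref{works_over_number_field}; that proposition goes the other way (group-theoretic counterexamples produce abelian-surface counterexamples). The assumption that $\overline{G}$ fixes a line is justified by Katz's affirmative answer for abelian surfaces at prime modulus (\cite[pp.~482--483]{katz}), while the assumption on the kernel of reduction is not imposed a priori at all: in each of \S\S\ref{sylowsection}--\ref{parabolic_section} the paper \emph{proves} that adjoining $\Gamma(\ell)$ (resp.\ $\Gamma(\ell)\cap\ker f$) to a counterexample yields another counterexample, and only then deduces that maximal counterexamples contain it.

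Second, and more seriously, your lattice criterion is not the one the paper uses. The existence of $G$-stable $\mathcal{L}'\subset\mathcal{L}$ of index $\ell^2$ with trivial quotient is \emph{not} equivalent to a ``$2$-dimensional $\overline{G}$-fixed subspace of the appropriate isotropy type.'' The governing object is the homomorphism $f:\Pa_\ell\to\Z/\ell$ sending $g$ to its $(1,4)$-entry divided by $\ell$ (a genuinely mod-$\ell^2$ datum invisible in $\overline{G}$): Proposition~\ref{latticeprop}(a) shows that $L_3/\ell L_1$ has trivial $G$-action iff $f|_G=0$, so every counterexample has $f$ nontrivial. The $\Fix(\ell^2)$ condition is then translated via the explicit determinant formulas (\ref{iwahori_det}) and (\ref{det_formula}) into constraints among the coordinate maps $\alpha,\beta,\gamma,\delta,\epsilon,f$, and the classification proceeds by a case split you do not make: for $\ell\geq 3$, subgroups of the Sylow $\Sy_\ell$ are eliminated entirely (Theorem~\ref{jointsylowthm}(a)), and the surviving counterexamples live in $\B_\ell\smallsetminus\Sy_\ell$ where the nontrivial torus character $\epsilon$ is what blocks trivial quotients (Theorem~\ref{iwahori_prop}). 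Your sketch never introduces $f$, $\epsilon$, or the determinant formula, and the phrase ``enlarging the torus part always produces the trivial quotient'' is backwards: it is precisely the presence of the torus (nontrivial $\epsilon$) that \emph{prevents} the trivial quotient and makes the odd-$\ell$ counterexamples work.
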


In the course of proving Theorem \ref{mainthm} we give explicit representations, including the dimensions of the factors in the semisimplifications, of all of the groups that occur.  In Section \ref{S2.1} we give a more detailed exposition of how the group theoretic result of Theorem \ref{mainthm} implies the following corollary, answering Lang's original Question \ref{lq}.

\begin{cor}
For all square moduli, the answer to Question \ref{kq}, and hence to Question \ref{lq}, is negative. In addition, when the modulus is 4 there are counterexamples to Question \ref{lq} realized by absolutely simple abelian surfaces.
\end{cor}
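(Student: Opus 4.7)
The plan is to combine Theorem \ref{mainthm} with Proposition \ref{works_over_number_field} and the reduction to prime-power moduli, and then to verify absolute simplicity in the $\ell=2$ case via a Faltings-type argument.

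For the first assertion I would argue as follows. Since $\ell^2$ is itself a square modulus and, by the reduction to prime-power moduli recalled in the introduction, it is enough to handle one prime power at a time, it suffices to exhibit for each prime $\ell$ a counterexample to Question \ref{lq} for modulus $\ell^2$. Theorem \ref{mainthm} produces, for every prime $\ell$, a subgroup $G \subset \Sp(\mathcal{T})$ that is a counterexample in the sense of Definition \ref{dfn counterexample}: it satisfies $\det(g-1)\equiv 0 \pmod{\ell^2}$ for all $g\in G$ but admits no pair of $G$-stable lattices of relative index $\ell^2$ with trivial quotient. Feeding any such $G$ into Proposition \ref{works_over_number_field} yields an abelian surface $A/K$ realizing $G = \im(\rho_\ell)\cap \Sp(T_\ell(A))$ and serving as a counterexample to Question \ref{kq}; the equivalence of Questions \ref{kq} and \ref{lq} established in \cite[Introduction]{katz} then converts this to a counterexample to Question \ref{lq} for the square modulus $\ell^2$.

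For the second assertion, I would inspect the proof of Proposition \ref{works_over_number_field}: the surface $A$ is the base change to a number field $K$ of the hyperelliptic Jacobian $A_0/\Q$ of \cite[Theorem 1.3]{landesman_big_gal}, whose $2$-adic Galois image equals the full $\GSp(T_\ell(A_0))$. I would argue that $A_0$ is absolutely simple by contradiction: if not, then $\End(A_{0,\overline{\Q}})\otimes \Q$ would strictly contain $\Q$, and Faltings' theorem would force $\rho_\ell$ restricted to the absolute Galois group of some finite extension $K_0/\Q$ to commute with a nontrivial endomorphism and hence lie in a proper algebraic subgroup of $\GSp_4$, contradicting the fact that this restricted image is open in $\GSp_4(\Zl)$. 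Absolute simplicity is preserved by base change, so $A=A_{0,K}$ is also absolutely simple and provides the promised absolutely simple counterexample for modulus $4$.

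The main obstacle of the corollary is entirely packaged inside Theorem \ref{mainthm}; once that classification is in hand the corollary reduces to a formal realization step together with the short Faltings argument for absolute simplicity of Landesman's Jacobian.
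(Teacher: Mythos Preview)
Your argument is correct and matches the paper's (largely implicit) proof: the paper simply points to Theorem \ref{mainthm} together with the discussion around Proposition \ref{works_over_number_field} in \S\ref{S2.1}, and you have spelled out exactly that deduction. One small remark: your invocation of Faltings is harmless but unnecessary---once $\rho_\ell(\Gal(\overline{\Q}/K_0))$ is open in $\GSp_4(\Z_\ell)$ it acts absolutely irreducibly on $V_\ell$, so any endomorphism of $A_{0,\overline{\Q}}$ (which automatically commutes with this action on $T_\ell$) is forced to be a scalar by Schur's lemma alone; the classical injection $\End(A_{0,\overline{\Q}}) \hookrightarrow \End(T_\ell(A_0))$ then gives $\End(A_{0,\overline{\Q}}) = \Z$ without appealing to the Tate conjecture. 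Also, your aside about the ``reduction to prime-power moduli'' is a red herring here: in this paper ``square moduli'' just means $\ell^2$ for $\ell$ prime, so no reduction is needed.
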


Our paper proceeds as follows.  In \S\ref{background} we review the relevant background on symplectic groups and representation theory, and we give an explicit description of the stable lattice structure.  The classification of the maximal counterexamples then amounts to a group theory argument.  We break this argument up over the ensuing three sections.  In \S\ref{sylowsection} we classify the counterexamples for which $G$ is contained in a maximal pro-$\ell$ subgroup of $\Sp(\mathcal{T})$; we show that counterexamples of this type exist when $\ell=2$ and do not exist for $\ell \geq 3$.  This is where we will recall Serre's original counterexample.  Then in \S\ref{iwahorisection} we classify the counterexamples for which $G$ lies in the Iwahori subgroup of $\Sp(\mathcal{T})$; this is where the semisimplification of the mod-$\ell$ representation consists of four 1-dimensional factors.  Finally, \S\ref{parabolic_section} is where we consider the case where the semisimplification of the mod-$\ell$ representation contains an irreducible 2-dimensional factor.   There we show that no such counterexamples exist when $\ell \geq 3$ and classify the ones that do when $\ell=2$.  

\subsection{The Realization Problem} Our main theorem shows, from the point of view of group theory, that the answer to Question \ref{lq} is negative for abelian surfaces.  A follow-up question, which we call the \emph{realization problem}, is to construct abelian surfaces with these $\ell$-adic images.  By the Galois-theoretic argument in the proof of Proposition \ref{works_over_number_field}, we can construct such an abelian surface $A$ over a number field $K$ with $[K:\Q]$ large.  We therefore pose the following question.

\begin{qu}
Fix a prime number $\ell$.  What is the minimum degree $[K:\Q]$ such that there exists an abelian variety $A/K$ with $\ell$-adic Galois image equal to one of the groups in Theorem \ref{mainthm}?
\end{qu}

We do not pursue the realization problem in this paper, though we mention that the LMFDB's current database of over 68,000 genus 2 curves is a natural place to search for potential examples \cite{lmfdb}.  In particular, by sampling at a large number of primes, one can find examples of surfaces that have a local subgroup of (say) order $\ell^2$ but a global subgroup of order $\ell$.  For example, the isogeny class \href{http://www.lmfdb.org/Genus2Curve/Q/1270/a/}{1270.a} contains the hyperelliptic curve with label \href{http://www.lmfdb.org/Genus2Curve/Q/1270/a/325120/1}{1270.a.325120.1}.  By sampling a large number of primes, this Jacobian experimentally has a subgroup of order 4 (but a global subgroup of order 2), and the 2-torsion field has Galois group $S_3 \times S_2$. Furthermore, the semisimplification of the mod-2 representation contains an irreducible 2-dimensional factor and the Jacobian is absolutely simple over $\Q$.  

In order to rigorously determine whether or not a Jacobian defines a counterexample, one would need to determine both the mod-$\ell$ and mod-$\ell^2$ images of the $\ell$-adic representation.  A related question for future exploration would be to employ known estimates on the distribution of eigenvalues of Frobenius for genus-2 curves to determine how many primes one should sample to be ``reasonably" certain (in a precise sense) that an isogeny class consists entirely of Jacobians with a local subgroup of order $\ell^2$.

\subsection{Group Theory Notation} Given any prime $\ell$, suppose we have a free $\Z_\ell$-module $\mathcal{T}$ of rank $4$ equipped with a nondegerate alternating pairing $\langle \, , \, \rangle$.  Throughout this paper, for any integer $n \geq 1$, we denote the kernel of the reduction-modulo-$\ell^n$ map from $\Sp(\mathcal{T})$ to $\Sp(\mathcal{T} / \ell^n \mathcal{T})$ by
$$\Gamma(\ell^n) := \{g \in \Sp(\mathcal{T}) \ | \ g \equiv 1 \ \ (\mathrm{mod} \ \ell^n)\}.$$

The form $\langle \, , \, \rangle$ induces a nondegenerate alternating pairing on $\mathcal{T} / \ell^n \mathcal{T}$ via reduction modulo $\ell^n$ for each $n \geq 1$.  
We have (surjective)  homomorphisms $\pi_{\ell^n} : \Sp(\mathcal{T}) \to \Sp(\mathcal{T}/\ell^n\mathcal{T})$ for all $n \geq 1$, as well as (surjective) homomorphisms $\pi_{\ell^n \to \ell^m} : \Sp(\mathcal{T}/\ell^n\mathcal{T}) \to \Sp(\mathcal{T}/\ell^m\mathcal{T})$ for any integers $n > m \geq 1$.  

We use both the notations $C_n$ and $\Z/n$ for a cyclic group of order $n$ depending on whether we are emphasizing a multiplicative or additive group structure, respectively.  The notation $S_n$ and $A_n$ refer to the symmetric and alternating groups on $n$ letters, respectively.  We use $D_n$ to denote the dihedral group of order $2n$. \\

\subsection{Acknowledgments.}

The authors would like to extend their gratitude to J.-P. Serre for providing the first author with his (unpublished) letter to Katz (\cite{serretokatz}) which in large part inspired this whole project, and for graciously giving permission to discuss it in this work.  We would also like to thank Drew Sutherland for helpful comments and to thank the anonymous referees for many helpful comments and clarifications.

\section{Symplectic Groups} \label{background}

The basic definitions and background on symplectic groups are widely available in the literature: see \cite{book} for a general development or \cite{landesman} for concise definitions, especially for symplectic groups over general commutative rings.  We will be content with a very brief overview here.  Let $\ell$ be a prime number and $\mathcal{T}$ be a free rank-$4$ $\Zl$-module equipped with a nondegenerate alternating bilinear form $\langle \, , \, \rangle$ as in \S\ref{sec1.2}.  
Many of our explicit group-theoretic arguments are performed in coordinates and so we make the following formal definition.

\begin{defn} \label{dfn_symplectic}

A \textbf{symplectic} basis of $\mathcal{T}$ is an ordered basis $\{e_1, e_2, e_3, e_4\}$ that satisfies $\langle e_1, e_4 \rangle = \langle e_2, e_3 \rangle = 1$ and $\langle e_1, e_2 \rangle = \langle e_1, e_3 \rangle = \langle e_4, e_2 \rangle = \langle e_4, e_3 \rangle = 0$.

\end{defn}

To ease notation, write 
$$
\overline{G} := \pi_\ell(G)
$$
for any subgroup $G \subset \Sp(\mathcal{T})$; this reduction $\overline{G}$ acts naturally on the $4$-dimensional symplectic $\Fl$-vector space $\mathcal{T}/\ell\mathcal{T}$.  Note that $\Gamma(\ell) \lhd \Sp(\mathcal{T})$ is the kernel of $\pi_\ell$.  Some of the counterexamples $G$ of Theorem \ref{mainthm} are maximal in the sense that $\Gamma(\ell) \subset G$; such a subgroup $G$ can be described simply as the inverse image under $\pi_{\ell}$ of $\overline{G}$.

\subsection{The mod-$\ell$ Representation} \label{S2.1}

We now determine the ``shape" of $\overline{G}$, where $G \in \Fix(\ell^2)$ is a counterexample satisfying the assumptions given in \S\ref{sec1.2}.

\begin{prop} \label{prop_basic_shape}

Let $G \in \Fix(\ell^2)$ be a counterexample whose reduction $\overline{G}$ acts trivially on a $1$-dimensional subspace of $\mathcal{T} / \ell \mathcal{T}$.  Then, with respect to the reduction of a suitable symplectic basis of $\mathcal{T}$, the elements of $\overline{G}$ are all matrices of the form 
\begin{align} \label{basic_shape}
\left(
\begin{smallmatrix}
1 & 0 & 0 & 0 \\
* & * & * & 0 \\
* & * & * & 0 \\
* & * & * & 1 \\
\end{smallmatrix}
\right).
\end{align}

\end{prop}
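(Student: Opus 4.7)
The plan is to choose a symplectic basis of $\mathcal{T}$ whose reduction is adapted to the $\overline{G}$-fixed line $W \subset \mathcal{T}/\ell\mathcal{T}$, and then to read off the claimed matrix shape column by column using only the conditions that every $\overline{g} \in \overline{G}$ fixes $W$ pointwise and preserves the symplectic pairing. Worth noting upfront: the proof requires nothing beyond the existence of the trivial one-dimensional subrepresentation, and will use neither the $\Fix(\ell^2)$ hypothesis nor counterexample-ness.

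To build the basis, I would take $e_4 \in \mathcal{T}$ to be any primitive lift of a generator of $W$. Because the alternating form on $\mathcal{T}$ is nondegenerate and $e_4$ is primitive, there exists $e_1 \in \mathcal{T}$ with $\langle e_1, e_4 \rangle = 1$; the orthogonal complement $\Span(e_1, e_4)^\perp$ is then a rank-$2$ nondegenerate symplectic direct summand, so it admits a symplectic pair $\{e_2, e_3\}$ with $\langle e_2, e_3 \rangle = 1$. The resulting $\{e_1, e_2, e_3, e_4\}$ is a symplectic basis of $\mathcal{T}$ in the sense of Definition~\ref{dfn_symplectic} whose reduction has $\bar{e}_4$ spanning $W$; this is just the standard symplectic Gram--Schmidt procedure.

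The claimed shape now follows from three short verifications. First, $\overline{g}\bar{e}_4 = \bar{e}_4$ directly gives the last column $(0,0,0,1)^{\top}$. Second, for any $v \in W^\perp$ we have $\langle \overline{g} v, \bar{e}_4 \rangle = \langle \overline{g} v, \overline{g} \bar{e}_4 \rangle = \langle v, \bar{e}_4 \rangle = 0$, so $\overline{g}$ preserves $W^\perp = \Span(\bar{e}_2, \bar{e}_3, \bar{e}_4)$, forcing the top entries of the second and third columns to vanish. Third, in our symplectic basis the $\bar{e}_1$-coefficient of any vector $v$ equals $\langle v, \bar{e}_4 \rangle$, so the $(1,1)$-entry of the matrix of $\overline{g}$ equals $\langle \overline{g}\bar{e}_1, \bar{e}_4 \rangle = \langle \overline{g}\bar{e}_1, \overline{g}\bar{e}_4 \rangle = \langle \bar{e}_1, \bar{e}_4 \rangle = 1$.

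There is no serious obstacle: the proposition amounts to a routine symplectic-basis normalization. The only point requiring any real care is the lifting step, namely that a symplectic basis of $\mathcal{T}/\ell\mathcal{T}$ with $\bar{e}_4$ spanning $W$ extends to a symplectic basis of $\mathcal{T}$; this is handled by the nondegeneracy of the form on $\Span(e_1, e_4)^\perp$ and the fact that $\langle e_1, e_4 \rangle$ can be chosen to be a unit in $\Z_\ell$.
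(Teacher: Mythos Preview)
Your proof is correct and follows essentially the same route as the paper's: both construct a symplectic basis of $\mathcal{T}$ with $e_4$ lifting a generator of the fixed line via symplectic Gram--Schmidt, then use $G$-invariance of the pairing to see that $\overline{G}$ preserves the orthogonal complement $\langle \bar e_2,\bar e_3,\bar e_4\rangle$ and to compute the $(1,1)$-entry as $\langle \overline{g}\bar e_1,\bar e_4\rangle = 1$. Your observation that neither the $\Fix(\ell^2)$ hypothesis nor the counterexample property is actually used is also accurate.
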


\begin{proof}

The statement amounts to asserting that there exists a $\overline{G}$-invariant subspace $\overline{W} \subset \mathcal{T} / \ell \mathcal{T}$ of dimension $3$ such that $\overline{G}$ acts trivially on the quotient $(\mathcal{T} / \ell \mathcal{T}) / \overline{W}$.  We now construct a symplectic basis $\lbrace e_1,e_2,e_3,e_4 \rbrace$ of $\mathcal{T}$ such that the reduction $\overline{e}_4 \in \mathcal{T}/\ell\mathcal{T}$ of $e_4$ generates the $1$-dimensional subspace which is fixed by $\overline{G}$.  Since the symplectic pairing is nondegenerate, there is an element $e_1 \in \mathcal{T}$ such that $\langle e_1, e_4 \rangle = 1$.  Let $V$ be the span of $\{e_1, e_4\}$.  Since the sum of the dimensions of any subspace of $\mathcal{T}$ and its orthogonal complement must equal the dimension of $\mathcal{T}$, we get that the orthogonal complement $V^{\bot}$ of $V$ has dimension $2$.  Now clearly there is a nondegenerate symplectic pairing on $V / (V \cap V^{\bot})$ induced by $\langle \, , \, \rangle$, so $V / (V \cap V^{\bot})$ must have even dimension; since $\langle \,, \, \rangle$ is nontrivial on $V$, we must have $V^{\bot} \neq V$ and so $V \cap V^{\bot} = \{0\}$.  Then since $V \oplus V^{\bot} = \mathcal{T}$, the pairing $\langle \,, \, \rangle$ cannot be trivial on $V^{\bot}$, and any ordered basis $\{e_2, e_3\}$ satisfies $\langle e_2, e_3 \rangle = 1$ after appropriate scaling of one of the elements.  Thus we have an ordered set $\{e_1, e_2, e_3, e_4\}$ which is a symplectic basis of $\mathcal{T}$.

Now let $W \subset \mathcal{T}$ be the span of $\{e_2, e_3, e_4\}$.  It is clear that $W$ is the orthogonal complement of the subspace spanned by $e_4$.  It now follows from the $G$-invariance of the symplectic pairing that since $\overline{G}$ fixes the element $\overline{e}_4$, the reduction $\overline{W}$ of $W$ is invariant under the action of $\overline{G}$.  Note further that for any $g \in G$, we have $\langle g(e_1), g(e_4) \rangle = \langle e_1, e_4 \rangle = 1$.  It follows that since $g(e_4) \equiv e_4$ (mod $\ell$), we have $\langle g(e_1), e_4 \rangle \equiv 1$ (mod $\ell$) and therefore $g(e_1) \equiv e_1 + w$ (mod $\ell$) for some $w \in W$.  Since $\overline{W}$ and the image of $\overline{e}_1$ modulo $\ell$ generate $\mathcal{T} / \ell \mathcal{T}$, we get the desired statement.
\end{proof}

Given any counterexample $G \in \Fix(\ell^2)$ satisfying the hypothesis of the above proposition, we fix a symplectic basis $\{e_1, e_2, e_3, e_4\}$ of $\mathcal{T}$ such that, with respect to its reduction modulo $\ell$ (which we denote by $\{\overline{e}_1, \overline{e}_2, \overline{e}_3, \overline{e}_4\}$ and which is a basis of $\mathcal{T} / \ell \mathcal{T}$), the elements of the group $\overline{G}$ are matrices of the form given in (\ref{basic_shape}).  (Note that since $G \subset \Sp(\mathcal{T})$, there are additional constraints placed on the off-diagonal entries by $\langle \, , \,\rangle$.)

In order to classify possible counterexamples $G \in \Fix(\ell^2)$, we will now introduce three subgroups $\Sy_\ell \subset \B_\ell \subset \Pa_\ell \subset \Sp(\mathcal{T})$.  We first define $\Pa_\ell$ to be the subgroup of matrices whose reductions modulo $\ell$ are of the lower block-triangular form given in (\ref{basic_shape}).  According to Proposition \ref{prop_basic_shape} above, any counterexample $G \in \Fix(\ell^2)$ is conjugate, in $\Sp(\mathcal{T})$, to a subgroup of $\Pa_\ell$.

For any element of $\Pa_\ell$, we highlight the off-diagonal lower entries in the matrix form of its reduction modulo $\ell$ as 
\begin{align} \label{parabolic_block2}
\left( \begin{smallmatrix} 1&0&0&0 \\ \alpha & * & * &0 \\ \beta & \gamma & * & 0 \\ \delta & \beta' & \alpha' & 1 \end{smallmatrix} \right),
\end{align}
 and we let $\alpha, \alpha', \beta, \beta', \gamma, \delta : \Pa_\ell \to \Z / \ell$ be the maps given by taking an element $g \in \Pa_\ell$ to the corresponding entries of $\pi_{\ell}(g)$ shown in (\ref{parabolic_block2}).  (These maps are not homomorphisms in general.)  Noting from (\ref{parabolic_block2}) that the (1,4) entry $g_{1,4}$ of any element $g \in \Pa_\ell$ is divisible by $\ell$, we also define a map 
$$
f : \Pa_\ell \to \Z / \ell, \ \ g \mapsto \pi_{\ell}(g_{1,4} / \ell).
$$  
It is easy to verify that this map $f$ is a homomorphism.

The \textbf{Iwahori subgroup} $\B_\ell \subset \Sp(\mathcal{T})$ is the subgroup consisting of all matrices whose reduction modulo $\ell$ is lower-triangular.  We write $\Sy_\ell \subset \B_\ell$ for the (maximal pro-$\ell$-) \textbf{Sylow subgroup} of $\B_\ell$; it is the subgroup of $\B_{\ell}$ consisting of those triangular matrices whose reduction modulo $\ell$ has all $1$'s along the diagonal.  Given a group $G \in \Fix(\ell)$ we write $G \subset \B_{\ell}$ (resp. $G \subset \Sy_{\ell}$) if there is a basis $\{e_1, e_2, e_3, e_4\}$ with the properties given above with respect to which $G$ is contained in $\B_{\ell}$ (resp. $\Sy_{\ell}$).  With respect to this chosen basis, a calculation with $\langle \, , \, \rangle$ shows that if we let $\epsilon : \B_\ell \to (\Z / \ell)^{\times}$ be the map taking an element $g \in \B_\ell$ to the $(2,2)$ entry of $\pi_\ell(g)$, the $(3,3)$ entry of $\pi_{\ell}(g)$ is given by $1 / \epsilon(g)$, and that we have the formulas below:

\begin{align}\label{prime}
\alpha'(g) &= -\alpha(g) / \epsilon(g) \\ 
\beta'(g) &= \beta(g)\epsilon(g) - \alpha(g)\gamma(g). \nonumber
\end{align}
  We note that the map $\epsilon : \B_\ell \to (\Z / \ell)^{\times}$ is a homomorphism whose kernel coincides with $\Sy_{\ell}$.  For any $g \in \B_\ell$, the determinant $\det(g-1)$ satisfies
\begin{align} \label{iwahori_det}
\det(g-1) \equiv  \left(\gamma(g)\alpha(g)^2 + 
\frac{\beta(g) \alpha(g)(1-\epsilon(g)^2)}{\epsilon(g)}  +
 \frac{\delta(g)(1-\epsilon(g))^2}{\epsilon(g)}
\right)f(g) \ell\pmod{\ell^2}.
\end{align}
  
We further note that when we restrict to $\Sy_{\ell}$, the maps $\alpha, \beta : \Sy_\ell \to \Z / \ell$ become homomorphisms, and we get the simplified formulas from (\ref{prime}) below:
\begin{align} \label{prime_simplified}
\alpha'(g) &= -\alpha(g) \\ 
\beta'(g) &= \beta(g) - \alpha(g)\gamma(g). \nonumber
\end{align}
In this special case, the determinant formula (\ref{iwahori_det}) simplifies to 
\begin{align} \label{det_formula}
\det(g-1) \equiv \alpha(g)^2\gamma(g)f(g) \ell \pmod{\ell^2}.
\end{align}

\subsection{Lattices} \label{lattices}  As we see from the conditions given in Question \ref{kq}, in order to determine whether a given group $G \in \Sp(\mathcal{T})$ is a counterexample, it is crucial to understand the stable lattice structure of $\mathcal{T}$ under the action of $G$.  The following proposition will allow us to essentially work with the mod-$\ell$ representation and search for pairs of stable subspaces of $\mathcal{T} / \ell \mathcal{T}$ whose quotients admit trivial $G$-action rather than search through all sublattices of $\mathcal{T}$.  

\begin{prop} \label{containslT}

Assume that $G \in \Fix(\ell^2)$ contains $\Gamma(\ell) \cap \ker(f)$ and that $f$ is not trivial on $G$.  Let $\mathcal{L} \subset \mathcal{T}$ be a $G$-stable lattice.  Then we have $\mathcal{L} \subset \ell \mathcal{T}$ or $\mathcal{L} \supset \ell \mathcal{T}$.

\end{prop}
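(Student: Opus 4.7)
The plan is to show that if $\mathcal{L} \not\subset \ell\mathcal{T}$, then $\ell\mathcal{T} \subset \mathcal{L}$. Consider the $\Fl$-subspaces $V_0 := (\mathcal{L} + \ell\mathcal{T})/\ell\mathcal{T} \subset \mathcal{T}/\ell\mathcal{T}$ and $V_1 := (\mathcal{L} \cap \ell\mathcal{T})/(\mathcal{L} \cap \ell^2\mathcal{T}) \subset \ell\mathcal{T}/\ell^2\mathcal{T}$, where the latter is identified with $\mathcal{T}/\ell\mathcal{T}$ via $\ell y \leftrightarrow y$. By hypothesis $V_0 \neq 0$, and a standard elementary-divisor argument applied to the inclusion $\mathcal{L} \cap \ell\mathcal{T} \subset \ell\mathcal{T}$ shows that $\ell\mathcal{T} \subset \mathcal{L}$ holds precisely when $V_1 = \mathcal{T}/\ell\mathcal{T}$, so the goal reduces to proving this equality.

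The key input is the containment $\Gamma(\ell) \cap \ker f \subset G$. For any $g = 1 + \ell M \in \Gamma(\ell) \cap \ker f$ and any $v \in \mathcal{L}$, the $G$-stability of $\mathcal{L}$ forces $(g-1)v = \ell Mv \in \mathcal{L} \cap \ell\mathcal{T}$, whose class in $V_1$ identifies with $\bar M \bar v \in \mathcal{T}/\ell\mathcal{T}$. Since the image of $\Gamma(\ell) \cap \ker f$ in $\Gamma(\ell)/\Gamma(\ell^2) \cong \mathfrak{sp}_4(\Fl)$ is precisely the codimension-one subspace $\mathfrak{g}_f := \{\bar M \in \mathfrak{sp}_4(\Fl) : \bar M_{1,4} = 0\}$, I deduce $\mathfrak{g}_f \cdot V_0 \subset V_1$. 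The argument now splits on whether $V_0$ lies in the fixed line $\Fl\bar e_4$ of Proposition~\ref{prop_basic_shape}: if there exists $\bar v \in V_0$ with $\bar v \notin \Fl\bar e_4$, an explicit computation with a spanning set for $\mathfrak{g}_f$ shows that $\mathfrak{g}_f\bar v$ already spans $\mathcal{T}/\ell\mathcal{T}$, forcing $V_1 = \mathcal{T}/\ell\mathcal{T}$.

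In the remaining case $V_0 = \Fl\bar e_4$, the same calculation yields only $\mathfrak{g}_f\bar e_4 = \Span(\bar e_2,\bar e_3,\bar e_4)$, so here the hypothesis that $f$ is nontrivial on $G$ is essential. Picking $g_0 \in G$ with $f(g_0) = c \neq 0$ and any lift $v = e_4 + \ell t \in \mathcal{L}$ of $\bar e_4$, the matrix shape \eqref{basic_shape} together with $(g_0)_{1,4} \equiv c\ell \pmod{\ell^2}$ gives by direct expansion that the $e_1$-coefficient of $g_0v - v$ modulo $\ell^2$ equals $c\ell$; thus the class of $g_0v - v$ in $V_1$ has nonzero $\bar e_1$-component, which combined with the hyperplane already in $V_1$ forces $V_1 = \mathcal{T}/\ell\mathcal{T}$. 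The main obstacle will be the Lie-algebra calculation in the generic branch, namely verifying that $\mathfrak{g}_f\bar v$ spans $\mathcal{T}/\ell\mathcal{T}$ whenever some coordinate of $\bar v$ among $\bar e_1, \bar e_2, \bar e_3$ is nonzero. This is handled by a short case analysis on the first nonzero coordinate, using a handful of explicit one-parameter elements of $\mathfrak{g}_f$ obtained from the symmetry constraints defining $\mathfrak{sp}_4(\Fl)$.
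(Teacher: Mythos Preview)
Your proof is correct and follows the same overall strategy as the paper: both split on whether the mod-$\ell$ image of $\mathcal{L}$ lies in $\Fl\bar e_4$, use $\Gamma(\ell)\cap\ker(f)\subset G$ to manufacture elements of $\mathcal{L}\cap\ell\mathcal{T}$, and in the degenerate case invoke an element with $f(g_0)\neq 0$ to pick up the missing $\bar e_1$-direction. The difference is in packaging. The paper first isolates a standalone orbit result (Lemma~\ref{containslTlemma}) that pins down the full $\Z_\ell$-orbit of any $v\in\mathcal{T}\smallsetminus\ell\mathcal{T}$ under $\Gamma(\ell)\cap\ker(f)$, proved via explicit symplectic transvections $T_u:w\mapsto w+\ell\langle w,u\rangle u$ together with an induction through the levels $\Gamma(\ell^n)$; the proposition then follows by reading off that orbit. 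You instead work directly modulo $\ell^2$, identifying $\Gamma(\ell)/\Gamma(\ell^2)$ with $\mathfrak{sp}_4(\Fl)$ and reducing the question to the linear-algebra computation of $\mathfrak{g}_f\cdot\bar v$; the passage from $V_1=\mathcal{T}/\ell\mathcal{T}$ back to $\ell\mathcal{T}\subset\mathcal{L}$ is handled by Nakayama (equivalently, elementary divisors). Your route is somewhat more economical---no auxiliary lemma, no level induction---while the paper's orbit lemma is a marginally stronger statement (it describes the orbit exactly, not just its span modulo $\ell^2$) that could in principle be reused. One small remark: your appeal to Proposition~\ref{prop_basic_shape} is really just an appeal to the definition of $\Pa_\ell$ (the shape \eqref{basic_shape}), since the present proposition does not assume $G$ is a counterexample; this does not affect the argument.
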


In order to prove the above proposition, we first need a lemma.

\begin{lem} \label{containslTlemma}

Choose a vector $v \in \mathcal{T} \smallsetminus \ell \mathcal{T}$.

a) For any integer $n \geq 1$, the orbit of $v$ under the action of the subgroup $\Gamma(\ell^n) \subset \Sp(\mathcal{T})$ coincides with the coset $v + \ell^n \mathcal{T}$.

b) Let $W \subset \mathcal{T}$ be the submodule consisting of vectors $w$ such that $\langle w, e_4 \rangle \in \ell \Z_{\ell}$.  The orbit of $v$ under the action of the subgroup $\Gamma(\ell) \cap \ker(f) \subset \Sp(\mathcal{T})$ coincides with the coset $v + W$ (resp. $v + \ell \mathcal{T}$) if $v$ is (resp. is not) a (unit) scalar multiple of $e_4 \in \mathcal{T}$.

\end{lem}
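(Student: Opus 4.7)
My plan is to prove each of (a) and (b) via a double containment, where the nontrivial direction amounts to constructing explicit elements of the relevant subgroup of $\Sp(\mathcal{T})$ that realize prescribed translations of $v$. For both parts, the ``easy'' inclusion of the orbit inside the stated coset follows directly from the defining congruences: in (a), any $g \in \Gamma(\ell^n)$ satisfies $g \equiv 1 \pmod{\ell^n}$, forcing $g(v) - v \in \ell^n \mathcal{T}$; in (b), the extra constraint $f(g) \equiv 0 \pmod \ell$ (equivalently, $g_{14} \in \ell^2 \Zl$) can be computed in coordinates to yield the smaller stated coset precisely when $v$ is a unit scalar multiple of $e_4$, and to impose no extra restriction otherwise (since then $(g(v)-v)_1$ draws on entries of $g$ other than $g_{14}$, which are unconstrained beyond being $\equiv 0 \pmod \ell$).

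For the reverse inclusions, which constitute the substantive content, I would produce explicit elements of $\Gamma(\ell^n)$ (resp.\ $\Gamma(\ell) \cap \ker(f)$) achieving each prescribed shift. In (a), since $v \in \mathcal{T} \smallsetminus \ell \mathcal{T}$ is primitive, the nondegeneracy of $\langle\,,\,\rangle$ lets me extend $v$ to a symplectic basis $\{v, v_2, v_3, v^*\}$ with $\langle v, v^*\rangle = 1$ and $\langle v_2, v_3\rangle = 1$. Writing a target $u \in \mathcal{T}$ in this basis, I can realize $v \mapsto v + \ell^n u$ as a product of symplectic transvections $T_{c,w}(x) = x + c \langle x, w\rangle w$ with $c \in \ell^n \Zl$ (automatically in $\Sp(\mathcal{T}) \cap \Gamma(\ell^n)$), together with a couple of manifestly symplectic ``shear'' moves such as $v \mapsto v + \ell^n v^*$ (fixing the other basis vectors), which is symplectic thanks to $\langle v^*, v^*\rangle = 0$. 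Part (b) proceeds in the same spirit with the additional bookkeeping of the constraint $f = 0$: when $v$ is not a unit scalar multiple of $e_4$, I would distribute the required shift across entries of $g$ other than $g_{14}$ to keep $g_{14} \in \ell^2 \Zl$ while still sweeping out all of $\ell \mathcal{T}$; when $v$ is a unit scalar multiple of $e_4$, the analogous constructions naturally land in the smaller allowed coset.

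The main obstacle I anticipate is verifying that every element we construct is genuinely in $\Sp(\mathcal{T})$ and not merely in a congruence subgroup of $\GL(\mathcal{T})$: writing $g = 1 + \ell^n M$, the symplectic condition expands to $M^T J + JM = -\ell^n M^T J M$, so the infinitesimal condition $M^T J + JM = 0$ is insufficient and a quadratic correction term must be handled. My preferred remedy is to build $g$ entirely from symplectic atoms---transvections and the simple shears above---whose symplectic nature is manifest, so that no separate symplectic check is required and the desired shifts are realized by inspection.
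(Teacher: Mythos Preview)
Your approach is sound in outline and takes a genuinely different route from the paper's. You work directly in $\mathcal{T}$, extending $v$ to a symplectic basis and building the required shifts out of explicit symplectic atoms; the paper instead works level by level through the filtration $\Gamma(\ell^m)/\Gamma(\ell^{m+1})$, using only the transvections $T_u : w \mapsto w + \ell^m\langle w,u\rangle u$ to realize every shift modulo $\ell^{m+1}$, then inducting on the level and passing to the inverse limit. Your direct approach is conceptually cleaner once the atoms are in place; the paper's approach has the advantage that one never has to chase error terms of higher order.

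Two places in your plan deserve more care. First, in (a), transvections $T_{c,w}$ with $c\in\ell^n\Z_\ell$ alone do not directly shift $v$ in its own direction, since $\langle v,v\rangle = 0$; a product such as $T_{-\ell^n,v^*}\,T_{\ell^n,v+v^*}$ sends $v$ to $v+\ell^n v$ only up to an error in $\ell^{2n}\mathcal{T}$. You must either iterate and pass to a limit (at which point you have essentially reinvented the paper's inductive scheme) or throw in the manifestly symplectic diagonal element $v\mapsto (1+\ell^n c)v$, $v^*\mapsto (1+\ell^n c)^{-1}v^*$, $v_2\mapsto v_2$, $v_3\mapsto v_3$ as another atom. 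Second, your plan for (b) when $v$ is not a unit multiple of $e_4$ (``distribute the required shift across entries other than $g_{14}$'') is vague, and making it precise while simultaneously maintaining the symplectic constraints on all rows requires real work. The paper handles this much more slickly: having already proved (a) for $n=2$, it reduces to working modulo $\ell^2$, then observes that since $\bar v$ and $\bar e_4$ are linearly independent one can choose $b$ with $\langle v,b\rangle\equiv 0$ but $\langle e_4,b\rangle\not\equiv 0\pmod\ell$, so that the transvection $T_b$ fixes $v$ yet lies outside $\ker(f)$; multiplying any element of $\Gamma(\ell)/\Gamma(\ell^2)$ by the appropriate power of $T_b$ then lands it in $\ker(f)$ without changing its effect on $v$. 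I would recommend adopting this trick for (b) even if you keep your direct construction for (a).
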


\begin{proof}

In order to prove part (a), we first show that for any integer $m \geq 1$, the orbit of any vector $v \in \mathcal{T} / \ell^{m+1} \mathcal{T}$ under the action of the subgroup $\Gamma(\ell^m) / \Gamma(\ell^{m+1}) \subset \Sp(\mathcal{T} / \ell^{m+1} \mathcal{T})$ consists of all vectors $v' \in \mathcal{T} / \ell^{m+1} \mathcal{T}$ equivalent to $v$ modulo $\ell^m$.  (Below we abuse notation slightly and write $\langle \, , \, \rangle$ for the pairing on $\mathcal{T} / \ell^{m+1} \mathcal{T}$ induced by reducing the symplectic pairing on $\mathcal{T}$ modulo $\ell^{m+1}$; it takes values in $\Z / \ell^{m+1}$.)  In order to do this, we define, for any vector $u \in \mathcal{T} / \ell^{m+1} \mathcal{T}$, the (unipotent) operator $T_u \in \Gamma(\ell^m) / \Gamma(\ell^{m+1})$ given by $w \mapsto w + \ell^m \langle w, u \rangle u$ for $w \in \mathcal{T} / \ell^{m+1} \mathcal{T}$.  Now choose $a \in (\mathcal{T} \smallsetminus \ell \mathcal{T}) / \ell^{m+1} \mathcal{T}$; we proceed to show that $v + \ell^m a$ lies in the orbit of $v$ under $\Gamma(\ell^m) / \Gamma(\ell^{m+1})$.  First assume that $\langle v, a \rangle \not\equiv 0 \ (\mathrm{mod} \ \ell)$.  In this case, we clearly have $T_u^{\langle v, a \rangle^{-1}}(v) = v + \ell^m a$ and we are done.  Now assume that $\langle v, a \rangle \equiv 0 \ (\mathrm{mod} \ \ell)$.  Then one sees from a simple dimension-counting argument that there is a vector $b \in (\mathcal{T} \smallsetminus \ell \mathcal{T}) / \ell^{m+1} \mathcal{T}$ satisfying $\langle v, b \rangle \equiv 1 \ (\mathrm{mod} \ \ell)$ and $\langle b, a \rangle \not\equiv 0 \ (\mathrm{mod} \ \ell)$.  Now we compute $(T_{a+b}^{\langle b, a \rangle^{-1}} T_{b}^{-1})(v) = v + \ell^m a$, and we are done proving our claim about the orbit of $v$ under $\Gamma(\ell^m)$ modulo $\ell^{m + 1}$.

Now we fix $n \geq 1$ and claim that for any $n' > n \geq 1$ the orbit of any $v \in \mathcal{T} / \ell^{n'} \mathcal{T}$ under the action of the subgroup $\Gamma(\ell^n) / \Gamma(\ell^{n'}) \subset \Sp(\mathcal{T} / \ell^{n'} \mathcal{T})$ consists of all vectors $v' \in \mathcal{T} / \ell^{n'} \mathcal{T}$ equivalent to $v$ modulo $\ell^{n'}$.  We prove this claim by performing induction on $n'$, noting that we get the $n' = n + 1$ case by applying the statement we proved in the last paragraph with $m = n$.  Now if our claim is true for a particular $n' > n$, we see that it holds for $n' + 1$ as well: indeed, for any vectors $v, w \in \mathcal{T} / \ell^{n' + 1} \mathcal{T}$ with $v \equiv w$ (mod $\ell^{n'}$), the inductive assumption provides an operator $g \in \Gamma(\ell^{n}) / \Gamma(\ell^{n' + 1})$ which takes $v$ to some $w'$ which is equivalent to $w$ modulo $\ell^{n'}$; then by applying the statement we proved in the last paragraph with $m = n'$ we get an operator $h \in \Gamma(\ell^{n'}) / \Gamma(\ell^{n' + 1}) \subset \Gamma(\ell^n) / \Gamma(\ell^{n' + 1})$ which takes $w'$ to $w$.  Therefore, $w$ is in the orbit of $v$ under the action of $\Gamma(\ell^n) / \Gamma(\ell^{n' + 1})$, and our claim is proved.  Now by moving to the inverse limit of the groups $\Gamma(\ell^n) / \Gamma(\ell^{n'})$ and the modules $\mathcal{T} / \ell^{n'}\mathcal{T}$, we get the statement of part (a).

Now by applying part (a) for $n = 2$, it is clear that in order to prove part (b), it suffices only to consider the orbit of a vector $v \in (\mathcal{T} \smallsetminus \ell \mathcal{T}) / \ell^2 \mathcal{T}$ under the action of the subgroup $(\Gamma(\ell) \cap \ker(f)) / \Gamma(\ell^2)$.  First assume that $v$ is (the image modulo $\ell^2$ of) a scalar multiple of $e_4$.  Then it follows immediately from the definition of $f$ that an operator $T \in \Gamma(\ell) / \Gamma(\ell^2)$ lies in $\ker(f) / \Gamma(\ell^2)$ if and only if $\langle v, T(v) \rangle = \langle v, T(v) - v \rangle = 0$, or equivalently, if $T(v) = v + \ell w$ for some $w \in W$, whence the first statement of (b).  Now assume that $v$ is not (the image modulo $\ell^2$ of) a scalar multiple of $e_4$.  Then there exists a vector $b \in (\mathcal{T} \smallsetminus \ell \mathcal{T}) / \ell^{n+1} \mathcal{T}$ satisfying $\langle e_4, b \rangle \not\equiv 0 \ (\mathrm{mod} \ \ell)$ and $\langle v, b \rangle \equiv 0 \ (\mathrm{mod} \ \ell)$; the first condition implies that $T_b \notin (\Gamma(\ell) \cap \ker(f)) / \Gamma(\ell^2)$, while the second condition implies that $T_b$ fixes $v$.  We know from part (a) that there exists an operator $T \in \Gamma(\ell) / \Gamma(\ell^2)$ such that $T(v) = v + \ell a$ for any given vector $a \in (\mathcal{T} \smallsetminus \ell \mathcal{T}) / \ell^2 \mathcal{T}$.  Since $\Gamma(\ell) / \Gamma(\ell^2)$ is cyclically generated over $(\Gamma(\ell) \cap \ker(f)) / \Gamma(\ell^2)$, the operator $T T_b^m$ lies in $(\Gamma(\ell) \cap \ker(f)) / \Gamma(\ell^2)$; the second claim of part (b) follows from the fact that $(T  T_b^m)(v) = T(v) = v + \ell a$.
\end{proof}

\begin{proof}[Proof of Proposition \ref{containslT}]
We assume that $\mathcal{L} \not\subset \ell \mathcal{T}$ and proceed to show that $\mathcal{L}$ must contain $\ell \mathcal{T}$.  Choose a vector $v \in \mathcal{L} \smallsetminus \ell \mathcal{T}$.  Since we have $\Gamma(\ell) \cap \ker(f) \subset G$ by hypothesis, we may apply Lemma \ref{containslTlemma}(b) to get that $v + W \subset \mathcal{L}$ (resp. $v + \ell \mathcal{T} \subset \mathcal{L}$) if $v$ is (resp. is not) a scalar multiple of $e_4$, where $W \subset \mathcal{T}$ is as defined in the statement of the lemma.  Since $\mathcal{L}$ is closed under addition, we immediately get $\mathcal{L} \supset \ell W$ (resp. $\mathcal{L} \supset \ell \mathcal{T}$, in which case we are done).  Now suppose that we are in the former case; we assume without loss of generality that $v = e_4$.  Since $f$ is nontrivial on $G$, we may choose some element $y \in G \smallsetminus \ker(f)$.  As $G$ has the block-upper-triangular structure described above, we have that $y(e_4) - e_4 \in \ell \mathcal{T}$; the fact that $f(y) \neq 0$ then implies that we have $y(e_4) - e_4 \in \ell \mathcal{T} \smallsetminus W$.  Since $\mathcal{T}$ is generated over $W$ by any element in $\mathcal{T} \smallsetminus W$, we get the desired inclusion $\mathcal{L} \supset \ell \mathcal{T}$.
\end{proof}

Let $G$ be a group in $\Fix(\ell^2)$, and for $0 \leq i \leq 3$, define the sublattice 
\begin{align} \label{latticedef}
L_i := {\rm span}_{\Zl} \lbrace \ell e_1,\dots,\ell e_i, e_{i+1}, \dots, e_4  \rbrace \subset \mathcal{T}. 
\end{align}
The following proposition will be useful below for determining whether a given $G \in \Fix(\ell^2)$ is a counterexample or not.

\begin{prop} \label{latticeprop}

a) The sublattices $L_1$ and $L_3$ are always $G$-stable.  Moreover, the quotient $L_3 / \ell L_1$ is fixed under the induced $G$-action if and only if the homomorphism $f : G \to \Z / \ell$ is trivial.  Thus, if $G$ is a counterexample, then $f$ is nontrivial on $G$.

b) If we have $G \subset \B_{\ell}$, then $L_2$ is also a $G$-invariant sublattice.  Suppose further that we have $G \subset \Sy_{\ell}$.  Then the quotient $L_1 / L_3$ (resp. the quotients $L_0 / L_2$ and $L_2 / \ell L_0$) is fixed under the induced $G$-action if and only if the homomorphism $\gamma : G \to \Z / \ell$ (resp. $\alpha : G \to \Z / \ell$) is trivial.  Thus, if $G \subset \Sy_{\ell}$ is a counterexample, then both $\alpha$ and $\gamma$ are nontrivial on $G$.

c) In order to verify that $G$ is a counterexample, it suffices to verify that for any $G$-invariant sublattices $\mathcal{L}' \subset \mathcal{L} \subset \mathcal{T}$ both containing $\ell \mathcal{T}$ and with quotient of order $\ell^2$, the induced action of $G$ on the quotients $\mathcal{L} / \mathcal{L}'$ and $\mathcal{L}' / \ell \mathcal{L}$ is not trivial.

\end{prop}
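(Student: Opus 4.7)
The plan is to prove (a) and (b) by direct matrix computation in the basis fixed by Proposition \ref{prop_basic_shape}, and to derive (c) by invoking Proposition \ref{containslT} to reduce every $G$-stable pair with trivial $G$-action on the quotient to one of the form required in the statement.

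For (a), every $g \in G$ reduces modulo $\ell$ to the shape (\ref{basic_shape}), so $g_{1j} \in \ell\Z_\ell$ for $j = 2, 3, 4$ and $g_{44} \in 1 + \ell\Z_\ell$, which immediately gives $G$-stability of both $L_1$ and $L_3$. On the quotient $L_3/\ell L_1 \cong \F_\ell \cdot (\ell e_1) \oplus \F_\ell \cdot e_4$, a short computation gives $g(\ell e_1) \equiv \ell e_1$ and $g(e_4) \equiv e_4 + f(g) \cdot \ell e_1 \pmod{\ell L_1}$, using that $g_{14} \equiv \ell f(g) \pmod{\ell^2}$ by the definition of $f$. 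Triviality of this action is therefore equivalent to $f \equiv 0$ on $G$, which gives (a).

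For (b), when $G \subset \B_\ell$ the lower-triangular shape modulo $\ell$ forces $g_{13}, g_{23} \in \ell\Z_\ell$, whence $g(e_3) \in L_2$ and $L_2$ is $G$-stable. When further $G \subset \Sy_\ell$ the diagonal entries of $\pi_\ell(g)$ are all $1$, so only one off-diagonal entry contributes to each of the three quotients: I will find $e_2 \mapsto e_2 + \gamma(g) e_3$ on $L_1/L_3$, $e_1 \mapsto e_1 + \alpha(g) e_2$ on $L_0/L_2$, and $e_3 \mapsto e_3 - \alpha(g) e_4$ on $L_2/\ell L_0$ (the last using $\alpha'(g) = -\alpha(g)$ from (\ref{prime_simplified})), with the other natural basis vector fixed in each case. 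Triviality of each action is thereby equivalent to the vanishing of the named homomorphism on $G$.

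For (c), I argue the contrapositive. Suppose there exist $G$-stable $\mathcal{N}' \subset \mathcal{N}$ of index $\ell^2$ with trivial $G$-action on $\mathcal{N}/\mathcal{N}'$; by (a) we may assume $f$ is nontrivial on $G$, so Proposition \ref{containslT} applies to every $G$-stable lattice. After rescaling by a power of $\ell$, assume $\ell\mathcal{T} \subsetneq \mathcal{N} \subset \mathcal{T}$. Proposition \ref{containslT} applied to $\mathcal{N}'$ then gives $\mathcal{N}' \supset \ell\mathcal{T}$ or $\mathcal{N}' \subset \ell\mathcal{T}$. In the first case $(\mathcal{L},\mathcal{L}') := (\mathcal{N},\mathcal{N}')$ is immediately of the required form with trivial action on $\mathcal{L}/\mathcal{L}'$. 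In the second case the index equation forces $(\dim_{\F_\ell}(\mathcal{N}/\ell\mathcal{T}),\dim_{\F_\ell}(\ell^{-1}\mathcal{N}'/\ell\mathcal{T})) = (1,3)$; splitting on whether $\mathcal{N}/\ell\mathcal{T} \subset \ell^{-1}\mathcal{N}'/\ell\mathcal{T}$, in the containment subcase $(\mathcal{L},\mathcal{L}') := (\ell^{-1}\mathcal{N}',\mathcal{N})$ satisfies $[\mathcal{L}:\mathcal{L}'] = \ell^2$ and $\mathcal{L}'/\ell\mathcal{L} = \mathcal{N}/\mathcal{N}'$, while in the non-containment subcase $\ell^{-1}\mathcal{N}'/\ell\mathcal{T}$ and $\langle \overline{e}_2, \overline{e}_3, \overline{e}_4 \rangle$ are two distinct $3$-dimensional $G$-stable subspaces of $\mathcal{T}/\ell\mathcal{T}$ each with trivial $G$-quotient, so their $2$-dimensional intersection determines a $G$-stable $\mathcal{L}' \subset \mathcal{T}$ with $[\mathcal{T}:\mathcal{L}'] = \ell^2$ and trivial $G$-action on $\mathcal{T}/\mathcal{L}'$. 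The main obstacle will be this last configuration, where the pair $\mathcal{N}' \subset \mathcal{N}$ straddles two consecutive bands $\ell^k\mathcal{T}$; the key observation will be that the failure of the obvious lift forces the existence of a second $G$-stable $3$-dimensional subspace with trivial $G$-quotient, and Proposition \ref{containslT} is the essential input rigidifying the $G$-stable lattice structure enough that only finitely many configurations arise.
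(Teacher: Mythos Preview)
Your arguments for (a) and (b) are correct and essentially coincide with the paper's, which simply says these follow ``straightforwardly from the discussion and definitions in \S\ref{S2.1}.''

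For (c), your overall architecture is close to the paper's (both invoke Proposition \ref{containslT} after reducing to $f$ nontrivial), but there is a genuine gap in your subcase 2b. You assert that $\ell^{-1}\mathcal{N}'/\ell\mathcal{T}$ and $\langle \overline{e}_2,\overline{e}_3,\overline{e}_4\rangle = L_1/\ell\mathcal{T}$ are \emph{distinct} $3$-dimensional $G$-stable subspaces with trivial $G$-quotient, and then use that their intersection is $2$-dimensional. But nothing in the non-containment hypothesis $\mathcal{N}\not\subset\ell^{-1}\mathcal{N}'$ forces $\ell^{-1}\mathcal{N}'\neq L_1$. Indeed, one can have $\mathcal{N}'=\ell L_1$ (so $\ell^{-1}\mathcal{N}'=L_1$) together with $\mathcal{N}=\langle v,\ell\mathcal{T}\rangle$ for some $v$ with $\overline{v}\notin L_1/\ell\mathcal{T}$ and $g(v)-v\in\ell L_1$ for all $g\in G$; this is compatible with $f$ being nontrivial, since $f$ is a mod-$\ell^2$ condition on the $(1,4)$-entry and places no constraint on the first column modulo $\ell$. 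In that degenerate case your ``intersection'' is all of $L_1/\ell\mathcal{T}$ and the construction yields only an index-$\ell$ pair.

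The paper avoids this by splitting instead on the group structure of $\mathcal{M}/\mathcal{M}'$. In the cyclic case $\mathcal{M}/\mathcal{M}'\cong\Z/\ell^2$ (which covers all of your subcase 2b), it picks a generator $v\in\mathcal{M}\smallsetminus\ell\mathcal{T}$ and, using that $f$ nontrivial forces $\overline{v}\neq\overline{e}_4$, simply takes $\mathcal{L}=\langle\ell\mathcal{T},v,e_4\rangle$ and $\mathcal{L}'=\ell\mathcal{T}$: then $\overline{v}$ and $\overline{e}_4$ are linearly independent $G$-fixed vectors, so $[\mathcal{L}:\mathcal{L}']=\ell^2$ with trivial action on $\mathcal{L}/\mathcal{L}'$. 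This single construction handles both the case where your two hyperplanes happen to be distinct and the case where they coincide, and is what you should use to patch the gap.
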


\begin{proof}

The statements of parts (a) and (b) can be verified straightforwardly from the discussion and definitions in \S\ref{S2.1}.

We proceed to prove part (c).  Choose any group $G \in \Fix(\ell^2)$ such that there exist $G$-invariant lattices $\mathcal{M}' \subset \mathcal{M} \subset \mathcal{T}$ whose quotient $\mathcal{M} / \mathcal{M}'$ has order $\ell^2$ and is fixed under the induced action by $G$.  If we have $\mathcal{M} \subset \ell \mathcal{T}$, then we may replace $\mathcal{M}$ and $\mathcal{M}'$ with $\frac{1}{\ell}\mathcal{M}$ and $\frac{1}{\ell}\mathcal{M}'$ respectively without changing the induced action of $G$ on their quotient.  We therefore assume that $\mathcal{M} \not\subset \ell \mathcal{T}$, which by part (a) combined with Proposition \ref{containslT} implies that $\mathcal{M} \supsetneq \ell \mathcal{T}$.  If we also have $\mathcal{M}' \not\subset \ell \mathcal{T}$, then we similarly get $\mathcal{M}' \supsetneq \ell \mathcal{T}$.  In this case, we let $\mathcal{L} = \mathcal{M}$ and $\mathcal{L}' = \mathcal{M}'$, and we are done.

Now assume that $\mathcal{M}' \subset \ell \mathcal{T}$.  Then it follows from considering the order of the quotient $\mathcal{M} / \mathcal{M}'$ that we certainly have $\mathcal{M}' \not\subset \ell^2 \mathcal{T}$, implying that $\frac{1}{\ell} \mathcal{M}' \not\subset \ell \mathcal{T}$.  Now by part (a) combined with Proposition \ref{containslT}, this implies that $\frac{1}{\ell} \mathcal{M}' \supsetneq \ell \mathcal{T}$ and so we have the inclusions $\ell^2 \mathcal{T} \subsetneq \mathcal{M}' \subset \ell \mathcal{T}$.  Suppose that $\mathcal{M} / \mathcal{M}' \cong \Z / \ell^2$, so that there exists an element $v \in \mathcal{M} \smallsetminus \ell \mathcal{T}$ whose image modulo $\mathcal{M}'$ generates $\mathcal{M} / \mathcal{M}'$.  If $v \equiv e_4 \ (\mathrm{mod} \ \ell)$, then one verifies directly from the definition of $f$ that since $v$ is fixed modulo $\mathcal{M}'$ by $G$, the homomorphism $f$ must be trivial on $G$; then by part (a) we may take $\mathcal{L} = L_3$ and $\mathcal{L}' = \ell L_1$ and we are done.  If, on the other hand, we have $v \not\equiv e_4 \ (\mathrm{mod} \ \ell)$, we may take $\mathcal{L}$ to be generated by $\ell \mathcal{T}$ and the elements $v$ and  $e_4$; it is immediate to check that $G$ acts trivially on $\mathcal{L} / \mathcal{L}'$ and again we are done.  Now finally, suppose that $\mathcal{M} / \mathcal{M}' \cong \Z / \ell \oplus \Z / \ell$.  In this case, we clearly have $\mathcal{M} \subset \frac{1}{\ell}\mathcal{M}' \subset \mathcal{T}$ and can therefore take $\mathcal{L} = \mathcal{M}'$ and $\mathcal{L}' = \frac{1}{\ell}\mathcal{M}$, finishing the proof of part (c).
\end{proof}

The following lemma will be useful in both \S\ref{sylowsection} and \S\ref{iwahorisection}.

\begin{lem} \label{jeffsylowlemma}
Let $G$ be any subgroup of $\B_\ell$ such that the homomorphisms $\alpha$ and $\gamma$ are nontrivial on $G$.  Then the only proper $G$-stable sublattices of $\mathcal{T}$ which properly contain $\ell\mathcal{T}$ are $L_0$, $L_1$, $L_2$, $L_3$, where the sublattices $L_i$ are as in (\ref{latticedef}).  Moreover, if $G \in \Fix(\ell^2)$ and the homomorphism $f$ is also trivial on $G$, then $G$ is a counterexample.
\end{lem}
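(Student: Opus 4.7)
The plan is to reduce both claims to the study of $\bar{G}$-invariant subspaces of $\bar{V} := \mathcal{T}/\ell\mathcal{T}$. First I would note that the preimage construction gives a bijection between $G$-stable sublattices $\mathcal{L}$ with $\ell\mathcal{T} \subseteq \mathcal{L} \subseteq \mathcal{T}$ and $\bar{G}$-invariant subspaces of $\bar{V}$, in which $L_i$ corresponds to $\bar{L}_i := L_i/\ell\mathcal{T}$ (and $L_0 = \mathcal{T}$ to $\bar{V}$ itself). Under this bijection the first claim becomes the assertion that the only proper nonzero $\bar{G}$-invariant subspaces of $\bar{V}$ are $\bar{L}_1, \bar{L}_2, \bar{L}_3$. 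I would also observe that the cocycle identity $\alpha(gh) = \alpha(g) + \epsilon(g)\alpha(h)$ on $\B_\ell$ forces $\epsilon \equiv 1$ on $G$ whenever $\alpha|_G$ is both a homomorphism and nontrivial, so the stated hypothesis effectively places $G \subseteq \Sy_\ell$, which I assume henceforth.

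The classification proceeds by cases on the dimension $d$ of a hypothetical proper nonzero $\bar{G}$-invariant subspace $U$. For $d = 1$: write a generator $v = \sum_i a_i \bar{e}_i$ and let $j$ be the smallest index with $a_j \neq 0$; matching the $\bar{e}_j$-coefficient of $g(v) = \lambda_g v$ forces $\lambda_g$ to equal the $(j,j)$-diagonal entry of $g$, namely $1$ in $\Sy_\ell$, and matching subsequent coefficients using the matrix form of \S\ref{S2.1} together with (\ref{prime_simplified}) yields $\alpha(g) \equiv 0$ on $G$ (for $j \in \{1, 3\}$) or $\gamma(g) \equiv 0$ on $G$ (for $j = 2$), each contradicting nontriviality, so $U = \langle \bar{e}_4 \rangle = \bar{L}_3$. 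For $d = 3$: the symplectic orthogonal complement is $\bar{G}$-invariant and a direct computation gives $\bar{L}_3^{\perp} = \bar{L}_1$, so the only such $U$ is $\bar{L}_1$. For $d = 2$: inspect the $\bar{G}$-invariant subspace $U \cap \bar{L}_1$. If its dimension is $1$, then it equals $\bar{L}_3$ by the $d = 1$ analysis, and any $v \in U \setminus \bar{L}_1$ satisfies $g(v) - v \in \bar{L}_3$, whose $\bar{e}_2$-coefficient equals $\alpha(g)$ and must vanish, a contradiction; if instead $U \subseteq \bar{L}_1$, then by dimension $\dim(U \cap \bar{L}_2) \geq 1$, and either $U = \bar{L}_2$ or $U \cap \bar{L}_2 = \bar{L}_3$, in which latter case a generator $v = \bar{e}_2 + c\bar{e}_3 + d\bar{e}_4$ of $U$ modulo $\bar{L}_3$ yields $g(v) - v$ with $\bar{e}_3$-coefficient $\gamma(g)$ which must vanish, again contradicting nontriviality.

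For the second claim, I read ``$f$ is also trivial'' as ``$f$ is also \emph{nontrivial}'', since the stated direction would directly contradict the contrapositive of the last sentence of Proposition \ref{latticeprop}(a). Under this reading, Proposition \ref{latticeprop}(c) reduces counterexample verification to checking that for every $G$-stable pair $\mathcal{L}' \subsetneq \mathcal{L}$ both containing $\ell\mathcal{T}$ with $[\mathcal{L} : \mathcal{L}'] = \ell^2$, the induced $G$-actions on $\mathcal{L}/\mathcal{L}'$ and on $\mathcal{L}'/\ell\mathcal{L}$ are both nontrivial. By the first part, such pairs are exhausted by $(L_0, L_2)$, $(L_1, L_3)$, and $(L_2, \ell\mathcal{T})$. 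Combining Proposition \ref{latticeprop}(a) and (b) with the $G$-module isomorphism $\ell\mathcal{T}/\ell L_2 \cong L_0/L_2$ induced by multiplication by $\ell$, the quotients $L_0/L_2$, $L_2/\ell\mathcal{T}$, and $\ell\mathcal{T}/\ell L_2$ are $G$-trivial iff $\alpha$ vanishes; $L_1/L_3$ is $G$-trivial iff $\gamma$ vanishes; and $L_3/\ell L_1$ is $G$-trivial iff $f$ vanishes. Since $\alpha$, $\gamma$, and $f$ are all assumed nontrivial, every relevant quotient has nontrivial $G$-action and $G$ is a counterexample.

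The main obstacle I anticipate is the $d = 2$ case of the classification, where several intersection configurations with the standard flag must be ruled out by careful reduction to the $d = 1$ analysis within subquotients of $\bar{L}_1$. Everything else amounts to direct matrix computation in the formulas developed in \S\ref{S2.1}.
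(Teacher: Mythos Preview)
Your argument is correct, and you are right that ``trivial'' in the second sentence should read ``nontrivial'' (the lemma is invoked in the proof of Theorem~\ref{jointsylowthm}(b) exactly under the assumption that $\alpha,\gamma,f$ are all nontrivial, and the printed statement contradicts Proposition~\ref{latticeprop}(a)). Your route for the first claim differs from the paper's: rather than a case analysis on $\dim U$, the paper shows in one stroke that for each $i=0,1,2$ and each $v\in\bar L_i\smallsetminus\bar L_{i+1}$ the minimal $\bar G$-invariant subspace containing $v$ equals $\bar L_i$, by taking a suitable combination $c\,v-\pi_\ell(g).v\in\bar L_{i+1}$ with $g$ chosen outside $\ker\alpha$ or $\ker\gamma$. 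That argument uses the scalar $c=\epsilon(g)^{\pm 1}$ and hence runs in all of $\B_\ell$, whereas your cocycle observation that $\alpha$ being a nontrivial homomorphism forces $\epsilon\equiv 1$ pins you to $\Sy_\ell$. This matters downstream: the paper applies the first conclusion in the proof of Proposition~\ref{iwahoricounterexampleprop1} to groups with $\epsilon$ nontrivial, so ``the homomorphisms $\alpha$ and $\gamma$'' in the statement is evidently loose wording for ``the maps $\alpha$ and $\gamma$''. Your symplectic-orthogonal shortcut for $d=3$ is a nice simplification absent from the paper, and your quotient-by-quotient verification of the second claim unpacks what the paper compresses into the single line ``apply all three parts of Proposition~\ref{latticeprop}''.
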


\begin{proof}

For $i = 0, 1, 2, 3$, we write $\overline{e}_i \in \mathcal{T} / \ell\mathcal{T}$ for the reduction modulo $\ell$ of $e_i \in \mathcal{T}$ and $\overline{L}_i \subset \mathcal{T} / \ell\mathcal{T}$ for the $\overline{G}$-invariant subspace given by $L_i / \ell\mathcal{T}$, so that $\overline{L}_i = \langle \overline{e}_{i + 1}, ... , \overline{e}_4 \rangle$.  The first statement of the lemma is equivalent to saying that the only nontrivial $\overline{G}$-invariant subspaces of $\mathcal{T} / \ell\mathcal{T}$ are the $\overline{L}_i$'s.  We prove this by showing that for $i = 0, 1, 2$, given a vector $v \in \overline{L}_i \smallsetminus \overline{L}_{i + 1}$, the minimal $\overline{G}$-invariant subspace containing $v$ is $\overline{L}_i$.  We start by choosing $v \in \overline{L}_2 \smallsetminus \overline{L}_3$; on choosing some $g \in G \smallsetminus \ker(\alpha)$, we get that $\epsilon(g)^{-1}v - \pi_{\ell}(g).v \in \langle \overline{e}_4 \rangle = \overline{L}_3$.  Any $\overline{G}$-invariant subspace containing $v$ therefore contains the subspace generated by $\overline{L}_3$ and $v$, which coincides with $\overline{L}_2$.  We have thus proved that the minimal $\overline{G}$-invariant subspace containing $v$ is $\overline{L}_2$.  We now show that for $v \in \overline{L}_1 \smallsetminus \overline{L}_2$, the minimal $\overline{G}$-invariant subspace containing $v$ is $\overline{L}_1$, using a similar argument where this time we choose some $g \in G \smallsetminus \ker(\gamma)$ and take $\epsilon(g)v - \pi_{\ell}(g).v$.  Finally we show that for $v \in \overline{L}_0 \smallsetminus \overline{L}_1$, the minimal $\overline{G}$-invariant subspace containing $v$ coincides with $\overline{L}_0$ in the same way, this time choosing some $g \in G \smallsetminus \ker(\alpha)$ and taking $v - \pi_{\ell}(g).v$.

The second statement now follows easily by applying all three parts of Proposition \ref{latticeprop}.
\end{proof}

\section{The Sylow Subgroup $\Sy_\ell$} \label{sylowsection}

In this section we consider subgroups $G \subset \Sy_\ell$ that belong to $\Fix(\ell^2)$.  Throughout this section, by fixing an appropriate symplectic basis of our free rank-$4$ $\Z_\ell$-module $\mathcal{T}$, we identify $\Sy_\ell$ with the subgroup of $\Sp_4(\Z_\ell)$ whose reduction modulo $\ell$ consists of lower-triangular matrices with only $1$'s on the diagonal. 

We have two main results: that there are no counterexamples when $\ell \geq 3$, and that there exist counterexamples when $\ell=2$.

\begin{thm} \label{jointsylowthm}

Suppose $G \subset \Sy_\ell$ with $G \in \Fix(\ell^2)$.

a) If $\ell \geq 3$, then one of $\alpha$, $\gamma$, or $f$ is trivial on $G$, \emph{i.e.}~there are no counterexamples when $\ell \geq 3$ and $G \subset \Sy_\ell$.   \label{geq3sylow}

b) Let $G \subset \Sy_2$ be a counterexample.  Then we have either $\overline{G} = \overline{\Sy}_2 \cong C_2 \times D_4$ or $\overline{G} \cong D_4$.  In either case, for any $H \subset \Gamma(2) \cap \ker(f)$, the subgroup of $\Sp_4(\mathcal{T})$ generated by $G$ and $H$ is also a counterexample.  In particular, if $G$ is a maximal counterexample, then we have $\overline{G} = D_4 \times C_2$ and $G \cap \Gamma(2) = \Gamma(2) \cap \ker(f)$.

Moreover, there do exist counterexamples satisfying $\overline{G} = \overline{\Sy}_2$ and counterexamples satisfying $\overline{G} = D$ for any subgroup $D \subset \overline{\Sy}_2$ isomorphic to $D_4$. \label{jeffsylowprop}

\end{thm}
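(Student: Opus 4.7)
The plan hinges on the determinant formula (\ref{det_formula}) on $\Sy_\ell$, which combined with Proposition \ref{latticeprop} and Lemma \ref{jeffsylowlemma} reduces the question to: $G \subset \Sy_\ell$ is a counterexample if and only if $G \in \Fix(\ell^2)$ and each of the three homomorphisms $\alpha, \gamma, f$ is nontrivial on $G$. Setting $\phi := (\alpha, \gamma, f) \colon G \to \F_\ell^3$, this reduces the problem to classifying $\F_\ell$-subgroups $V \subseteq \F_\ell^3$ (to be $V = \phi(G)$) that lie in the variety $\{xyz = 0\}$ and project surjectively onto each coordinate.

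For part (a), the plan is a dimension count. A $1$-dimensional $V$ with all three projections nontrivial must be $V = \F_\ell \cdot (1,1,1)$, which violates $xyz = 0$. A $2$-dimensional $V$ not contained in any coordinate hyperplane would be the union of the three lines $V \cap \{x_i = 0\}$, each of size $\ell$ and meeting pairwise only at $0$; this gives $|V| \leq 3\ell - 2$, contradicting $|V| = \ell^2$ for $\ell \geq 3$. The $3$-dimensional case is trivially bad. Hence for $\ell \geq 3$ at least one of $\alpha, \gamma, f$ is trivial on $G$, proving part (a).

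For part (b), when $\ell = 2$ the unique $V$ satisfying both conditions is $V^* := \{(0,0,0), (1,1,0), (1,0,1), (0,1,1)\}$ (the annihilator of $(1,1,1)$), so every counterexample $G \subset \Sy_2$ has $\phi(G) = V^*$. Since $\alpha, \gamma$ vanish on $\Gamma(2)$ while $(0,0,1) \notin V^*$, we deduce $G \cap \Gamma(2) \subseteq \ker f$, which lets $f|_G$ descend to $\bar f \colon \overline{G} \to \F_2$ equal to $\bar\alpha + \bar\gamma$ and forces $(\bar\alpha, \bar\gamma) \colon \overline{G} \to \F_2^2$ to be surjective. The next step is to analyze subgroups $\overline{G} \subseteq \overline{\Sy}_2$ with this surjectivity property. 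Using the matrix parametrization of $\overline{\Sy}_2$ by $(\alpha, \beta, \gamma, \delta) \in \F_2^4$, a direct computation will show that for any $\bar g, \bar g' \in \overline{\Sy}_2$ with $(\bar\alpha, \bar\gamma)$-values $(1,0)$ and $(0,1)$, the $\beta$-coordinates of $\bar g \bar g'$ and $\bar g' \bar g$ differ by $1$; hence no abelian complement to $K := \ker(\bar\alpha, \bar\gamma)|_{\overline{\Sy}_2}$ exists, forcing $|\overline{G}| \geq 8$. An enumeration of order-$8$ subgroups of $\overline{\Sy}_2 \cong C_2 \times D_4$ will then show that those surjecting onto $\F_2^2$ are all isomorphic to $D_4$.

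For existence and maximality, the explicit subgroup $G_{\max} := \{g \in \Sy_2 : f(g) = \alpha(g) + \gamma(g)\}$ is an index-$2$ subgroup of $\Sy_2$ satisfying $\pi_2(G_{\max}) = \overline{\Sy}_2$, $G_{\max} \cap \Gamma(2) = \Gamma(2) \cap \ker f$, and $\phi(G_{\max}) = V^*$ by construction, providing a maximal counterexample. Smaller counterexamples realizing any chosen $D_4 \subset \overline{\Sy}_2$ as $\overline{G}$ are obtained by taking suitable lifts into $\Sy_2$ of generators of that $D_4$ with prescribed $f$-values. The enlargement claim is immediate since adding any $H \subseteq \Gamma(2) \cap \ker f$ leaves $\phi$ unchanged and hence preserves both $\Fix(4)$-membership and the nontriviality of $\alpha,\gamma,f$. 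Maximality then follows: a maximal $G$ must absorb $\Gamma(2) \cap \ker f$, and using the fact that $\bar f = \bar\alpha + \bar\gamma$ extends to a homomorphism on all of $\overline{\Sy}_2$, one can always enlarge $\overline{G}$ from $D_4$ to the full Sylow. I expect the main technical obstacle to be the commutator calculation in $\overline{\Sy}_2$ and the order-$8$ case analysis ruling out $(C_2)^3$ and $C_2 \times C_4$ as possibilities for $\overline{G}$.
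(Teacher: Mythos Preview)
Your proof is correct, and for part (a) it is genuinely more streamlined than the paper's. Both rest on the same equivalence (via Proposition~\ref{latticeprop} and Lemma~\ref{jeffsylowlemma}, a subgroup $G \subset \Sy_\ell$ in $\Fix(\ell^2)$ is a counterexample exactly when $\alpha,\gamma,f$ are all nontrivial), but the paper then argues case-by-case on $|\overline{G}| \in \{\ell^4,\ell^3,\ell^2,\ell\}$, invoking the classification of nonabelian order-$\ell^3$ subgroups (Lemma~\ref{jeff_lemma}) and commutator relations in $\overline{\Sy}_\ell$ to force one homomorphism to vanish in each case. Your reduction to the image $V=(\alpha,\gamma,f)(G)\subset\F_\ell^3$ bypasses all of this: once $V$ is an $\F_\ell$-subspace contained in the union of three coordinate hyperplanes, the bound $|V|\le 3\ell-2<\ell^2$ for $\ell\ge 3$ kills $\dim V\ge 2$ instantly, and $\dim V\le 1$ is trivial. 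This is both shorter and independent of the internal structure of $\overline{\Sy}_\ell$.

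For part (b) the two routes converge: both show that $(\bar\alpha,\bar\gamma)$-surjectivity forces $\overline{G}$ nonabelian (hence order $\ge 8$) and then identify the order-$8$ case as $D_4$. Your organizing principle via the unique $V^*=\{x+y+z=0\}$ is a nice addition, immediately yielding the constraint $f=\alpha+\gamma$ on any counterexample and making the maximal counterexample $G_{\max}=\{g\in\Sy_2: f(g)=\alpha(g)+\gamma(g)\}$ transparent, whereas the paper builds examples more by hand from chosen lifts. One small caution: your opening ``if and only if'' is slightly stronger than what Lemma~\ref{jeffsylowlemma} literally delivers, since the sufficiency direction (through Proposition~\ref{latticeprop}(c) and Proposition~\ref{containslT}) implicitly needs $G\supseteq\Gamma(\ell)\cap\ker f$; but this does not affect your argument, because the structural claims use only the ``only if'' direction, the enlargement claim follows from monotonicity of being a counterexample under passing to supergroups in $\Fix(\ell^2)$, and your explicit constructions already contain $\Gamma(2)\cap\ker f$.
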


Because of the difference in techniques of the two cases of Theorem \ref{jointsylowthm}, we separate our argument into multiple subsections.  We begin with a short description of the structure of $\overline{\Sy}_\ell \subset \Sp_4(\Z/\ell)$ that we will use extensively throughout this section.

\subsection{The structure of $\Sy_\ell$}  \label{structure_of_sy} Let $\Sy_\ell$ be the $\ell$-Sylow subgroup of $\B_\ell$  and $\overline{\Sy}_\ell$ the $\ell$-Sylow subgroup of $\overline{\B}_\ell$.  We define the following four elements of $\overline{\Sy}_\ell$ that we will use extensively in the rest of the paper:
$$
x_1= \left( \begin{smallmatrix} 1&0&0&0 \\ 1&1 &0&0 \\ 0&0&1&0 \\ 0&0&-1&1  \end{smallmatrix} \right)
\qquad
x_2 = \left( \begin{smallmatrix} 1&0&0&0 \\ 0&1 &0&0 \\ 0&1&1&0 \\ 0&0&0&1  \end{smallmatrix} \right)
\qquad
x_3 = \left( \begin{smallmatrix} 1&0&0&0 \\ 0&1 &0&0 \\ 1&0&1&0 \\ 0&1&0&1  \end{smallmatrix} \right)
\qquad
x_4 = \left( \begin{smallmatrix} 1&0&0&0 \\ 0&1 &0&0 \\ 0&0&1&0 \\ 1&0&0&1  \end{smallmatrix} \right).
$$
The group $\overline{\Sy}_\ell$ is nonabelian of order $\ell^4$ and the following facts are easily verified to hold for all $\ell$. It is straightforward to compute directly that the order-$\ell$ elements $x_2,x_3,x_4$ commute and so $\langle x_2,x_3,x_4 \rangle$ defines an elementary abelian subgroup of $\overline{\Sy}_\ell$ of order $\ell^3$.  We also have the commutation relations 
\begin{align*}
[x_1,x_2] &= x_3^{-1}x_4^{-1} \\
[x_1,x_3] &= x_4^{-2},
\end{align*}
which show that $x_1 \not \in  \langle x_2,x_3,x_4 \rangle$ because it does not commute with $x_2$ or $x_3$.  Therefore, $\langle x_1,x_2,x_3,x_4 \rangle = \overline{\Sy}_\ell$ since it must have maximal order $\ell^4$.  It is then a calculation to see that the element $x_4$ lies in the center of $\overline{\Sy}_\ell$.  By the commutation relations, the center of $\overline{\Sy}_\ell$ is $\langle x_4 \rangle$, while the center of $\overline{\Sy}_2$ is $\langle x_3 ,x_4 \rangle$.  We can simplify the generating set even more: when $\ell$ is odd we have $\overline{\Sy}_\ell = \langle x_1, x_2 \rangle$, while $\overline{\Sy}_2 = \langle x_1,x_2,x_4 \rangle$.  

For $\ell \geq 5$, $\overline{\Sy}_\ell$ has exponent $\ell$, while for $\ell \in \lbrace 2,3 \rbrace$ $\overline{\Sy}_\ell$ has exponent $\ell^2$ (this is a special case of a general fact about the Sylow subgroups of classical groups in defining characteristic \cite[Cor.~0.5]{testerman}).  In these two special cases the $\ell$-Sylow subgroups have isomorphism type $\overline{\Sy}_2 \simeq C_2 \times D_4$ and $\overline{\Sy}_3 \simeq C_3 \wr C_3$ (the wreath product of $C_3$ and $C_3$ with respect to a nontrivial permutation action).  We now seek an explicit description of the subgroups of $\overline{\Sy}_\ell$ of order $\ell^3$ which will be used in the proof of Proposition \ref{geq3sylow}.

To make the notation less cumbersome in the next lemma, we define the homomorphisms 
$$
\overline{\alpha}, \overline{\gamma}: \overline{\Sy}_\ell \to \Z/\ell
$$
 to be the ones induced by factoring $\alpha$ and $\gamma$ respectively through $\pi_{\ell}|_{\Sy_{\ell}} : \Sy_{\ell} \to \overline{\Sy}_{\ell}$; \emph{i.e.} given an element $g \in \overline{\Sy}_{\ell}$, the images $\overline{\alpha}(g)$ and $\overline{\gamma}(g)$ are its $(2,1)$-entry and its $(3,2)$-entry respectively.

\begin{lem} \label{jeff_lemma}
Let $\ell$ be an odd prime. There are exactly $\ell$ nonabelian subgroups of $\overline{\Sy}_\ell$ of order $\ell^3$, and these are given explicitly by
\begin{align} \label{extraspecial}
\langle x_1x_2^k, x_3 \rangle_{k=0,\dots,\ell-1}.
\end{align}
\end{lem}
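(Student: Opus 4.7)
The plan is to exploit the fact that every order-$\ell^3$ subgroup of the order-$\ell^4$ group $\overline{\Sy}_\ell$ has index $\ell$, hence is a maximal subgroup, and to invoke the standard correspondence between maximal subgroups of a finite $\ell$-group and hyperplanes in its Frattini quotient. Once the Frattini quotient is identified as $(\Z/\ell)^2$, there will be exactly $\ell+1$ maximal subgroups, and we need only sort them into abelian/nonabelian and recognize them in the stated form.

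First I would pin down the Frattini subgroup $\Phi(\overline{\Sy}_\ell)$. The excerpt already establishes $\overline{\Sy}_\ell = \langle x_1, x_2\rangle$ for odd $\ell$, and the group is nonabelian, so its minimum number of generators is exactly $2$; hence $\overline{\Sy}_\ell/\Phi(\overline{\Sy}_\ell) \cong (\Z/\ell)^2$ and $|\Phi(\overline{\Sy}_\ell)| = \ell^2$. On the other hand the commutation relations $[x_1,x_2]=x_3^{-1}x_4^{-1}$ and $[x_1,x_3]=x_4^{-2}$ show that $[\overline{\Sy}_\ell,\overline{\Sy}_\ell]$ contains $x_4^{-2}$ and $x_3^{-1}x_4^{-1}$, and since $-2$ is invertible mod $\ell$ for odd $\ell$, this commutator subgroup contains $\langle x_3,x_4\rangle$, an elementary abelian subgroup of order $\ell^2$. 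Counting orders, we conclude $\Phi(\overline{\Sy}_\ell) = [\overline{\Sy}_\ell,\overline{\Sy}_\ell] = \langle x_3,x_4\rangle$. This is the step I expect to be most delicate, because for $\ell = 3$ the group has exponent $9$, so one cannot simply assert $\Phi = [G,G]$ via exponent $\ell$; the two-generator argument sidesteps this.

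Next, since $\{x_1,x_2\}$ descends to an $\F_\ell$-basis of the Frattini quotient, the $\ell+1$ lines in $(\Z/\ell)^2$ lift to the $\ell+1$ maximal subgroups
\begin{align*}
\langle x_2, x_3, x_4\rangle \quad\text{and}\quad \langle x_1 x_2^k, x_3, x_4\rangle \ \ (k=0,1,\dots,\ell-1).
\end{align*}
The first is abelian because $x_2,x_3,x_4$ pairwise commute. For each of the remaining $\ell$ subgroups, using $[x_2,x_3]=1$ and the centrality of $x_4$, I compute $[x_1 x_2^k, x_3] = [x_1,x_3] = x_4^{-2}$, which is nontrivial for odd $\ell$; hence these are all nonabelian. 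This gives the correct count of $\ell$ nonabelian subgroups of order $\ell^3$.

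Finally I would verify that the listed generators $\langle x_1 x_2^k, x_3\rangle$ indeed produce these subgroups (rather than smaller ones). The commutator $[x_1 x_2^k, x_3] = x_4^{-2}$ lies in $\langle x_1 x_2^k, x_3\rangle$, and since $-2$ is a unit mod $\ell$ for odd $\ell$, this forces $x_4 \in \langle x_1 x_2^k, x_3\rangle$. Therefore $\langle x_1 x_2^k, x_3\rangle = \langle x_1 x_2^k, x_3, x_4\rangle$ for each $k$, matching the nonabelian maximal subgroups enumerated above, and the lemma follows.
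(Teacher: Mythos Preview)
Your proof is correct and takes a genuinely different route from the paper. The paper argues directly: it first checks that the listed groups are distinct, nonabelian, of order $\ell^3$, then takes an arbitrary nonabelian subgroup $H$ of order $\ell^3$ and, via a sequence of ad hoc observations (the center of $H$ must contain $x_4$; the intersection $H\cap\langle x_2,x_3,x_4\rangle$ has order $\ell^2$; a commutator computation forces $\langle x_3\rangle\subset H$), whittles $H$ down to one of the listed groups. Your argument is more structural: you identify $\Phi(\overline{\Sy}_\ell)=\langle x_3,x_4\rangle$ and invoke the Frattini correspondence to list all $\ell+1$ maximal subgroups at once, then simply check which one is abelian. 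This buys you a cleaner global count and avoids the case-by-case dissection of an unknown $H$; the paper's approach, by contrast, requires no knowledge of Frattini theory and stays entirely within elementary commutator manipulations. Your handling of the $\ell=3$ subtlety (where exponent $\ell$ fails) via the two-generator count is also a nice touch that the paper does not need to confront.
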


\begin{proof}
It is routine to verify that the groups $\langle x_1x_2^k, x_3 \rangle_{k=0,\dots,\ell-1}$ are distinct, nonabelian, and of order $\ell^3$.  Additionally observe that $\langle x_2,x_3,x_4 \rangle$ coincides with the kernel of $\overline{\alpha}$ and is the unique abelian subgroup of $\overline{\Sy}_\ell$ of order $\ell^3$.  Therefore, if $H \subset \overline{\Sy}_\ell$ is a nonabelian subgroup of order $\ell^3$, then $\overline{\alpha}(H) \ne 0$.

Because $H$ is an $\ell$-group, it has nontrivial center.  Let $h \in H$ be such that $\overline{\alpha}(h) \ne 0$.  Then $h$ only commutes with powers of $x_4$.  Thus $H$ contains $\langle x_4 \rangle$. Since $h$ and $x_4$ each have order $\ell$ and commute, we have $\langle h,x_4 \rangle \simeq C_\ell^2$, hence $H$ contains another element $g$ so that $H = \langle g,h,x_4 \rangle$.  Since $H$ is nonabelian we must have $H = \langle g,h \rangle$ since otherwise $\langle g,h \rangle \simeq C_\ell^2$ and then $H$ would be elementary abelian.

Next, observe that $H \cap \langle x_2,x_3,x_4 \rangle$ has order $\ell^2$.  Since $\langle x_4 \rangle \subset H$, it must be the case that $H$ contains a subgroup $K$ of $\langle x_2,x_3\rangle$ of order $\ell$, whence we can write  $K = \langle x_2^cx_3^b \rangle$ for some $b,c$.  If $c \ne 0$, then we claim that the group generated by $h$, $x_4$, and $K$ is all of $\overline{\Sy}_\ell$.  This claim can be verified by checking that the commutator $[h, x_2^c x_3^b]$ equals a nontrivial power of $x_3$ times a power of $x_4$, thus ensuring that the group generated by $h$, $x_4$, and $K$ contains $\ker(\overline{\alpha})$; since $h \notin \ker(\overline{\alpha})$, this group coincides with $\overline{\Sy}_\ell$.  So in fact we can take $c = 0$ and have $K = \langle x_3 \rangle$.  

Thus, $G$ contains the order-$\ell^2$ subgroup $\langle x_3,x_4 \rangle$, and also contains the element $h$.  By multiplying $h$ by suitable powers of $x_3$ and $x_4$, we can take $h$ to be $x_1^ax_2^c$ for some $a,c$.  By raising $x_1^ax_2^c$ to a suitable power and re-multiplying by suitable powers of $x_3$ and $x_4$, we can take $h$ to be of the form $x_1x_2^k$, for some $k\in \lbrace 0,\dots,\ell-1 \rbrace$, as claimed.  
\end{proof}

\subsection{The case $\ell \geq 3$}  We now show that there are no counterexamples $G \subset \Sy_\ell$ when $\ell \geq 3$ by proving that if $G \in \Fix(\ell^2)$ then one of the homomorphisms $\alpha$, $\gamma$, $f$ is trivial on $G$ and applying Proposition \ref{latticeprop}.

\begin{proof}[Proof of Theorem \ref{jointsylowthm}(a)]
Let $\ell$ be an odd prime and $G \subset \Sy_\ell$ with $G \in \Fix(\ell^2)$.  We argue case by case based on the order of $\overline{G}$.

\

\noindent \underline{$\overline{G}$ has order $\ell^4$.} In this case $\overline{G} = \overline{\Sy}_\ell = \langle x_1,x_2 \rangle$. For $i \in \lbrace 1,2 \rbrace$ let $g_i \in G$ be any element such that $g_i \equiv x_i \pmod{\ell}$.  Observe that $\alpha$ and $\gamma$ are each non-zero on $g_1g_2$ and on $g_1g_2^2$, whence $f(g_1g_2) = f(g_1g_2^2) =0$; it then follows easily that $f(g_1) = f(g_2) = 0$.  Because $g_1$ and $g_2$ were chosen arbitrarily, it follows that $f$ is trivial on all of $G$. \\

\noindent \underline{$\overline{G}$ has order $\ell^3$.} There are $\ell+1$ subgroups of $\overline{\Sy}_\ell$ of order $\ell^3$.  One of these subgroups is elementary abelian and the remaining $\ell$ are nonabelian by Lemma \ref{jeff_lemma}.  If $\overline{G}$ is elementary abelian, then $\overline{G} = \langle x_2,x_3,x_4 \rangle$ and so $\alpha(G)=0$.  

For the nonabelian groups we appeal to the classification of Lemma \ref{jeff_lemma}.  Fix an index $k \in  \lbrace 0,\dots \ell-1 \rbrace$ and suppose $G$ is such that 
$$
\overline{G} = \langle x_1x_2^k, x_3 \rangle.
$$
If $k=0$ then visibly $\gamma(G) =0$.  If $k \ne 0$, then let $\mathsf{x}$ and $\mathsf{y}$ be any elements of $G$ such that $\mathsf{x} \equiv x_1x_2^k \pmod{\ell}$ and $\mathsf{y} \equiv x_3 \pmod{\ell}$.  Then $\alpha$ and $\gamma$ are nontrivial on $\mathsf{x}$ and $\mathsf{xy}$, so $f(\mathsf{x}) = f(\mathsf{xy})=0$ and thus $f(\mathsf{y}) = 0$.  Since $\mathsf{x}$ and $\mathsf{y}$ were chosen arbitrarily, $f(G) = 0$.\\

\noindent \underline{$\overline{G}$ has order $\ell^2$ or $\ell$.}  Since $G \in \Fix(\ell^2)$, for every $g \in G$ one of $\alpha(g),\gamma(g)$, or $f(g)$ must be trivial.  If $\overline{G}$ is cyclic and $g \in G$ is such that $\pi_\ell(g)$ generates $\overline{G}$, then whichever of $\alpha, \gamma$, or $f$ is trivial on $g$ must also be trivial on $G$.  This takes care of every group $G$ for which $\overline{G}$ has order $\ell$, as well as the special case of cyclic subgroups of $\overline{G}_3$ of order 9.  We will now assume $\overline{G}$ is elementary abelian of order $\ell^2$.

Suppose $\overline{G}$ contains an element $g$ on which $\overline{\alpha}$ and $\overline{\gamma}$ are both nontrivial.  Because $\overline{G}$ is abelian, and such $g$ 
 only commute with powers of $x_4$, then $\overline{G} = \langle x_4,g \rangle$.  But then $\overline{G}$ is also generated by $\langle gx_4,g \rangle$, and $\overline{\alpha}$ and $\overline{\gamma}$ are both nontrivial on these elements.  By the same reasoning as in the previous cases this implies $f(G)=0$.
 
If $\overline{G}$ contains an element $g\in \ker \overline{\alpha}$ then $g$ will not commute with any element of $\overline{\Sy}_\ell$ on which $\overline{\alpha}$ is nontrivial.  Therefore $\overline{G} \subset \ker \overline{\alpha}$ and so $G \subset \ker \alpha$.

If $\overline{G}$ contains an element $g \in \ker \overline{\gamma}$, let $h \in \overline{G}$ lie outside $\langle g \rangle$ so that $\overline{G}$ is generated by $g$ and $h$.  If $h \in \ker \overline{\gamma}$ then we are done.  If not, then both $g$ and $h$ must belong to $\ker \overline{\alpha}$ or else $g$ and $h$ would not commute and so $G \subset \ker \alpha$.
\end{proof}

\subsection{The case $\ell=2$} In contrast to the previous section, there do exist counterexamples $G \subset \Sy_2$, as we now show.

\begin{proof}[Proof of Theorem \ref{jointsylowthm}(b)]

We begin by noting, from the discussion above, that $\overline{\Sy}_2 = \langle x_1, x_2, x_4 \rangle$ and that each pair of these generators commutes except that we have $x_1 x_2 x_1^{-1} x_2^{-1} = x_3 x_4$; moreover, each of these generators commutes with $x_3$.  It is then straightforward to check that $\overline{\Sy}_2$ decomposes as a direct product of $\langle x_4 \rangle \cong C_2$ with $\langle x_1, x_2 \rangle \cong D_4$.  From evaluating $\overline{\alpha}$ and $\overline{\gamma}$ on generators it is clear that the order-$8$ elementary abelian group $\langle x_2, x_3, x_4 \rangle$ (resp. $\langle x_1, x_3, x_4 \rangle$) is contained in the kernel of $\overline{\alpha}$ (resp. $\overline{\gamma}$); moreover on checking orders we see that these containments are equalities.  It follows that the center of $\overline{\Sy}_2$ coincides with $Z := \langle x_3, x_4 \rangle = \ker(\overline{\alpha}) \cap \ker(\overline{\gamma})$.

We first prove that if $G \subset \Sy_2$ is a counterexample then we must have $\overline{G} = \overline{\Sy}_2$ or $\overline{G} \cong D_4$.  To show this, we start by claiming that if $\overline{G}$ is abelian then $\overline{G}$ cannot be a counterexample.  Suppose that $\overline{G}$ is an abelian subgroup of $\overline{\Sy}_2$.  Since neither $\overline{\alpha}$ nor $\overline{\gamma}$ can be trivial on $G$, there must exist (not necessarily distinct) elements $w, y \in \overline{G}$ such that $w \notin \ker(\overline{\alpha})$ and $y \notin \ker(\overline{\gamma})$.  If $w \in \ker(\overline{\gamma}) \smallsetminus \ker(\overline{\alpha})$, then $w \equiv x_1$ (mod $Z$), and the relations given above imply that $w$ cannot commute with anything not lying in $\ker(\overline{\gamma})$; this contradiction implies that $w \notin \ker(\overline{\alpha}) \cup \ker(\overline{\gamma})$.  By an analogous argument, we also have $y \notin \ker(\overline{\alpha}) \cup \ker(\overline{\gamma})$, and indeed, any element $g \in \overline{G} \smallsetminus Z$ must satisfy $g \notin \ker(\overline{\alpha}) \cup \ker(\overline{\gamma})$, \emph{i.e.} $\alpha(\tilde{g}) = \gamma(\tilde{g}) = 1$ for any $\tilde{g} \in G$ with $\pi_2(\tilde{g}) = g$.  Now if we assume that $G$ is a counterexample, for any $\tilde{g} \in G$ we must have $-\alpha(\tilde{g})^2 \gamma(\tilde{g}) f(\tilde{g}) \equiv 0$ (mod $2$) by (\ref{det_formula}) and therefore $f(\tilde{g}) = 0$ for each $\tilde{g} \in \pi_2^{-1}(\overline{G} \smallsetminus Z)$.  Then since $\overline{G} \smallsetminus Z$ clearly generates $\overline{G}$, we get that $f$ is trivial on $G$, thus contradicting our assumption and proving our claim.

We now assume that $\overline{G}$ is a proper nonabelian subgroup of $\overline{\Sy}_2$ (and therefore of order $8$) and show that it is isomorphic to $D_4$.  Note that since both $\ker(\overline{\alpha})$ and $\ker(\overline{\gamma})$ are elementary abelian $2$-groups, any order-$4$ element of $\overline{G}$ must lie in $\overline{\Sy}_2 \smallsetminus (\ker(\overline{\alpha}) \cup \ker(\overline{\gamma}))$.  By considering the quotient $\overline{\Sy}_2 / Z$ using the generators and relations given above, we see that any element of $\overline{\Sy}_2 \smallsetminus (\ker(\overline{\alpha}) \cup \ker(\overline{\gamma}))$ must be equivalent modulo $Z$ to $x_1 x_2$.  It follows that any two such elements commute, and so $\overline{G}$ has the property that any two of its order-$4$ elements commute.  Since the only nonabelian group of order $8$ with that property is $D_4$, we get $\overline{G} \cong D_4$ as claimed.

Now for any counterexample $G \subset \Sy_2$, we claim that $\langle G, H \rangle \in \Fix(4)$ for any subgroup $H \subset \Gamma(2) \cap \ker(f)$.  This follows directly from the formula (\ref{det_formula}) and the fact that replacing any element $g \in G$ with its translation by an element in $H$ clearly does not change $\alpha(g)$, $\beta(g)$, or $f(g)$.  Since we have $\langle G, H \rangle \supset G$, the fact that $G$ satisfies the lattice condition in Question \ref{kq} automatically implies that $\langle G, H \rangle$ satisfies it as well, and so $\langle G, H \rangle$ is also a counterexample.

Now that we have shown that any counterexample $G$ satisfies that $\overline{G}$ contains a subgroup isomorphic to $D_4$, we set out to prove the converse: that a counterexample subgroup $G \subset \Sy_2$ can be constructed satisfying that $\overline{G} = \overline{\Sy}_2$ or that $\overline{G}$ coincides with any given subgroup of $\overline{\Sy}_2$ which is isomorphic to $D_4$.  We start by letting $D \subset \overline{\Sy}_2$ be any subgroup isomorphic to $D_4$, generated by an order-$4$ element $\mathsf{x}$ and an order-$2$ element $\mathsf{y} \neq \mathsf{x}^2$.  Now suppose that $G = \langle \tilde{\mathsf{x}}, \tilde{\mathsf{y}}, \Gamma(2) \cap \ker(f) \rangle$, where $\tilde{\mathsf{x}}$ and $\tilde{\mathsf{y}}$ are elements of $\Sy_2$ lying in the inverse images $\pi_2^{-1}(\mathsf{x})$ and $\pi_2^{-1}(\mathsf{y})$ respectively and satisfying $f(\tilde{\mathsf{x}}) = 0$ and $f(\tilde{\mathsf{y}}) = 1$.  Then by construction we have $\overline{G} = \langle \mathsf{x}, \mathsf{y} \rangle = D$.  We now show that every element of $G$ satisfies the determinant condition required for $G$ to lie in $\Fix(4)$, for which we make use of the formula (\ref{det_formula}).  First of all, if $g \in G$ lies in $\pi_2^{-1}(\langle \mathsf{x} \rangle)$, then we clearly have $f(g) = 0$ and so $\det(g - 1) \equiv 0$ (mod $4$).  Now choose $g \in G \smallsetminus \pi_2^{-1}(\langle \mathsf{x} \rangle)$, so that $\pi_2(g) \in D \cong D_4$ has order $2$.  If we assume that $g \in \Sy_2 \smallsetminus (\ker(\overline{\alpha}) \cup \ker(\overline{\gamma}))$, then it is easily verified, using the fact that the only nontrivial commutator in $\overline{\Sy}_2$ lies in $Z$, that $\pi_2(g) \equiv x_1 x_2$ (mod $Z$) and so $\pi_2(g)^2 = x_3 x_4 \neq 1$, contradicting the fact that $\pi_2(g)$ has order $2$.  We therefore have $\pi_2(g) \in \ker(\overline{\alpha}) \cup \ker(\overline{\gamma})$.  We then get $\det(g - 1) \equiv 0$ (mod $4$) from the fact that $\alpha(g) = 0$ or $\gamma(g) = 0$.  It follows that $G \in \Fix(4)$.

Suppose that we replace $G$ with $\langle G, \tilde{x}_4 \rangle$ for some element $\tilde{x}_4 \in \Sy_2$ satisfying $\pi_2(\tilde{x}_4) = x_4$ and $f(\tilde{x}_4) = 0$.  We know from the group structure of $\overline{\Sy}_2$ that it is a direct product of $\langle x_4 \rangle$ and any of its subgroups isomorphic to $D_4$; therefore, we have $\overline{G} = \overline{\Sy}_2$.  Now given any $g \in G \smallsetminus \langle \tilde{\mathsf{x}}, \tilde{\mathsf{y}}, \Gamma(2) \cap H \rangle$, we have $g = g' x_4$ for some $g' \in \langle \tilde{\mathsf{x}}, \tilde{\mathsf{y}}, \Gamma(2) \cap H \rangle$.  We have already shown that $\det(g' - 1) \cong 0$ (mod $4$); now it is clear that $\det(g - 1) \equiv 0$ (mod $4$) also, using (\ref{det_formula}) and the fact that the homomorphisms $\alpha$, $\beta$, and $f$ each take the same value on $g'$ and $g' x_4$.  Thus, again we have $G \in \Fix(4)$.

Now, using the fact that the maps $\alpha$, $\gamma$, and $f$ are each nontrivial on $G$ in any of the above cases, we apply Lemma \ref{jeffsylowlemma} to get that $G$ is a counterexample.  We have thus proven the existence of counterexamples $G$ with $\overline{G} = \overline{\Sy}_2$ or $\overline{G} \cong D_4$.
\end{proof}

\subsection{Serre's Counterexample}  Because the reference \cite{serretokatz} does not appear in the literature, and because it was the genesis of this paper, we give a brief description of Serre's original counterexample.  Let $\ell = 2$ and consider the subgroup $H$ of $\Sy_2$ consisting of all $g$ such that 
$$
\alpha(g) + \gamma(g) + f(g) = 0.
$$
This ensures that the product $\alpha^2\gamma f$ is zero on $H$ (when $\ell=2$ we have $\alpha = \alpha'$) and hence that $H \in \Fix(4)$ by (\ref{det_formula}).  Now consider the elements
$$
g_1 = \left(
\begin{smallmatrix}
1&0&0&0 \\
1&1&0&0\\
0&0&1&0\\
0&0&-1&1
\end{smallmatrix}
\right) \qquad
g_2 = \left(
\begin{smallmatrix}
1&0&0&0 \\
0&1&0&0\\
0&1&1&0\\
0&0&0&1
\end{smallmatrix}
\right) \qquad
g_3 = \left(
\begin{smallmatrix}
1&0&0&2 \\
0&1&0&0\\
0&0&1&0\\
0&0&0&1
\end{smallmatrix}
\right).
$$
and set $A = g_1g_2$, $B = g_1g_3$, and $C = g_2g_3$.  Then one of  $\alpha$, $\gamma$, or $f$  is nontrivial on each of $A$, $B$, and $C$ and, additionally, $A,B$, and $C$ each belong to $H$.  This makes $H \subset \Sp_4(\Z_2)$ a counterexample by Lemma \ref{latticeprop} and Proposition \ref{jeffsylowlemma}.  We now replace $H$ by $\pi_4^{-1} (\pi_4(H))$, so that the image modulo 4 is the same, but $H$ now contains $\Gamma(4)$.
This enlarged group is then open in $\GSp_4(\Z_2)$, belongs to $\Fix(4)$, and does not stabilize any additional lattices; it is therefore a counterexample.  Since there exists an abelian surface over $\Q$ with full $2$-adic image $\GSp_4(\Z_2)$, one can enlarge the field of definition to produce an abelian surface over a number field with the desired mod-4 image which produces a counterexample to Question \ref{lq}.

\section{The Iwahori Subgroup} \label{iwahorisection}

Throughout this section, by fixing an appropriate symplectic basis of our free rank-$4$ $\Z_\ell$-module $\mathcal{T}$, we identify $\B_\ell$ with the subgroup of $\Sp_4(\Z_\ell)$ whose reduction modulo $\ell$ is the full subgroup of lower-triangular matrices.  Note that $\Sy_\ell \subset \B_\ell$.  If $G \subset \B_\ell$ also belongs to $\Fix(\ell^2)$, then the elements of $G$ can be explicitly described in terms of the maps $\alpha,\beta,\gamma,\delta,\epsilon$ as outlined in \S\ref{S2.1}.  The main result of this section is the following theorem, which classifies the counterexamples $G \subset \B_\ell$. Because $\B_2 = \Sy_2$, in this section we only consider primes $\ell \geq 3$. 

\begin{thm} \label{iwahori_prop}
Suppose that $\ell \geq 3$ and that $G \in \Fix(\ell^2)$ is a counterexample such that $G \subset \B_{\ell}$ but $G \not\subset \Sy_{\ell}$.  Then $G$ satisfies the following:

\begin{enumerate}
\item[(i)] $\alpha(G \cap \Sy_\ell) = 0$ and $\overline{(G \cap \Sy_{\ell})}$ has order $\ell$ or $\ell^2$; or 
\item[(ii)] $\gamma(G\cap \Sy_\ell) = 0$ and $\overline{(G \cap \Sy_{\ell})}$ has order $\ell$.
\end{enumerate}
In either case, for any $H \subset \Gamma(\ell)$, the subgroup of $\Sp_4(\Z / \ell^2)$ generated by $G$ and $H$ is also a counterexample.  

In particular, if $G$ is a maximal counterexample, then $\Gamma(\ell) \subset G$ and $\overline{G}$ has isomorphism type $\Z/\ell \times (\Z/\ell \rtimes (\Z/\ell)^\times)$ or  $\Z/\ell \rtimes (\Z/\ell)^\times$, depending on whether $\alpha(G \cap \Sy_\ell)= 0$ or $\gamma(G\cap \Sy_\ell) = 0$, respectively. 

Moreover, there do exist counterexamples satisfying (i) and counterexamples satisfying (ii).

\end{thm}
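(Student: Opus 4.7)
The plan is to adapt the Sylow analysis of Theorem \ref{jointsylowthm}(a) by treating $G \cap \Sy_\ell$ as the ``unipotent part'' of $G$ and extracting additional constraints from the toral piece. First, I show that $f$ is nontrivial on $G \cap \Sy_\ell$: the quotient $G/(G \cap \Sy_\ell)$ embeds via $\epsilon$ into $(\Z/\ell)^\times$, whose order is coprime to $\ell$, and $f : \Pa_\ell \to \Z/\ell$ is a homomorphism; so if $f$ vanished on $G \cap \Sy_\ell$ it would factor through $\epsilon(G)$ and be identically zero, contradicting Proposition \ref{latticeprop}(a) which requires $f$ nontrivial on any counterexample. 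Applying Theorem \ref{jointsylowthm}(a) to $G \cap \Sy_\ell \subset \Sy_\ell$ (a subgroup inheriting $\Fix(\ell^2)$ from $G$), one of $\alpha$ or $\gamma$ must therefore vanish on $G \cap \Sy_\ell$, producing cases (i) and (ii).

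For the size bound on $\overline{G \cap \Sy_\ell}$, I would pick some $t \in G$ with $u := \epsilon(t) \neq 1$. Since $\Sy_\ell \triangleleft \B_\ell$, the subgroup $\overline{G \cap \Sy_\ell}$ is $t$-invariant, and a direct matrix calculation gives the action on the generators of $\overline{\Sy}_\ell$: $x_1 \mapsto x_1^u$, $x_2 \mapsto x_2^{u^{-2}}$, $x_3 \mapsto x_3^{u^{-1}}$, $x_4 \mapsto x_4$. In case (i) this forces $\overline{G \cap \Sy_\ell}$ to be a $t$-invariant subgroup of the elementary abelian $\ker \overline{\alpha} = \langle x_2, x_3, x_4\rangle$; in case (ii), a $t$-invariant subgroup of the non-abelian $\ker \overline{\gamma} = \langle x_1, x_3\rangle$ whose order-$\ell^2$ subgroups all contain the central $\langle x_4\rangle$. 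The larger candidates are ruled out by evaluating (\ref{iwahori_det}) on products of the form $th$ with $h \in G \cap \Sy_\ell$ satisfying $\delta(h) \neq 0$: the bracket $\delta(th)(1-u)^2/u$ is then nonzero modulo $\ell$, forcing $f$ to vanish on every lift of $th$, which combined with the homomorphism property of $f$ and the presence of $\Gamma(\ell)$-translates produces an incompatibility with $G \in \Fix(\ell^2)$.

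Persistence under enlarging by $H \subset \Gamma(\ell)$ is then immediate: elements of $\Gamma(\ell)$ have trivial mod-$\ell$ image, so $\alpha, \beta, \gamma, \delta, \epsilon$ on $\langle G, H \rangle$ agree with those on $G$, the identity (\ref{iwahori_det}) is preserved, and Lemma \ref{jeffsylowlemma} applies identically. Maximality then forces $\Gamma(\ell) \subset G$ and $\overline{G}$ to be the largest possible extension of $\overline{G \cap \Sy_\ell}$ by the full torus $(\Z/\ell)^\times$ under the action above, yielding $\Z/\ell \times (\Z/\ell \rtimes (\Z/\ell)^\times)$ in case (i) (from the central $\langle x_4 \rangle$ and $\langle x_2\rangle$ or $\langle x_3\rangle$ being acted upon) and $\Z/\ell \rtimes (\Z/\ell)^\times$ in case (ii) (from $\langle x_1\rangle$ acted on by $u$). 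Existence of counterexamples in each case will be established by constructing explicit groups $G$ with these specifications, verifying $\Fix(\ell^2)$ via (\ref{iwahori_det}), and checking the counterexample property via a direct enumeration of stable sublattices afforded by Lemma \ref{jeffsylowlemma}. The main obstacle is the size-bound step, specifically pinning down the asymmetry between the two cases --- order up to $\ell^2$ in (i) versus only $\ell$ in (ii) --- which traces back to the abelian versus non-abelian structure of $\ker \overline{\alpha}$ versus $\ker \overline{\gamma}$ and the interaction of the $\delta$-bracket obstruction with the center $\langle x_4\rangle$.
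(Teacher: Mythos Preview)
Your overall strategy matches the paper's: reduce to $G\cap\Sy_\ell$, invoke Theorem~\ref{jointsylowthm}(a), and bound $\overline{G\cap\Sy_\ell}$ by a determinant obstruction on products $t\cdot s$ with $\epsilon(t)\neq 1$. But the size-bound step, which you yourself flag as the main obstacle, has real gaps.

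First, the bracket in (\ref{iwahori_det}) applied to $th$ does \emph{not} reduce to $\delta(th)(1-u)^2/u$. The paper's direct computation (the expression $(\star)$) gives
\[
\frac{\alpha(t)^2\gamma(s) + (1-\epsilon(t))^2\delta(s) + 2\alpha(t)(1-\epsilon(t))\beta(s)}{\epsilon(t)},
\]
involving all three coordinates $(\beta(s),\gamma(s),\delta(s))$ and the parameters $\alpha(t),\epsilon(t)$; you cannot drop the $\alpha(t)$-terms without further argument. Second, your appeal to ``$\Gamma(\ell)$-translates'' is circular: at this stage $G$ need not contain $\Gamma(\ell)$, so you cannot vary $f$ freely on a fixed coset. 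The paper's mechanism is different: for $s$ outside the codimension-1 hyperplane $(\star)=0$ one is forced to have $f(s)=-f(t)$, but then (since $\ell$ is odd and $\ker\overline\alpha$ is elementary abelian) $s^2$ also lies outside, giving $2f(s)=-f(t)$, a contradiction. This is what rules out order $\ell^3$ in case~(i). Third, the asymmetry between (i) and (ii) is not ``abelian versus non-abelian'': in case~(ii) the analogue of $(\star)$ is \emph{quadratic} in $\alpha(s)$, and the paper applies the condition to both $s$ and $s^2$ and subtracts to extract two independent linear constraints, forcing the solution set down to dimension $\leq 1$.

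Finally, your invocation of Lemma~\ref{jeffsylowlemma} for the counterexample verification is incomplete: that lemma requires both $\alpha$ and $\gamma$ nontrivial on $G$, but in the maximal examples one of them may vanish on all of $G$ (e.g.\ $\alpha(G)=0$). The paper handles this by exhibiting the extra stable lattices $\widetilde L_1,\widetilde L_2,\widetilde L_3$ by hand and checking that the nontriviality of $\epsilon$ kills all order-$\ell^2$ quotients.
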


Observe that if $G \in \Fix(\ell^2)$ then $G \cap \Sy_\ell \in \Fix(\ell^2)$ as well.  By our work in \S\ref{sylowsection}, one of $\alpha$, $\gamma$, or $f$ must be trivial on $G \cap \Sy_\ell$.  Starting with $f$, we will consider the effect on $G$ of $\alpha$, $\gamma$, or $f$ being trivial on $G \cap \Sy_\ell$. 

\begin{lem} \label{f3kerlem}
Suppose $G \in \Fix(\ell^2)$, $G \subset \B_\ell$, and $f(G \cap \Sy_\ell)=0$.  Then $f(G)=0$ and therefore, by Proposition \ref{latticeprop}, $G$ is not a counterexample.
\end{lem}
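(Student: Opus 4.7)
The plan is to exploit the homomorphism properties of $f$ and $\epsilon$ together with a coprimality argument between $\ell$ and $\ell - 1$.

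First, recall from \S\ref{S2.1} that both $f : \Pa_\ell \to \Z/\ell$ and $\epsilon : \B_\ell \to (\Z/\ell)^\times$ are homomorphisms, and that $\Sy_\ell = \ker(\epsilon)$. Consequently $G \cap \Sy_\ell = \ker(\epsilon|_G)$ is a normal subgroup of $G$, and the first isomorphism theorem identifies $G/(G \cap \Sy_\ell)$ with a subgroup of $(\Z/\ell)^\times$, so its order divides $\ell - 1$.

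Next, I would use the hypothesis $f(G \cap \Sy_\ell) = 0$: since $f$ is a homomorphism that vanishes on $G \cap \Sy_\ell$, it factors through the quotient $G/(G \cap \Sy_\ell)$, yielding a homomorphism from a group of order dividing $\ell - 1$ into $\Z/\ell$. Because $\gcd(\ell, \ell - 1) = 1$, the only such homomorphism is the trivial one, and hence $f(G) = 0$.

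Finally, the last sentence of the lemma is immediate from Proposition \ref{latticeprop}(a), which states that if $G$ is a counterexample then $f$ must be nontrivial on $G$. The main (and really only) nontrivial point is the coprimality step, which is essentially free once one observes that $\Sy_\ell$ is precisely the kernel of $\epsilon$; there is no substantial obstacle here, as the result follows from pure group theory within $\B_\ell$ without needing to analyze specific subgroup structures or invoke the determinant condition beyond what is already packaged into the definition of $f$.
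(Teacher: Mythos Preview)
Your proof is correct and follows essentially the same approach as the paper: both argue that $f$ factors through the quotient $G/(G\cap\Sy_\ell)$, which has order coprime to $\ell$ (since it embeds in $(\Z/\ell)^\times$ via $\epsilon$), forcing $f$ to be trivial. Your version is slightly more explicit about why $G\cap\Sy_\ell$ is normal and why the quotient has order dividing $\ell-1$, but the core idea is identical.
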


\begin{proof}
The fact that $G \cap \Sy_\ell \vartriangleleft G$ and the hypothesis $f(G \cap \Sy_\ell) = 0$ together imply $f$ induces a homomorphism $G/G \cap \Sy_\ell \to \Z/\ell$.  But elements in $G/G \cap \Sy_\ell$ have order coprime to $\ell$, whence such a homomorphism is trivial and so is $f$.  
\end{proof}

In contrast to Lemma \ref{f3kerlem}, we do get counterexamples when $\alpha(G \cap \Sy_\ell)=0$ and when $\gamma(G \cap \Sy_\ell)=0$, as claimed in Theorem \ref{iwahori_prop}.  Our first step in each classification is to show that if $G$ is a maximal counterexample, then $\overline{(G \cap \Sy_\ell)}$ has order $\ell^2$ when $\alpha(G \cap \Sy_\ell)=0$ and order $\ell$ when 
$\gamma(G \cap \Sy_\ell) = 0$.

We start with a computation that will be used in both cases.  In order for $G$ to be a counterexample, $f: G \to \Z/\ell$ must be nontrivial and, therefore, surjective.  If $g \in G$ and $f(g) \ne 0$, our determinant formula (\ref{iwahori_det}) directly implies 
\begin{align} \label{detcond}
\gamma(g)\alpha(g)^2 + 
\frac{\beta(g) \alpha(g)(1-\epsilon(g)^2)}{\epsilon(g)}  +
 \frac{\delta(g)(1-\epsilon(g))^2}{\epsilon(g)} \equiv 0 \pmod{\ell}.
\end{align}

\begin{rmk}
Even though we will not need it for the work that follows, one can prove that if the mod-$\ell$ images of the entries of $g$ satisfy (\ref{detcond}), and if $\epsilon(g) \in (\Z/\ell)^\times$ has order $m$, then $g^m \equiv 1 \pmod{\ell}$.
\end{rmk}

Now we consider the effect of $\alpha$ and $\gamma$ being trivial on $G \cap \Sy_\ell$.  If either $\alpha(G\cap \Sy_\ell) = 0$ or $\gamma(G \cap \Sy_\ell)=0$, then $\overline{(G \cap \Sy_\ell)}$ cannot be the full $\ell$-Sylow subgroup of $\Sp_4(\Z/\ell)$.  We will now show, among other things, that $\overline{(G \cap \Sy_\ell)}$ cannot have order $\ell^3$ either.  To do this, we will argue separately for $\alpha$ versus $\gamma$.  Because neither $\alpha$ nor $\gamma$ extends to a homomorphism of $G$, our arguments will be different from those for Lemma \ref{f3kerlem}.

\begin{lem} \label{f0sylowprop}
Suppose $G \subset \B_\ell$ lies in $\Fix(\ell^2)$ and that $f|_{G}$ is nontrivial. Suppose further that $\alpha(G \cap \Sy_\ell) = 0$.  Then $\overline{(G \cap \Sy_\ell)}$ has order dividing $\ell^2$.
\end{lem}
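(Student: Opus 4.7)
The plan is to exhibit a single nontrivial $\F_\ell$-linear relation satisfied by every element of $\overline{(G \cap \Sy_\ell)}$, viewed as a subgroup of $\F_\ell^3$; this will force $|\overline{(G \cap \Sy_\ell)}| \le \ell^2$. Throughout I use the hypothesis $G \not\subset \Sy_\ell$, which is available because the lemma is invoked in the Iwahori context of Theorem \ref{iwahori_prop}.

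The hypothesis $\alpha(G \cap \Sy_\ell) = 0$, together with the relations $\alpha' = -\alpha$ and $\beta' = \beta$ from (\ref{prime_simplified}), implies that every reduction of an element of $G \cap \Sy_\ell$ is determined by $(b,c,d) := (\beta(h), \gamma(h), \delta(h))$, so $\overline{(G \cap \Sy_\ell)}$ embeds into the abelian subgroup $\ker(\overline{\alpha}) \cong \F_\ell^3$ of $\overline{\Sy}_\ell$. Applied contrapositively, Lemma \ref{f3kerlem} says the hypothesis $f|_G \ne 0$ forces $f|_{G \cap \Sy_\ell}$ to be surjective onto $\F_\ell$. Since $G \not\subset \Sy_\ell$ there exists $g_0 \in G$ with $\epsilon(g_0) \ne 1$; multiplying $g_0$ by an appropriate element of $G \cap \Sy_\ell$ produces $g \in G$ with both $\epsilon(g) \ne 1$ and $f(g) \ne 0$.

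The key calculation is to apply the determinant condition (\ref{detcond}) to products $gh$ for $h \in G \cap \Sy_\ell$. Using $\alpha(h) = 0$ and the relation $\alpha'(g) = -\alpha(g)/\epsilon(g)$, a direct matrix multiplication modulo $\ell$ yields
\[
\alpha(gh) = \alpha(g),\ \epsilon(gh) = \epsilon(g),\ \beta(gh) = \beta(g) + b/\epsilon(g),\ \gamma(gh) = \gamma(g) + c/\epsilon(g),\ \delta(gh) = \delta(g) + d - \alpha(g)b/\epsilon(g).
\]
Whenever $f(gh) \ne 0$, the element $gh$ must itself satisfy (\ref{detcond}); subtracting (\ref{detcond}) applied to $g$, clearing the $\epsilon(g)$ denominators, and using the identity $(1 - \epsilon^2) - (1-\epsilon)^2 = 2\epsilon(1-\epsilon)$ yields
\[
c\,\alpha(g)^2 + 2b\,\alpha(g)(1 - \epsilon(g)) + d\,(1 - \epsilon(g))^2 \equiv 0 \pmod{\ell}.
\]
Because $\epsilon(g) \ne 1$, the coefficient of $d$ is nonzero, so this is a nontrivial $\F_\ell$-linear constraint on $(b,c,d)$.

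The main obstacle is to arrange that this constraint applies for every $(b,c,d) \in \overline{(G \cap \Sy_\ell)}$, not merely those whose chosen lift $h$ happens to make $f(gh) \ne 0$. The crucial observation is that for any $h_t \in G \cap \Sy_\ell$, the replacement $g' := g h_t$ again satisfies $\alpha(g') = \alpha(g)$ and $\epsilon(g') = \epsilon(g)$, so the derived linear relation is unaffected. If $f|_{G \cap \Gamma(\ell)}$ is nontrivial, then for any fixed lift $h$ of $(b,c,d)$, adjusting $h$ by elements of $G \cap \Gamma(\ell)$ realizes every value of $f$ on that coset, and we can pick $h$ with $f(gh) \ne 0$. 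Otherwise $f|_{G \cap \Sy_\ell}$ descends to a well-defined homomorphism $\overline{f}\colon \overline{(G \cap \Sy_\ell)} \to \F_\ell$, nontrivial by the surjectivity established above; selecting $h_t$ with $\overline{f}(h_t) \ne 0$ gives $f(g' h_0) - f(gh_0) = \overline{f}(h_t) \ne 0$, so for any lift $h_0$ of $(b,c,d)$ at least one of $gh_0$ or $g'h_0$ has nonzero $f$-value. In every case the linear relation above holds for the coordinates $(b,c,d)$, so $\overline{(G \cap \Sy_\ell)}$ is contained in a hyperplane of $\F_\ell^3$ and $|\overline{(G \cap \Sy_\ell)}| \le \ell^2$.
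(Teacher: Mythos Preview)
Your argument and the paper's are essentially the same: both show that the coordinates $(b,c,d)=(\beta(h),\gamma(h),\delta(h))$ of any $h\in G\cap\Sy_\ell$ must satisfy the single linear relation
\[
c\,\alpha(g)^2 + 2b\,\alpha(g)(1-\epsilon(g)) + d\,(1-\epsilon(g))^2 \equiv 0,
\]
which is nondegenerate because $\epsilon(g)\ne 1$. The paper obtains this by directly computing $\det(gs-1)\equiv(\star)\,(f(g)+f(s))\,\ell$ and then arguing by contradiction from the assumption of order $\ell^3$ (via the trick $f(s)=f(s^2)=-f(g)$); you instead apply (\ref{detcond}) separately to $g$ and to $gh$ and subtract, then argue directly that the constraint holds for every element of $\overline{(G\cap\Sy_\ell)}$. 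These are cosmetic differences in packaging the same idea.

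There is one small gap in your second case. When $f|_{G\cap\Gamma(\ell)}=0$ and you pass to $g'=gh_t$, your subtraction step for the pair $(g',g'h_0)$ tacitly uses (\ref{detcond}) for $g'$ itself, and that requires $f(g')\ne 0$. Your hypothesis $\overline f(h_t)\ne 0$ does not rule out $f(g')=f(g)+\overline f(h_t)=0$. The fix is immediate: since $\overline f$ is surjective and $\ell\ge 3$, choose $h_t$ with $\overline f(h_t)\notin\{0,-f(g)\}$; then both $f(g')\ne 0$ and $f(g'h_0)=\overline f(h_t)\ne 0$ whenever $f(gh_0)=0$, and the rest of your argument goes through unchanged.
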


\begin{proof}
Recall that $\ker \overline{\alpha} = \langle x_2,x_3,x_4 \rangle$ is the unique elementary abelian subgroup of $\overline{(G \cap \Sy_\ell)}$ of order $\ell^3$.  
Fix $g \in G \smallsetminus G \cap \Sy_\ell$ and suppose $\det(g-1) \equiv 0\pmod{\ell^2}$, so that either $f(g) =0$ or (\ref{detcond}) holds.

Let $s \in G \cap \Sy_\ell$. Then direct computation in coordinates reveals that
$$
\det(gs - 1) \equiv (\star) (f(g) + f(s)) \ell \pmod{\ell},
$$
where the expression $(\star)$ is given by
$$
\star = \frac{\alpha(g)^2 \gamma(s) + (1-\epsilon(g))^2 \delta(s) + 2\alpha(g) (1-\epsilon(g))\beta(s)}{\epsilon(g)}.
$$
Thus, for every $s \in G \cap \Sy_\ell$ we must have either 
\begin{align} \label{detcond2}
\alpha(g)^2 \gamma(s) + (1-\epsilon(g))^2 \delta(s) + 2\alpha(g) (1-\epsilon(g))\beta(s) \equiv 0 \pmod{\ell}
\end{align}
or $f(s) + f(g) \equiv 0\pmod{\ell}$.

For fixed $g$,  we claim that it is not the case that every $s \in \ker \alpha$ satisfies (\ref{detcond2}) or $f(s) + f(g) \equiv 0\pmod{\ell}$.  To see this, note that the subset 
$$
\lbrace ({\beta}(s),{\gamma}(s),{\delta}(s)) \rbrace_{s \in \ker \alpha \cap \Sy_\ell}   \subset (\Z/\ell)^3
$$
defines a 3-dimensional $\Fl$-vector space $\mathsf{E}$.  Indeed, it is easy to verify from the discussion in \S\ref{structure_of_sy} that the maps $\overline{\beta}$, $\overline{\gamma}$,  $\overline{\delta}: \overline{\Sy}_\ell \to \Z/\ell$ are homomorphisms and form a dual basis $\lbrace \overline{\beta}, \overline{\gamma}, \overline{\delta} \rbrace$ to the basis $\{x_2, x_3, x_4\}$ of the 3-dimensional $\Fl$-space $\ker  \overline{\alpha} \cap \overline{\Sy}_\ell$. 
Since $g$ is fixed and $\epsilon(g) \ne 1$, the congruence (\ref{detcond2}) defines a codimension-1 subspace $\mathsf{V}$ of $\mathsf{E}$.

Then every $s$ such that $ (\beta(s),\gamma(s),{\delta}(s)) $ lies outside $\mathsf{V}$
must have $f(s)  =-f(g)$.  If $f(g)=0$, then $f(s)=0$ for all $s \not \in \pi_\ell^{-1}(G \cap \Sy_\ell)$.  This implies $f(G) =0$ because the complement of $\pi_\ell^{-1}(G \cap \Sy_\ell)$ generates $G$, and contradicts the hypothesis $f|_{G}$ is nontrivial.  If $f(g) \ne 0$, then we have 
$$
f(s) = f(s^2) = -f(g),
$$
which is impossible since $f(s^2) = 2f(s)$. If follows that $\overline{(G \cap \Sy_\ell)}$ cannot have order $\ell^3$ and so must have order dividing $\ell^2$. 
\end{proof}

Lemma \ref{f0sylowprop} constrains the order of $\overline{(G \cap \Sy_\ell)}$ to be at most $\ell^2$.  We now show that counterexamples exist when the order equals $\ell^2$.  While it is possible that counterexamples may exist when the order of $\overline{(G \cap \Sy_\ell)}$ equals $\ell$, they would come from subgroups of the order-$\ell^2$ counterexamples.   Because of this, it will satisfy us to describe only the maximal counterexamples.

\begin{rmk}
In the extreme case where $\overline{(G \cap \Sy_\ell)}$ is trivial, then $G$ cannot be a counterexample, since $\overline{G}$ is then cyclic (if a generator fixes an order-$\ell^2$ submodule, then the entire group will fix the same).
\end{rmk}

\begin{prop} \label{iwahoricounterexampleprop1}
Fix an element $g \in \B_\ell \smallsetminus \Sy_\ell$ satisfying $\det(g-1)\equiv0\pmod{\ell^2}$.  Let $\mathcal{S}$ be the subgroup of $\ker \alpha$ satisfying (\ref{detcond2}) relative to the coordinates of $g$.  Then the subgroup $G$ of $\B_\ell$ generated by the element $g$ and the subgroups $\mathcal{S}$ and $\Gamma(\ell)$ is a counterexample.
\end{prop}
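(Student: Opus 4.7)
The plan is two-part: verify (I) $G \in \Fix(\ell^2)$, and (II) $G$ stabilizes no pair of lattices of relative index $\ell^2$ with trivial induced $G$-action on the quotient.

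For (I), since $\Gamma(\ell) \subset G$ and $f$ restricted to $\Gamma(\ell)$ surjects onto $\Z/\ell$, formula (\ref{iwahori_det}) reduces the $\Fix(\ell^2)$ condition on $G$ to the vanishing modulo $\ell$ of
\[
C(h) := \gamma(h)\alpha(h)^2 + \beta(h)\alpha(h)(1-\epsilon(h)^2)/\epsilon(h) + \delta(h)(1-\epsilon(h))^2/\epsilon(h)
\]
on every $\overline{h} \in \overline{G} := \langle \overline{g}, \overline{\mathcal{S}}\rangle$. The key computational ingredient is the identity
\[
C(\overline{h}\,\overline{u}) = C(\overline{h}) + L_{\overline{h}}(\overline{u})/\epsilon(\overline{h}),
\]
with $L_{\overline{h}}(\overline{u}) := \alpha(\overline{h})^2\gamma(\overline{u}) + 2\alpha(\overline{h})(1-\epsilon(\overline{h}))\beta(\overline{u}) + (1-\epsilon(\overline{h}))^2\delta(\overline{u})$, valid for $\overline{h} \in \overline{\B}_\ell$ with $\epsilon(\overline{h}) \neq 1$ and $\overline{u} \in \ker\overline{\alpha} \cap \overline{\Sy}_\ell$, obtained from the cocycle relations for $\alpha,\beta,\gamma,\delta$ on products in $\overline{\B}_\ell$. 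Specializing to $\overline{h} = \overline{g}$ recovers the defining equation (\ref{detcond2}) of $\mathcal{S}$, so $C(\overline{g}\,\overline{s}) = 0$ for every $\overline{s} \in \overline{\mathcal{S}}$.

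Combining this identity with conjugation invariance of the determinant---the element $\overline{s}\,\overline{g} = \overline{g}(\overline{g}^{-1}\overline{s}\,\overline{g})$ is $\overline{g}$-conjugate to $\overline{g}\,\overline{s}$ and so lies in $\Fix(\ell^2)$---we deduce $L_{\overline{g}}(\overline{g}^{-1}\overline{s}\,\overline{g}) = 0$, so $\overline{g}$ normalizes $\overline{\mathcal{S}}$; consequently $\overline{G} = \langle\overline{g}\rangle \cdot \overline{\mathcal{S}}$, and every element of $\overline{G}$ has the form $\overline{g}^n \overline{s}$ with $\overline{s} \in \overline{\mathcal{S}}$. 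Using $\alpha(\overline{g}^n) = \alpha(\overline{g})(\epsilon(\overline{g})^n - 1)/(\epsilon(\overline{g}) - 1)$, a direct algebraic calculation shows $L_{\overline{g}^n} = \bigl((\epsilon(\overline{g})^n - 1)/(\epsilon(\overline{g}) - 1)\bigr)^2 L_{\overline{g}}$, so $L_{\overline{g}^n}$ vanishes on $\overline{\mathcal{S}}$, giving $C(\overline{g}^n \overline{s}) = C(\overline{g}^n)$ for $\overline{s} \in \overline{\mathcal{S}}$. Finally, $C(\overline{g}^n) \equiv 0 \pmod \ell$: by Remark \ref{fix} the element $\overline{g}^n$ fixes any submodule that $\overline{g}$ fixes, so $\det(g^n - 1) \equiv 0 \pmod{\ell^2}$, and the freedom of $f(g^n \gamma) = f(g^n) + f(\gamma)$ over $\gamma\in\Gamma(\ell) \subset G$ then forces $C(\overline{g}^n) \equiv 0$.

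For (II), by Proposition \ref{latticeprop}(c) it suffices to verify that no pair of $\overline{G}$-stable subspaces $\overline{\mathcal{L}}'\subset\overline{\mathcal{L}}\subseteq\mathcal{T}/\ell\mathcal{T}$ with $\dim_{\F_\ell}(\overline{\mathcal{L}}/\overline{\mathcal{L}}') = 2$ admits trivial $\overline{G}$-action. The standard flag $\overline{L}_0 \supset \overline{L}_1 \supset \overline{L}_2 \supset \overline{L}_3$ provided by Proposition \ref{prop_basic_shape} is $\overline{G}$-stable, and in the generic case $\alpha(\overline{g}) \ne 0$, Lemma \ref{jeffsylowlemma} identifies these as the only nontrivial $\overline{G}$-stable subspaces (noting that $\gamma$ is necessarily nontrivial on $\overline{\mathcal{S}}$ by a dimension count inside $\ker\overline{\alpha}\cap\overline{\Sy}_\ell$). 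The induced action of $\overline{g}$ on each of $\overline{L}_0/\overline{L}_2$, $\overline{L}_1/\overline{L}_3$, $\overline{L}_2/\ell\overline{L}_0$ carries a diagonal entry equal to $\epsilon(\overline{g})$ or $1/\epsilon(\overline{g})$, both non-identity since $\epsilon(\overline{g}) \neq 1$, so none of these three pairs admits trivial $G$-action.

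The principal obstacle is Part (I), specifically the coherent propagation of the defining condition (\ref{detcond2}) from $\overline{\mathcal{S}}$ through the whole of $\overline{G}$; this hinges on the scalar proportionality $L_{\overline{g}^n} \propto L_{\overline{g}}$, the normality of $\overline{\mathcal{S}}$ in $\overline{G}$, and the vanishing $C(\overline{g}^n) \equiv 0$ derived from the $f$-freedom within $\Gamma(\ell)$.
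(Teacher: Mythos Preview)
Your approach tracks the paper's closely---both reduce $\Fix(\ell^2)$ to vanishing of $C$ on $\overline{G}$, exploit the scaling $L_{\overline{g}^n} = \big((1-\epsilon^n)/(1-\epsilon)\big)^2 L_{\overline{g}}$, and invoke Lemma~\ref{jeffsylowlemma} for the lattice analysis. Your explicit normality argument for $\overline{\mathcal{S}}$ (via conjugation invariance of the determinant) is a nice addition; the paper relies on the coset decomposition $G/(G\cap\Sy_\ell)$ without checking that $\overline{G\cap\Sy_\ell}=\overline{\mathcal{S}}$.

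There are, however, two gaps.  First, your derivation of $C(\overline{g}^n)=0$ is circular as written: knowing $\det(g^n-1)\equiv 0\pmod{\ell^2}$ gives only $C(\overline{g}^n)f(g^n)=0$, and the ``freedom of $f(g^n\gamma)$ over $\gamma\in\Gamma(\ell)$'' does not help, since you have not yet shown $\det(g^n\gamma-1)\equiv 0$---that is precisely what you are trying to prove.  A correct route: once $C(\overline{g})=0$ (which is an implicit hypothesis, equivalent to (\ref{detcond}) and automatic when $f(g)\neq 0$), replace $g$ by $g'=g\gamma$ with $f(g')=1$; then $\det(g'-1)\equiv 0$, so $(g')^n$ fixes the same order-$\ell^2$ submodule, and $f((g')^n)=n$ is nonzero for $1\le n<\operatorname{ord}(\epsilon(g))\mid\ell-1$, yielding $C(\overline{g}^n)=0$.

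Second, Part~(II) treats only the ``generic case $\alpha(\overline{g})\neq 0$''.  When $\alpha(\overline{g})=0$ (so $\alpha|_G=0$), Lemma~\ref{jeffsylowlemma} does not apply and two additional $G$-stable lattices $\widetilde{L}_1=\langle\ell\mathcal{T},e_3\rangle$ and $\widetilde{L}_2=\langle\ell\mathcal{T},e_1,e_3,e_4\rangle$ appear; the paper handles this case explicitly and checks that $\epsilon\neq 1$ still kills all order-$\ell^2$ quotients.  (Your dimension-count claim that $\gamma$ is nontrivial on $\overline{\mathcal{S}}$ is correct, but the reason is not dimension alone---two $2$-planes in a $3$-space can coincide---rather that the $\delta$-coefficient $(1-\epsilon(g))^2$ of $L_{\overline{g}}$ is nonzero.)
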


\begin{proof}
By hypothesis $\det(g-1) \equiv 0\pmod{\ell^2}$, and $\det(gs-1) \equiv 0\pmod{\ell^2}$ for all $s \in G \cap \Sy_\ell$ because of (\ref{detcond2}).  Therefore, the coset $g(G \cap \Sy_\ell)$ consists entirely of elements $\sigma$ satisfying $\det(\sigma-1) \equiv 0 \pmod{\ell^2}$.  We claim that this is enough to conclude that $G\in \Fix(\ell^2)$.  To see this we use the fact that $G/(G \cap \Sy_\ell)$ is cyclic, generated by $g(G \cap \Sy_\ell)$ and, for fixed $n \geq 1$, evaluate the expression ($\star$) of Lemma \ref{f0sylowprop} on elements $g^ns$:
$$
(\star)': \frac{\alpha(g^n)^2 \gamma(s) + (1-\epsilon(g^n))^2 \delta(s) + 2\alpha(g^n) (1-\epsilon(g^n))\beta(s)}{\epsilon(g^n)}.
$$
We have $\epsilon(g^n) = \epsilon(g)^n$ because $\epsilon: G \to (\Z/\ell)^\times$ is a homomorphism.   It is easy to show that 
\begin{align*}
\alpha(g^n) = \frac{1-\epsilon(g)^n}{1-\epsilon(g)} \alpha(g).
\end{align*}
Then applying the expressions for $\alpha(g^n)$ and $\epsilon(g^n)$ to $(\star)'$ and using (\ref{detcond2}), algebraic manipulation reveals that ($\star)' = 0$.  Therefore, every coset $g^n(G \cap \Sy_\ell)$ consists of $\sigma$ with $\det(\sigma - 1) \equiv 0\pmod{\ell^2}$ and so $G \in \Fix(\ell^2)$.

To see that $G$ is a counterexample, we apply Propositions \ref{latticeprop} and \ref{jeffsylowlemma}.  Our assumption that $\Gamma(\ell) \subset G$ means that $f|_{G}$ is nontrivial, satisfying Proposition \ref{latticeprop}(a).  If $\alpha|_G$ and $\gamma|_G$ are nonzero, then Proposition \ref{jeffsylowlemma} shows that the only proper $G$-stable lattices we need to check for quotients with trivial $G$-action are $L_0$, $L_1$,$ L_2$, and $L_3$.  But any pair of these with relative index $\ell^2$ visibly has nontrivial $G$-action due to the nontriviality of $\epsilon$.  There is one exceptional case to check by hand.

If $\alpha(G) = 0$, then $\delta(s) =0$ for all $s \in G \cap \Sy_\ell$ by (\ref{detcond2}). If, in addition,  $\gamma(G) \ne 0$, then an argument in the vein of the proof of Proposition \ref{latticeprop} shows that the only new $G$-stable lattices to include among $L_0$, $L_1$, $L_2$, and $L_3$ are:
\begin{itemize}
\item the lattice $\widetilde{L}_1$ generated by $\ell \mathcal{T}$ and $e_3$, and
\item the lattice $\widetilde{L}_2$ generated by $\ell \mathcal{T}$ and $e_1$, $e_3$, and $e_4$.
\end{itemize}
The nontriviality of $\epsilon$ again shows that the action on any quotient of order $\ell^2$ is nontrivial.

If $\gamma(G) =0$, (in particular $\gamma(s) =0$  for all $s \in G \cap \Sy_\ell$), then (\ref{detcond2}) imposes an additional linear condition on the entries of $\overline{G \cap \Sy_\ell}$ and so $G \cap \Sy_\ell$ has order dividing $\ell$.  Since we only classify maximal counterexamples in this proposition, we can safely omit this case.  

This completes the classification of maximal counterexamples and finishes the proof.

\end{proof}

We can produce counterexamples that are as large as possible within the constraints of Proposition \ref{iwahoricounterexampleprop1}, as the following example shows.

\begin{example} Suppose $\alpha(g) =0$ so that ($\star$)' simplifies to 
$$
(1-\epsilon(g)^n)^2\delta(s).
$$
for all $s$. In particular, $(1-\epsilon(g)^n)^2\delta(s)$ must equal 0 for all powers of $n$, including those for which $\epsilon(g)^n \ne 1$, whence $\delta(s) = 0$ for all $s$.  The maximal subgroup satisfying all of these conditions is then seen to be the preimage in $\Sp_4(\Zl)$ of the group
$$
\left(\begin{smallmatrix}
1&0&0&0 \\ 0 & \epsilon & 0 & 0 \\ \beta & \gamma & 1/\epsilon & 0 \\0 & \beta \epsilon & 0 & 1 
\end{smallmatrix}
\right) \subset \Sp_4(\Z/\ell).
$$
where $\beta,\gamma \in \Z/\ell$ and $\epsilon \in (\Z/\ell)^\times$. 
\end{example}

Finally we consider the case where $\gamma(G \cap \Sy_\ell) = 0$.  Similar to when $\alpha(G \cap \Sy_\ell)=0$, we will show that if $G$ is a counterexample, then $\overline{(G \cap \Sy_\ell)}$ cannot have order $\ell^3$; in fact, we will show that $\overline{(G \cap \Sy_\ell)}$ must have order $\ell$.  

\begin{lem} \label{f1sylowprop}
Suppose $G \subset \B_\ell$ lies in $\Fix(\ell^2)$ and that $f|_{G}$ is nontrivial. Suppose further that $\gamma(G \cap \Sy_\ell) = 0$.  Then $\overline{(G \cap \Sy_\ell)}$ has order dividing $\ell$.
\end{lem}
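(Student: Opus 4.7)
The plan is to adapt the strategy of Lemma \ref{f0sylowprop}. Since $f|_G$ is nontrivial and every $s \in G \cap \Sy_\ell$ can be written as $h^{-1}(hs)$ with $h, hs \in G \setminus (G \cap \Sy_\ell)$, I would first exhibit some $g \in G \setminus (G \cap \Sy_\ell)$ with $f(g) \not\equiv 0 \pmod \ell$. The hypothesis $\det(g-1) \equiv 0 \pmod{\ell^2}$ will then force the bracketed expression in (\ref{iwahori_det}) evaluated at $g$ to vanish mod $\ell$. Next, a direct computation using the multiplication rules in $\B_\ell$ (with $\gamma(s) = 0$) should yield
\[
\det(gs - 1) \equiv \bigl(L(s) + Q(s)\bigr)\bigl(f(g) + f(s)\bigr)\,\ell \pmod{\ell^2}
\]
for $s \in G \cap \Sy_\ell$, where $L(s)$ is linear in $\alpha(s), \beta(s), \delta(s)$ with $\delta(s)$-coefficient $(1-\epsilon(g))^2/\epsilon(g) \neq 0$, and $Q(s) = \gamma(g)\alpha(s)^2 + (1-\epsilon(g)^2)/\epsilon(g)\cdot \alpha(s)\beta(s)$ is quadratic.

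Since $(s-I)^2 = 0$ inside the subgroup $\ker \overline{\gamma} \cap \overline{\Sy}_\ell$, I would use $s^k = I + k(s-I)$ to rewrite $\det(gs^k - 1) \equiv 0 \pmod{\ell^2}$ as a polynomial identity $(L(s) + kQ(s))(f(g) + kf(s)) \equiv 0 \pmod \ell$ in $k$. For $\ell \geq 5$, this polynomial of degree at most $2$ vanishes at $\ell-1 \geq 4$ points, which forces all coefficients to be zero; combined with $f(g) \not\equiv 0$, this gives $L(s) \equiv Q(s) \equiv 0 \pmod \ell$ for every $s \in G \cap \Sy_\ell$. I would then split into cases: if $\alpha$ is trivial on $\overline{G \cap \Sy_\ell}$, then $Q$ vanishes automatically and $L = 0$ forces $\delta(s)$ to be a specific scalar multiple of $\beta(s)$, giving $|\overline{G \cap \Sy_\ell}| \leq \ell$. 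Otherwise, choosing $s_0$ with $\alpha(s_0) \neq 0$, the relation $Q(s_0) = 0$ (using $\epsilon(g) \neq -1$) determines $\beta(s_0)$ from $\alpha(s_0)$; then applying the same relation to $s_0 s$ for $s \in \overline{G \cap \Sy_\ell} \cap \ker\alpha$ must force $\beta(s) = 0$, whence $L(s) = 0$ gives $\delta(s) = 0$. Thus $\overline{G \cap \Sy_\ell} \cap \ker\alpha$ is trivial and $|\overline{G \cap \Sy_\ell}| = \ell$.

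The main obstacles I anticipate are the degenerate case $\epsilon(g) = -1$ (where the $\alpha\beta$-coefficient of $Q$ vanishes) and the small prime $\ell = 3$. For the former, I would use the element $g^2 \in G \cap \Sy_\ell$ (whose parameters are $(0,0,0,2\delta(g))$) to conclude $\delta(g) = 0$ via $L(g^2) = 0$; then varying over $g' = gs'$ for $s' \in G \cap \Sy_\ell$ and subtracting the resulting versions of $L(s) = 0$ should produce the relation $\alpha(s')\beta(s) = \beta(s')\alpha(s)$ for all $s, s'$, cutting the $(\alpha,\beta)$-image of $\overline{G \cap \Sy_\ell}$ down to a $1$-dimensional subspace and again yielding $|\overline{G \cap \Sy_\ell}| \leq \ell$. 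For $\ell = 3$, the substitutions $k = 1, 2$ only yield $L(s) \equiv Q(s) \equiv 0$ on $T := \ker f|_{G \cap \Sy_\ell}$ and $L(s) \equiv \pm Q(s)$ for $s \notin T$; I would combine these with the extraspecial structure of $\ker\overline{\gamma} \cap \overline{\Sy}_\ell$ (of order $\ell^3$) to upgrade the intermediate bound $|\overline{G \cap \Sy_\ell}| \leq \ell^2$ to the desired $|\overline{G \cap \Sy_\ell}| \leq \ell$.
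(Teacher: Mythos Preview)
Your approach is essentially the same as the paper's: both compute $\det(gs-1)\equiv(\star\star)(f(g)+f(s))\ell\pmod{\ell^2}$ and then compare the resulting constraint at $s$ and at $s^2$ (you package this as the polynomial identity in $k$ via $s^k=I+k(s-I)$). The paper's version is slightly cleaner because it simply subtracts twice the relation at $s$ from the relation at $s^2$, which already yields $Q(s)=0$ uniformly for all odd $\ell$ without needing $f(g)\neq 0$ or a separate $\ell=3$ analysis; your polynomial-in-$k$ argument only forces all coefficients to vanish when $\ell\geq 5$, which is why you are left with ad hoc patching at $\ell=3$.

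Two small points worth correcting. First, the coefficient of $\alpha(s)\beta(s)$ in the quadratic part is $(1/\epsilon(g)-1)$, not $(1-\epsilon(g)^2)/\epsilon(g)$; since $\epsilon(g)\neq 1$, this is always nonzero, so the ``degenerate case $\epsilon(g)=-1$'' never arises and that entire branch of your outline is unnecessary. Second, your claim that $g^2$ has parameters $(0,0,0,2\delta(g))$ when $\epsilon(g)=-1$ is not correct in general (the $(3,1)$-entry of $g^2$ picks up a $\gamma(g)\alpha(g)$ term), so that workaround would need repair anyway. With the correct coefficient the argument collapses to exactly the paper's: from $Q(s)=\alpha(s)\bigl(\gamma(g)\alpha(s)+(1/\epsilon(g)-1)\beta(s)\bigr)=0$ one splits on whether $\alpha(s)=0$, and in each case a single additional linear relation forces the image of $(\alpha,\beta,\delta)$ to be at most one-dimensional.
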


\begin{proof}
The proof strategy is nearly identical to that of Lemma \ref{f0sylowprop}.  Fix $g \in G \smallsetminus G \cap \Sy_\ell$ with $\det(g-1) \equiv 0\pmod{\ell}$.  For all $s \in G \cap \Sy_\ell$ we must have $\det(gs-1) \equiv 0\pmod{\ell}$ and a direct calculation reveals that 
\begin{align*}
\det(gs-1) &\equiv \left( {\Huge{\mathbf{\star \star}}} \right)(f(g) + f(s))\ell,
\end{align*}
where the expression $(\mathbf{\star} \star)$ is given by 
\begin{align*}
&\gamma(g) \alpha(s)^2 + 
2(\beta(g) (1-\epsilon(g) ) + \alpha(g)  \gamma(g) ) \alpha(s) + \\
& 2\alpha(g) (1/\epsilon(g)  - 1) \beta(s) + 
(1/\epsilon(g)  - 1)\alpha(s) \beta(s) +  
(\epsilon(g)  + 1/\epsilon(g)  -2)\delta(s).
\end{align*}
Therefore, for every $s \in G \cap \Sy_\ell$ it must be the case that either 
\begin{align} \label{f1detcond}
&\gamma(g) \alpha(s)^2 + 
2(\beta(g) (1-\epsilon(g) ) + \alpha(g)  \gamma(g) ) \alpha(s) + \\
& 2\alpha(g) (1/\epsilon(g)  - 1) \beta(s) + 
(1/\epsilon(g)  - 1)\alpha(s) \beta(s) +  
(\epsilon(g)  + 1/\epsilon(g)  -2)\delta(s)  =0 \nonumber
\end{align}
or $f(s) + f(g) =0$.   

Not every triple $({\alpha}(s),{\beta}(s),{\delta}(s)) \in (\Z/\ell)^3$ satisfies (\ref{f1detcond}) (for example, $(0,0,1)$ does not), and those that do not need not satisfy $f(s) + f(g)=0$ by the same reasoning in the proof of Lemma \ref{f0sylowprop}.  Therefore, the group $\overline{(G \cap \Sy_\ell)}$ cannot have order $\ell^3$, and hence has order dividing $\ell^2$.  We will now show that the order must in fact divide $\ell$.

While the group $\ker \overline{\gamma}$ is not elementary abelian, any subgroup of order dividing $\ell^2$ is.  We will show that the set 
$$
\lbrace ({\alpha}(s),{\beta}(s),{\delta}(s)) \in \Fl^3\rbrace
$$
taken over all $s \in G \cap \Sy_\ell$ which satisfy (\ref{f1detcond}) does not contain a 2-dimensional linear space.  Suppose it did.  Let $s \in G \cap \Sy_\ell$ so that $s^2 \in G \cap \Sy_\ell$ as well.   Apply the condition  (\ref{f1detcond}) to $s^2$ and subtract twice the relation  (\ref{f1detcond}) applied to $s$ to obtain the new condition
\begin{align} \label{new1}
\alpha(s) \left(\gamma(g) \alpha(s) +(1/\epsilon(g) - 1) \beta(s) \right) =0.
\end{align}
If $\alpha(s)=0$ then substituting into (\ref{f1detcond}) shows 
\begin{align} \label{new2}
2\alpha(g)(1/\epsilon(g) - 1) \beta(s) + (\epsilon(g) + 1/\epsilon(g) -2)\delta(s)  =0
\end{align}
whence the linear space is at most 1-dimensional.

On the other hand if $\gamma(g) \alpha(s) +(1/\epsilon(g) - 1) \beta(s) =0$, then substituting into (\ref{f1detcond}) additionally shows that 
\begin{align} \label{new3}
2\beta(g)(1-\epsilon(g))\alpha(s) + (\epsilon(g) + 1/\epsilon(g) -2)\delta(s)  =0
\end{align}
as well, whence the linear space is at most 1 dimensional.

In all cases, we see that $\overline{G \cap S}$ has order dividing $\ell$, which completes the proof of the lemma.
\end{proof}

We now show that there exist counterexamples $G \subset \B_\ell$ where $\overline{(G \cap \Sy_\ell)}$ has order $\ell$.

\begin{prop} \label{iwahoricounterexampleprop2}
Fix an element $g \in \B_\ell \smallsetminus \Sy_\ell$ with $\det (g-1) \equiv 0\pmod{\ell^2}$.  Let $\mathscr{S}$ be any subgroup of $\ker \gamma$ satisfying (\ref{f1detcond}) such that $\overline{\mathscr{S}}$ has order $\ell$.  Then the subgroup $G$ of $\B_\ell$ generated by $g$, $\mathscr{S}$, and $\Gamma(\ell)$ is a counterexample.
\end{prop}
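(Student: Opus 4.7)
The plan is to follow the template of the proof of Proposition \ref{iwahoricounterexampleprop1}: first verify the determinant condition $G \in \Fix(\ell^2)$, and then rule out the existence of $G$-stable lattice pairs of relative index $\ell^2$ with trivial quotient action.

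For the first step, note that since $\mathscr{S}, \Gamma(\ell) \subset \Sy_\ell = \ker \epsilon$ while $g \notin \Sy_\ell$, the quotient $G/(G \cap \Sy_\ell)$ is cyclic, generated by the coset of $g$. It therefore suffices to show that every element of the form $g^n s$ with $s \in \mathscr{S}$ satisfies $\det(g^n s - 1) \equiv 0 \pmod{\ell^2}$. I would use $\epsilon(g^n) = \epsilon(g)^n$ together with the explicit formula $\alpha(g^n) = \frac{1 - \epsilon(g)^n}{1 - \epsilon(g)}\alpha(g)$ and analogous polynomial expressions for $\beta(g^n)$ and $\gamma(g^n)$, obtained by iterating matrix multiplication inside $\B_\ell$. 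Substituting these into the expression $(\star\star)$ of Lemma \ref{f1sylowprop} with $g^n$ in place of $g$, and invoking the assumption that $s$ satisfies (\ref{f1detcond}) relative to $g$, algebraic manipulation (in the spirit of the $(\star)'$ computation of Proposition \ref{iwahoricounterexampleprop1}) should show that the analogous quantity vanishes. Combined with $f(g^n s) = n f(g) + f(s)$, this yields the required congruence for the entire group.

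For the second step, I would invoke Proposition \ref{latticeprop}(c) to reduce to $G$-stable lattice pairs $\mathcal{L}' \subset \mathcal{L}$ containing $\ell \mathcal{T}$ of relative index $\ell^2$. Since $\Gamma(\ell) \subset G$, the map $f$ is nontrivial on $G$, so Proposition \ref{latticeprop}(a) rules out the pair $(L_3, \ell L_1)$. For the remaining pairs, the analysis splits according to whether $\alpha(g)$ and $\gamma(g)$ vanish. If both $\alpha$ and $\gamma$ are nontrivial on $G$, Lemma \ref{jeffsylowlemma} restricts the $G$-stable lattices (between $\ell \mathcal{T}$ and $\mathcal{T}$) to the chain $L_0 \supset L_1 \supset L_2 \supset L_3$, and the nontriviality of $\epsilon(g)$ (whose eigenvalues $\epsilon(g)$ and $1/\epsilon(g)$ on $\overline{e}_2, \overline{e}_3$ are not equal to $1$) produces nontrivial $g$-action on every length-two quotient. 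If one of $\alpha, \gamma$ is trivial on $G$, additional $\overline{G}$-invariant subspaces of $\mathcal{T}/\ell \mathcal{T}$ appear, analogous to the sublattices $\widetilde{L}_1, \widetilde{L}_2$ introduced in the proof of Proposition \ref{iwahoricounterexampleprop1}; I would enumerate these and again use $\epsilon(g) \ne 1$ to obstruct triviality on any order-$\ell^2$ quotient.

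The main obstacle will be the second step: carefully cataloging the additional $G$-stable sublattices in the degenerate case where $\gamma$ or $\alpha$ is trivial on all of $G$, and verifying in each configuration that the semisimple scaling by $\epsilon(g)$ prevents any length-two quotient from admitting a trivial $G$-action. The first step is essentially a bookkeeping calculation once the right formulas for $\alpha(g^n), \beta(g^n), \gamma(g^n)$ are in hand, but the interplay between these and the condition (\ref{f1detcond}) — which, unlike (\ref{detcond2}) in Proposition \ref{iwahoricounterexampleprop1}, is nonlinear in $s$ — will demand some care.
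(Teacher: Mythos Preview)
Your plan has the right two-step structure and matches the paper's approach. There is, however, one twist you have not anticipated and that the paper's argument hinges on.

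In Proposition \ref{iwahoricounterexampleprop1}, the passage from $(\star)$ to $(\star)'$ goes through because (\ref{detcond2}) is linear in $(\beta(s),\gamma(s),\delta(s))$ and the scaling $\alpha(g^n)=\tfrac{1-\epsilon(g)^n}{1-\epsilon(g)}\alpha(g)$ makes the $g^n$-condition a scalar multiple of the $g$-condition. Here that mechanism fails: when you substitute $g\mapsto g^n$ into $(\star\star)$ and use (\ref{f1detcond}) for $g$, the expression does \emph{not} vanish identically. What the paper finds is that the residual term forces an additional constraint on $g$ itself. Concretely, splitting into the two cases coming from (\ref{new1}) in Lemma \ref{f1sylowprop}: if $\alpha(s)=0$ for all $s\in\mathscr{S}$ then the requirement $\det(g^ns-1)\equiv 0\pmod{\ell^2}$ for all $n$ forces $\alpha(g)=0$; in the other case it forces $\gamma(g)=0$. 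So the ``bookkeeping'' in step one is really a derivation of a nontrivial restriction on $g$, not a verification that the condition propagates automatically.

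This feeds directly into your step two. Because one of $\alpha|_G$ or $\gamma|_G$ is always forced to be zero, the case ``both $\alpha$ and $\gamma$ nontrivial on $G$'' never occurs, and Lemma \ref{jeffsylowlemma} is never directly applicable. Instead you are always in a degenerate case and must enumerate the extra stable sublattices: when $\alpha|_G=0$ these are the $\widetilde{L}_1,\widetilde{L}_2$ from Proposition \ref{iwahoricounterexampleprop1}, and when $\gamma|_G=0$ there is a single additional lattice $\widetilde{L}_3=\langle \ell\mathcal{T},e_2,e_4\rangle$. In each case the nontriviality of $\epsilon$ then kills all order-$\ell^2$ quotients, exactly as you outline. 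So your proposal is right in spirit, but be prepared for step one to impose constraints rather than simply confirm membership in $\Fix(\ell^2)$.
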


\begin{proof}
The proof of Lemma \ref{f1sylowprop} showed that there are two ways for $\overline{G \cap \Sy_\ell}$ to have order $\ell$; see equation (\ref{new1}), which yields the two cases (\ref{new2}) and (\ref{new3}).  We will consider these case by case. \\

\noindent \emph{Case 1.} Suppose $\alpha(s) = 0$ for all $s \in G \cap \Sy_\ell$. Then by (\ref{f1detcond}) we have 
\begin{align} \label{proofcond1}
2\alpha(g)(1-\epsilon(g))\beta(s) + (\epsilon(g)+1/\epsilon(g)-2)\delta(s) = 0
\end{align}
and so $g(G \cap \Sy_\ell)$ consists entirely of elements $\sigma$ such that $\det(\sigma -1) \equiv 0 \pmod{\ell^2}$.  Fix a positive integer $n >1$ such that $g^n \not \in G \cap \Sy_\ell$  and consider the coset $g^n(G \cap \Sy_\ell)$.  Because 
$$
\gamma(g^n) = \frac{(1-\epsilon(g)^{2n})}{(1-\epsilon(g)^2) \epsilon(g)^{n-2}} \gamma(g), 
$$
the determinant condition $\det(g^ns-1) \equiv 0 \pmod{\ell^2}$, under the assumption that $\alpha(s) = 0$ and  making the substitution $g \mapsto g^n$ in (\ref{f1detcond}) reduces to
$$
2\alpha(g^n)(1-\epsilon(g^n))\beta(s) + (\epsilon(g^n)+1/\epsilon(g^n)-2)\delta(s) \equiv 0 \pmod{\ell},
$$
which simplifies to 
$$
2\alpha(g)\beta(s)(1-\epsilon(g)^n)^2 \frac{(1-1/\epsilon(g)^{(n-1)})}{1-\epsilon(g)} \equiv 0\pmod{\ell}
$$
for all $s \in G \cap \Sy_\ell$.  If $\beta(s) = 0$ for all $g \in \Sy_\ell$, then by (\ref{proofcond1}) we have $\delta(s) = 0$ for all $s \in \Sy_\ell$ as well, and so $\overline{G \cap \Sy_\ell}$ is trivial, violating the hypothesis that it have order $\ell$.  We also assume that $\epsilon(g^n)$  and $\epsilon(g^{n-1})$ are  nontrivial, and thus we are left with $\alpha(g)=0$.

If $\alpha(g) = 0$ then $\alpha(G)= 0$, and then it is immediate to check that (\ref{f1detcond}) is satisfied for all nontrivial cosets $g^n(G \cap \Sy_\ell)$.  \\

\noindent \emph{Case 2.} Here we assume 
\begin{align} \label{proofcond2}
\gamma(g)a(s) + (1/\epsilon(g) -1)\beta(s)=2\beta(g)(1-\epsilon(g))\alpha(s) + (\epsilon(g)+1/\epsilon(g)-2)\delta(s) = 0
\end{align}
for all $s \in G \cap \Sy_\ell$. Now we proceed in an identical fashion to the previous case to determine conditions for an arbitrary coset $g^n(G \cap \Sy_\ell)$ to consist of elements $\sigma$ with $\det(\sigma -1) \equiv 0\pmod{\ell^2}$.  If $\gamma(g) \ne 0$, then a similar argument to the one above shows that we are forced to take $\alpha(s) = 0$ for all $s$.  But the linearity conditions then show $\beta(s) = \delta(s) = 0$ for all $s$ as well, whence $\overline{G \cap \Sy_\ell}$ is trivial, a contradiction.  On the other hand, if $\gamma(g)=0$, then $\gamma(G) = 0$; and then a  straightforward argument, similar to the one above, shows (substituting $g \mapsto g^n$ in the formula (\ref{proofcond2})) that $\det(g^ns -1) \equiv 0\pmod{\ell^2}$ for all $s \in G\cap \Sy_\ell$ and so $G \in \Fix(\ell^2)$.  

We see that either $\alpha|_{G} = 0$ or $\gamma|_{G}=0$, so we cannot apply Proposition \ref{jeffsylowlemma} directly.  In the case $\alpha|_{G}=0$ but $\gamma|_{G} \ne 0$, then the only lattices to check in addition to the $L_i$ are the $\widetilde{L}_1$ and $\widetilde{L}_2$ of the proof of Proposition \ref{iwahoricounterexampleprop1}.  Similarly, due to the nontriviality of $\epsilon$, there do not exist quotients of order $\ell^2$ with trivial $G$-action.

If $\gamma|_{G} =0$ but $\alpha|_{G} \ne 0$, then the only additional stable lattice to check is the lattice $\widetilde{L}_{3}$ generated by $\ell \mathcal{T}$ and the elements $e_2$ and $e_4$.  As in all other cases, the nontriviality of $\epsilon$  means that none of the quotients of order $\ell^2$ have trivial $G$-action.

This completes the proof.   
\end{proof}

As with Proposition \ref{iwahoricounterexampleprop1}, we can use Proposition \ref{iwahoricounterexampleprop2} to produce maximal counterexamples.  That is, we can find $G \in \Fix(\ell^2)$ such that $\Gamma(\ell) \subset G$ and such that $\overline{G}$ has order $(\ell-1)\ell$.  The following example has $G \cap \Sy_\ell \subset \ker \gamma$ and is distinct from the ones classified by Proposition \ref{iwahoricounterexampleprop1}.

\begin{example} Let $G \subset B_\ell$ be the full preimage of the group
\begin{align*}
\left(
\begin{smallmatrix}
1&0&0&0 \\
\alpha & \epsilon &0&0 \\
0 &0 &1/\epsilon &0 \\
0 & 0 &-\alpha/\epsilon &1 
\end{smallmatrix}
\right) \subset \Sp_4(\Z/\ell),
\end{align*}
where $\alpha \in \Z/\ell$ and $\epsilon \in (\Z/\ell)^\times$.  One can check that this group falls into the classification of the counterexamples given above. 
\end{example}

\section{Subgroups with an irreducible 2-dimensional factor} \label{parabolic_section}

Now we suppose that the subgroup $G \subset \Sp(\mathcal{T})$ is such that the semisimplification of the action of $\overline{G}$ on $\mathcal{T} / \ell \mathcal{T}$ contains an irreducible 2-dimensional factor.  Throughout this section, by fixing an appropriate symplectic basis $\{e_1, e_2, e_3, e_4\}$ of our free rank-$4$ $\Z_\ell$-module $\mathcal{T}$ (in which we require that $\overline{e}_4$ be fixed by all of $\overline{G}$), we identify $\Pa_\ell$ with a particular subgroup of $\Sp_4(\Z_\ell)$ whose reduction is lower block-triangular.  We recall the maps $\alpha, \beta, \beta', \alpha' : \Pa_\ell \to \Z/\ell$, as well as their induced maps $\overline{\alpha}, \overline{\beta}, \overline{\alpha}', \overline{\beta}' : \overline{\Pa}_\ell \to \Z/\ell$ defined in \S\ref{S2.1}.  Each element of $\overline{\Pa}_\ell$ is a lower-block-diagonal matrix which fixes $\overline{e}_4$ and whose middle block is a $2 \times 2$ submatrix reflecting how that operator acts on the component corresponding to the span of $\{\overline{e}_2, \overline{e}_3\}$ in the semisimplification of $\mathcal{T} / \ell \mathcal{T}$.  There is therefore a homomorphism $\overline{\pi} : \overline{\Pa}_\ell \to \SL_2(\Z/\ell)$ given by sending a matrix in $\overline{\Pa}_\ell$ to its middle block, which is a matrix in $\SL_2(\Z/\ell)$.  Composing this with $\pi_{\ell}$ gives us a homomorphism $\pi : G \to \SL_2(\Z/\ell)$.

When the image under $\pi$ of a subgroup $G \subset \Pa_\ell$ is reducible, via an appropriate change of symplectic bases of $\mathcal{T}$, it can be simultaneously conjugated to a group of lower-triangular matrices in $\SL_2(\F_{\ell})$ (without affecting the block-diagonal structure of $\overline{G}$), and therefore we are in the situation dealt with in \S\ref{sylowsection} and \S\ref{iwahorisection}.  In this section, we are concerned with the case that $\overline{\pi}(\overline{G})$ is an irreducible subgroup of $\SL_2(\F_{\ell})$, \emph{i.e.} there is no nontrivial subspace of $\F_{\ell}^2$ fixed by the whole group $\pi(G)$.

We now show how the vector $(\beta', \alpha')(g) = (\beta'(g), \alpha'(g)) \in \Fl^2$ is determined by $(\alpha, \beta)(g)$ and $\pi(g)$.  The group $\SL_2(\F_{\ell})$ injects into $\overline{\Pa}_\ell$ as the subgroup of all block-diagonal matrices whose first and last blocks are trivial; this injection $\SL_2(\F_{\ell}) \hookrightarrow \overline{\Pa}_\ell$ is a section of the surjective map $\overline{\pi}$.  Given any matrix $g \in \Pa_\ell$, we may multiply its reduction $\pi_\ell(g) \in \overline{\Pa}_\ell$ on the right by the block-diagonal matrix in $\overline{\Pa}_\ell$ corresponding to the image of $\pi(g)^{-1}$ to get a matrix $x \in \overline{\Pa}_\ell$ lying in $\ker(\overline{\pi})$.  We have seen in \S\ref{background} that then we have $(\overline{\beta}', \overline{\alpha}')(x) = (\overline{\beta}, -\overline{\alpha})(x)$.  One then checks directly through the operation of matrix multiplication that $(\alpha, \beta)(g) = (\overline{\alpha}, \overline{\beta})(\pi_\ell(g)) = (\overline{\alpha}, \overline{\beta})(x)$ and that we have the formula 
\begin{equation} \label{beta'}
(\beta', \alpha')(g) = \left(\begin{matrix} \beta(g) & -\alpha(g) \end{matrix}\right) \pi(g).
\end{equation}

We are now ready to present the main result of this section, which states that under the hypotheses of this section there are no counterexamples for $\ell \geq 3$ and which roughly classifies the counterexamples that exist for $\ell = 2$.  For notational convenience we switch to using $\Fl$ for $\Z/\ell$ to emphasize the fact that we are doing linear algebra rather than considering an image of reduction modulo $\ell$. 

\begin{thm} \label{parabolic}

Let $G \subset \Sp(\mathcal{T})$ be a counterexample satisfying that $\pi(G) \subset \SL_2(\F_{\ell})$ is an irreducible subgroup.  Then we have $\ell = 2$; \emph{i.e.} there are no counterexamples satisfying the above property when $\ell \geq 3$.  When $\ell = 2$, we have that $G \subset \Sp(\mathcal{T})$ satisfies either
\begin{itemize}
\item[(i)] $\overline{G}_2 \cong \pi(G) \times C_2$; or
\item[(ii)] $\overline{G}_2 \cong \pi(G) \cong \SL_2(\F_2)$.
\end{itemize}

In case (i), for any $H \subset \Gamma(2) \cap \ker(f)$, the subgroup of $\Sp(\mathcal{T})$ generated by $G$ and $H$ is also a counterexample.  In case (ii), for any $H \subset \ker(\pi_2)$, the subgroup of $\Sp(\mathcal{T})$ generated by $G$ and $H$ is also a counterexample (in particular, the full inverse image $\pi_2^{-1}(\overline{G}_2)$ is a maximal counterexample).

Moreover, there do exist counterexamples satisfying (i) and counterexamples satisfying (ii).

\end{thm}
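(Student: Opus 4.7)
The plan is to first derive an explicit formula for $\det(g-I) \pmod{\ell^2}$ when $g \in \Pa_\ell$.  Expanding along the first row of $g - I$, whose first three entries lie in $\ell\Z_\ell$ and whose $(1,4)$-entry equals $\ell f(g) + O(\ell^2)$, only the $(1,4)$-cofactor contributes modulo $\ell^2$, yielding $\det(g-I) \equiv -\ell f(g) A_{14}(g) \pmod{\ell^2}$, where $A_{14}(g)$ is the $3 \times 3$ determinant of the matrix obtained from $\pi_\ell(g) - I$ by deleting its top row and rightmost column.  Using (\ref{beta'}) and $\det\pi(g) = 1$, a direct calculation with $\pi(g) = \left(\begin{smallmatrix} a & b \\ c & d \end{smallmatrix}\right)$ simplifies this to
$$A_{14}(g) = -c\,\alpha(g)^2 + (a-d)\,\alpha(g)\beta(g) + b\,\beta(g)^2 + (2 - \tr \pi(g))\,\delta(g).$$
Note that $A_{14}(g) = 0$ automatically whenever $\pi(g) = I$, so $G \in \Fix(\ell^2)$ is equivalent to $f(g) A_{14}(g) \equiv 0 \pmod \ell$ for every $g \in G$.

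Next I would reduce the counterexample condition to the above determinant condition plus nontriviality of $f$.  Since $\overline{G}$ fixes $\overline{e}_4$ and $\pi(G)$ is irreducible, the only $\overline{G}$-stable subspaces of $\mathcal{T}/\ell\mathcal{T}$ are $0$, $\langle \overline{e}_4 \rangle$, $\langle \overline{e}_2, \overline{e}_3, \overline{e}_4 \rangle$, and $\mathcal{T}/\ell\mathcal{T}$, corresponding to the lattices $\ell\mathcal{T}$, $L_3$, $L_1$, $\mathcal{T}$.  The only pair containing $\ell\mathcal{T}$ with relative index $\ell^2$ is $(L_1, L_3)$; the $G$-action on $L_1/L_3$ factors through the irreducible (hence nontrivial) $\pi(G)$, while the action on $L_3/\ell L_1$ is nontrivial iff $f|_G \neq 0$.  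Once the hypothesis $\Gamma(\ell) \cap \ker f \subset G$ of Proposition~\ref{containslT} is in force, a subgroup $G$ is therefore a counterexample precisely when $G \in \Fix(\ell^2)$ and $f|_G$ is nontrivial.

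For $\ell \geq 3$ I would derive a contradiction from these two conditions.  Fix $g_0 \in G$ with $f(g_0) \neq 0$ and set $G_0 = \ker(f|_G)$; then every element of the coset $g_0 G_0$ must satisfy $A_{14} \equiv 0 \pmod \ell$.  Using the cocycle identity $(\alpha, \beta)(g_0 h) = (\alpha, \beta)(g_0) + \pi(g_0)(\alpha, \beta)(h)$ together with the analogous relation $\delta(g_0 h) = \delta(g_0) + \delta(h) + (\beta(g_0), -\alpha(g_0)) \pi(g_0)(\alpha(h), \beta(h))^T$, one expands $A_{14}(g_0 h)$ as a polynomial in $\alpha(h), \beta(h), \delta(h)$.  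As $h$ varies over $G_0$, irreducibility of $\pi(G_0)$ forces the image of $(\alpha, \beta)|_{G_0}$ to span a $\pi(G_0)$-invariant subspace of $\F_\ell^2$, which must be either $0$ or all of $\F_\ell^2$.  Separating the pure-quadratic and pure-linear components of the resulting vanishing identity---possible precisely because $\ell \geq 3$, so that the two degrees do not collapse---one eventually forces $f(g_0) = 0$, contradicting the choice of $g_0$.

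For $\ell = 2$ the quadratic structure of $A_{14}$ degenerates and counterexamples exist; the irreducible subgroups of $\SL_2(\F_2) = S_3$ are $C_3$ and $S_3$.  For case (ii), I would take $G = \pi_2^{-1}(\widetilde{S}_3)$, where $\widetilde{S}_3 \subset \overline{\Pa}_2$ is the natural Levi lift of $S_3$; verifying $G \in \Fix(4)$ via the formula for $A_{14}$ on a generating set and applying the lattice analysis yields the counterexample.  For case (i), I would construct $\overline{G}_2 = \pi(G) \times \langle x_4 \rangle$, using that the central involution $x_4$ commutes with the Levi copy of $\pi(G)$; the requirement $G \cap \Gamma(2) \subseteq \Gamma(2) \cap \ker f$ follows by noting that any element $h \in (G \cap \Gamma(2)) \smallsetminus \ker f$ would, together with $\overline{G}_2$, yield stable lattices with trivial quotient action.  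Stability of the counterexample property under enlargement by $H \subseteq \Gamma(2) \cap \ker f$ in case (i), or $H \subseteq \ker \pi_2$ in case (ii), is then immediate since $f$ and $A_{14}$ are unchanged on every coset of $G_0$ under such enlargement.  The main obstacle will be the polynomial-identity argument in the $\ell \geq 3$ case, which demands careful bookkeeping of the cocycle structure and irreducibility-based constraints to rule out every would-be counterexample.
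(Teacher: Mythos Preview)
Your lattice analysis has a genuine gap that propagates through the rest of the argument.  You assert that the only $\overline{G}$-stable subspaces of $\mathcal{T}/\ell\mathcal{T}$ are $0$, $\langle\overline{e}_4\rangle$, $\langle\overline{e}_2,\overline{e}_3,\overline{e}_4\rangle$, and the whole space.  This is false: nothing rules out a two-dimensional subspace of the form $\langle(-1,w_1,w_2,0),\,\overline{e}_4\rangle$ on which $\overline{G}$ acts trivially.  Concretely, for $\ell\geq 3$ take $\overline{G}$ to be the block-diagonal Levi lift of any irreducible subgroup of $\SL_2(\F_\ell)$ (so $\alpha=\beta=\delta=0$ identically), and let $G=\pi_\ell^{-1}(\overline{G})$.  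Then your own formula gives $A_{14}(g)=0$ for all $g$, so $G\in\Fix(\ell^2)$; and $f$ is nontrivial on $G$ since $\Gamma(\ell)\subset G$.  Yet $\overline{G}$ fixes the plane $\langle\overline{e}_1,\overline{e}_4\rangle$, so $G$ is \emph{not} a counterexample.  Thus your biconditional ``counterexample $\Leftrightarrow$ $G\in\Fix(\ell^2)$ and $f|_G\neq 0$'' fails, and the goal you set for the $\ell\geq 3$ step --- forcing $f(g_0)=0$ from $\Fix(\ell^2)$ alone --- is unattainable because such $G$ exist.

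The paper's proof confronts exactly this obstruction.  It first shows (via a cohomological argument, essentially Sah's lemma) that $(\alpha,\beta)$ is a coboundary on $G$, i.e.\ $(\alpha,\beta)(g)=\pi(g).w-w$ for a fixed $w$, and derives an explicit formula for $\delta(g)$ when $f(g)\neq 0$ and $\pi(g)$ is non-unipotent.  For $\ell\geq 3$ it then shows, case by case using the subgroup structure of $\SL_2(\F_\ell)$ and the map $f$, that either $f$ is forced to be trivial \emph{or} enough elements of $\overline{G}$ fix the vector $(-1,w_1,w_2,0)$ that the whole group does --- either way, $G$ is not a counterexample.  Your polynomial-identity sketch would need to be rebuilt around detecting this hidden fixed plane rather than forcing $f=0$.

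The same oversight infects your $\ell=2$ construction: the Levi lift $\widetilde{S}_3$ you propose for case~(ii) fixes $\langle\overline{e}_1,\overline{e}_4\rangle$ and is therefore \emph{not} a counterexample.  The actual case~(ii) counterexamples have $\overline{G}$ equal to the ``twisted'' copy $\{x_4^{\phi(y)}y : y\in S\}$ with $\phi:S_3\twoheadrightarrow\Z/2$, precisely so that no common fixed plane exists.
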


The rest of this section is dedicated to proving the above theorem.  Throughout the following arguments, we will freely use the fact that if a subgroup $G \subset \Sp(\mathcal{T})$ is a counterexample, then $f$ must be nontrivial on $G$, by Proposition \ref{latticeprop}(a).

\subsection{Restricting the possible images of counterexamples}

In this subsection, we will show that under the assumption of an irreducible $2$-dimensional factor which was established at the beginning of this section, a counterexample $G$ must satisfy $\overline{G} \cong \pi(G) \times C_{\ell}$ or $\overline{G} \cong \pi(G)$.  We first need to provide some basic results concerning the properties of the classical groups $\SL_2(\F_{\ell})$ and their irreducible subgroups, as in the below proposition.

For the statement below and the arguments given throughout the rest of the section, recall that a \textit{unipotent} operator $x$ is one satisfying $(x - 1)^n = 0$ for some $n \geq 1$.  In our situation where $x$ belongs to $\SL_2(\F_{\ell})$ for some prime $\ell$, this is equivalent to satisfying that $(x - 1)^2 = 0$; that $x - 1$ is non-invertible; that $x$ fixes some nontrivial vector $v \in \F_{\ell}^2$; or that the only eigenvalue of $x$ is $1$.

In what follows, an \textit{irreducible} subgroup of $\SL_2(\F_{\ell})$ is one which acts irreducibly on $\F_{\ell}^2$.

\begin{prop} \label{classicalgroups}

Let $\ell$ be a prime.  The following facts hold.

a) (i) If $\ell \geq 5$, then there is no nontrivial homomorphism from $\SL_2(\F_{\ell})$ to $\Z / \ell$.

\ \ (ii)The only normal subgroup of $\SL_2(\F_3)$ of index $3$ is the subgroup $Q_8 \lhd \SL_2(\F_3)$ coinciding with the subset of all elements whose orders are not divisible by $3$ and which is isomorphic to the quaternion group; there are thus only two nontrivial homomorphisms from $\SL_2(\F_3)$ to $\Z / 3$, both having kernel $Q_8$.

\ (iii) Since $\SL_2(\F_2)$ is isomorphic to the symmetric group $S_3$, the only nontrivial homomorphism from $\SL_2(\F_2)$ to $\Z / 2$ is the one whose kernel is the order-$3$ subgroup $A_3 \lhd \SL_2(\F_2)$ corresponding to the alternating group.

b) An element of $\SL_2(\F_{\ell})$ is unipotent if and only if it has order dividing $\ell$.

c) The order of any proper irreducible subgroup of $\SL_2(\F_{\ell})$ is not divisible by $\ell$; thus there is no nontrivial homomorphism from a proper irreducible subgroup of $\SL_2(\F_{\ell})$ to $\Z / \ell$.

d) Assume that $\ell \geq 3$, and let $H \subsetneq \SL_2(\F_{\ell})$ be a proper irreducible subgroup.  The group $\SL_2(\F_{\ell})$ is generated by set of non-unipotent matrices lying in $\SL_2(\F_{\ell}) \smallsetminus H$.

e) If $\ell \geq 3$, each irreducible subgroup of $\SL_2(\F_{\ell})$ has nontrivial center.

\end{prop}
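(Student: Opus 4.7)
The proof of Proposition~\ref{classicalgroups} breaks into five essentially independent facts about $\SL_2(\Fl)$, which I will address in order.

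For (a)(i) I will show that $\SL_2(\Fl)$ is perfect for $\ell\ge 5$ by choosing $a\in\Fl^\times$ with $a^2\ne 1$ (possible since $|\Fl^\times|\ge 4$) and using the identity
\[
\bigl[\operatorname{diag}(a,a^{-1}),\,\bigl(\begin{smallmatrix}1&s\\0&1\end{smallmatrix}\bigr)\bigr]=\bigl(\begin{smallmatrix}1&s(a^2-1)\\0&1\end{smallmatrix}\bigr)
\]
to exhibit every upper-unipotent matrix as a commutator; the analogous computation with the transpose handles the lower-unipotent case, and together these generate $\SL_2(\Fl)$, so the commutator subgroup is everything. For (a)(ii) I will identify $\SL_2(\F_3)$ with the binary tetrahedral group $2.A_4$ and $Q_8$ with the preimage of the Klein four-group $V\lhd A_4$; this $Q_8$ is the unique Sylow $2$-subgroup of $\SL_2(\F_3)$, hence normal, and any index-$3$ subgroup must contain it. Counting elements of each order ($1,1,8,6,8$ of orders $1,2,3,4,6$) confirms that $Q_8$ coincides with the set of elements whose orders are coprime to $3$. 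Part (a)(iii) is immediate from $\SL_2(\F_2)\cong S_3$. For (b), a unipotent $g=I+N$ with $N^2=0$ satisfies $g^\ell=I+\ell N=I$ in characteristic $\ell$; conversely, any $g$ of order dividing $\ell$ has both eigenvalues equal to $1$ in $\overline{\Fl}$ because $x^\ell-1=(x-1)^\ell$ has only the root $1$.

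For (c), suppose $H\subsetneq\SL_2(\Fl)$ is proper and irreducible with $\ell\mid|H|$. A Sylow $\ell$-subgroup $P\subseteq H$ is then a nontrivial unipotent group by (b), stabilizing a unique line $L_P\subset\Fl^2$. If $H$ has a unique Sylow $\ell$-subgroup, then $P\lhd H$ and $H\subseteq N_{\SL_2(\Fl)}(P)$ equals the Borel stabilizing $L_P$, contradicting irreducibility; otherwise $H$ contains two distinct Sylow $\ell$-subgroups stabilizing distinct lines, and the classical fact that two opposite root subgroups generate $\SL_2$ forces $H=\SL_2(\Fl)$, contradicting properness. Lagrange then precludes any nontrivial homomorphism $H\to\Z/\ell$.

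For (d), set $N:=\{g\in\SL_2(\Fl):g\text{ non-unipotent}\}$ and $K:=\langle N\smallsetminus H\rangle$. My first step is to prove $\langle N\rangle=\SL_2(\Fl)$: the subgroup $\langle N\rangle$ is normal because $N$ is a union of conjugacy classes, it properly contains the center $\{\pm I\}$ since $N$ has non-central elements, and the normal subgroup structure of $\SL_2(\Fl)$---quasi-simple for $\ell\ge 5$, with series $\{I\}\lhd\{\pm I\}\lhd Q_8\lhd\SL_2(\F_3)$ for $\ell=3$---forces $\langle N\rangle=\SL_2(\Fl)$ (for $\ell=3$ using that $N$ contains order-$6$ elements outside $Q_8$). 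To upgrade this to $K=\SL_2(\Fl)$, I will use (c) to conclude that $H$ contains no nontrivial unipotent element and then proceed by Dickson's classification of maximal proper subgroups of $\SL_2(\Fl)$ (Borels, normalizers of split or non-split tori, and the exceptional double covers $2.A_4$, $2.S_4$, $2.A_5$): in each case irreducibility of $H$ guarantees a conjugate maximal torus $T$ with $T\not\subseteq H$ and $T\not\subseteq M$ for the candidate maximal container $M\supseteq K$, producing an element of $N\smallsetminus H$ outside $M$, and two such tori of different types together generate $\SL_2(\Fl)$. The main obstacle is handling the exceptional cases at small $\ell$---especially $\ell=5$ with $H=2.A_4\subset\SL_2(\F_5)$---where $|H|$ and the candidate $|M|$ are comparable to $|N|$; these require an explicit search for the needed torus generators.

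For (e), when $\ell\ge 3$ the matrix $-I$ is the unique element of order $2$ in $\SL_2(\Fl)$, since $x^2=I$ and $\det(x)=1$ jointly force $x=\pm I$. Any irreducible $H$ of even order therefore contains $-I$, which is central in $\SL_2(\Fl)$ and hence in $H$. If $|H|$ is odd, then $H$ injects into $\PaSL_2(\Fl)$, and by Dickson's classification the only odd-order subgroups of $\PaSL_2(\Fl)$ are cyclic, so $H$ is cyclic and equals its own nontrivial center.
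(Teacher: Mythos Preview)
Your treatments of parts (a), (b), (c), and (e) are correct and, while more computational than the paper's (which mostly cites Serre and Lang for (a)(i), (b), (c), and the Dickson classification for (e)), they follow essentially the same underlying ideas. In particular, your perfectness argument for (a)(i) is a fine substitute for quoting simplicity of $\PaSL_2(\F_\ell)$, and your (e) is the paper's argument verbatim once one notes that odd-order subgroups of dihedral groups, $A_4$, $S_4$, $A_5$ are cyclic.

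Part (d), however, has a genuine gap. You outline a strategy---assume $K:=\langle N\smallsetminus H\rangle$ is proper, pass to a maximal overgroup $M$, and run through Dickson's list to locate a torus meeting neither $H$ nor $M$---but you do not carry it out, and you explicitly flag the exceptional cases (e.g.\ $\ell=5$, $H\cong 2.A_4$) as requiring ``an explicit search.'' Even the non-exceptional step is unclear: finding $T\not\subseteq H$ and $T\not\subseteq M$ does not by itself produce an element of $T$ lying outside \emph{both} $H$ and $M$, which is what you need. The paper bypasses all of this with a one-line trick: take $S:=\{x\in\SL_2(\F_\ell): -x\text{ is nontrivial unipotent}\}$. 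Each $x\in S$ has only $-1$ as an eigenvalue, so $S$ contains no unipotents; since $\ell$ is odd, $x^{\ell+1}=(-x)^{\ell+1}=-x$, so $\langle S\rangle$ contains every nontrivial unipotent and hence equals $\SL_2(\F_\ell)$; and $S\cap H=\varnothing$, because $x\in S\cap H$ would force the unipotent $-x=x^{\ell+1}$ into $H$, contradicting (b) and (c). This single set $S$ already witnesses the claim, with no case analysis needed. I recommend replacing your argument for (d) with this construction.
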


\begin{proof}

If $\ell \geq 3$, any homomorphism from $\SL_2(\F_{\ell})$ to $\Z / \ell\Z$ must kill the scalar $-1$, since the order of this element is never divisible by $\ell$.  Such a homomorphism therefore factors through the projective linear group $\SL_2(\F_{\ell}) / \{\pm 1\}$.  According to \cite[\S IV.3.4, Lemma 1]{serre}, this group is simple as long as $\ell \geq 5$.  Such a homomorphism must therefore be trivial, proving part (a)(i).  The statements of (a)(ii) and (a)(iii) are evident from direct verification.

Parts (c) and (b) are precisely the statement of \cite[Theorem XI.2.2]{lang} (see also \cite[\S IV.3.2, Lemma 2]{serre} and the unnamed statement appearing right before it in \cite[\S XI.2]{lang} respectively.

Now assume that $\ell \geq 3$, and let $H \subsetneq \SL_2(\F_{\ell})$ be a proper irreducible subgroup.  Consider the subset $S \subset \SL_2(\F_{\ell})$ consisting of all matrices $x$ such that $-x$ is nontrivial and unipotent.  Since each operator in $S$ has $-1$ as its only eigenvalue, there are no unipotent matrices in $S$.  Moreover, given any element $x \in S$, since $\ell$ is odd and $-x \neq 1$ is unipotent and so has order $\ell$, we have $x^{\ell + 1} = (-x)^{\ell + 1} = -x$.  This proves both that $S \cap H = \varnothing$ (because otherwise $H$ would contain the unipotent matrix $-x$ for each $x \in S$; it follows from parts (b) and (c) that this contradicts the fact that $H$ is proper and irreducible) and that $S$ generates the set of all unipotent matrices in $\SL_2(\F_{\ell})$, which are well known to generate all of $\SL_2(\F_{\ell})$.  Thus, part (d) is proved.

Now retaining our assumption that $\ell \geq 3$, the group $\SL_2(\F_{\ell})$ itself has nontrivial center since it contains the scalar $-1$.  Let $N \subsetneq \SL_2(\F_{\ell})$ be a proper irreducible subgroup.  By part (c), the order of $N$ is not divisible by $\ell$ and so we may apply \cite[Theorem XI.2.3]{lang} to get that $N / (N \cap \{\pm 1\})$ is isomorphic to a dihedral group or to $A_4$, $S_4$, or $A_5$.  One verifies through straightforward computation that the only element of order $2$ in $\SL_2(\F_{\ell})$ is the scalar $-1$.  It follows that if $N$ has even order, then $N$ has nontrivial center.  We therefore assume that $N$ has odd order.  Then we have that $N \cong N / (N \cap \{\pm 1\})$ itself must be an odd-order subgroup of a dihedral group or of $A_4$, $S_4$, or $A_5$.  We claim that the only odd-order subgroups of these groups are abelian, thus proving that $N$ still has nontrivial center.  Indeed, the only odd-order elements of a dihedral group lie in its index-$2$ cyclic subgroup and thus can only generate a cyclic subgroup, while we see by looking at the orders of $A_4$, $S_4$, and $A_5$ that their odd-order subgroups must have order dividing $15$, and all such groups are abelian.  Thus, part (e) is proved.

\end{proof}

\begin{lem} \label{coboundary}

Let $\ell$ be any prime and $G$ be any group in $\Fix(\ell^2)$.

a) For each nontrivial element $g \in G$ with $f(g) \neq 0$, there exists a vector $w_g = ((w_g)_1, (w_g)_2) \in \F_{\ell}^2$ such that $(\alpha, \beta)(g) = \pi(g).w_g - w_g$.  If $f(g) \neq 0$ and $\pi(g)$ is not unipotent, then $\pi_{\ell}(g)$ fixes the vector $(-1, (w_g)_1, (w_g)_2, 0) \in \F_{\ell}^4$ and we have the formula 
$$\delta(g) = \left(\begin{matrix} \beta(g) & -\alpha(g) \end{matrix}\right) (\pi(g) - 1)^{-1} \left(\begin{matrix} \alpha(g) \\ \beta(g) \end{matrix}\right).$$

b) Suppose that the maps $\alpha$ and $\beta$ both vanish on the subgroup $G \cap \ker(\pi)$.  Then there exists a vector $w = (w_1, w_2) \in \F_{\ell}^2$ (depending only on $G$) such that $(\alpha(g), \beta(g)) = \pi(g).w - w$ for every element $g \in G$.  For those elements $g \in G$ such that $f(g) \neq 0$ and $\pi(g)$ is not unipotent, we have that $\pi_{\ell}(g)$ fixes the vector $(-1, w_1, w_2, 0) \in \F_{\ell}^4$.

\end{lem}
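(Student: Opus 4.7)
The plan is to prove part (a) by extracting a single clean identity from the determinant condition $\det(g-1) \equiv 0 \pmod{\ell^2}$, and then to deduce part (b) by recognizing $g \mapsto (\alpha(g),\beta(g))$ as a $1$-cocycle and showing that the associated $H^1(\pi(G), \F_\ell^2)$ vanishes for every irreducible $\pi(G) \subset \SL_2(\F_\ell)$.

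For part (a), the key observation is that since $g \in \Pa_\ell$, every entry of row $1$ and of column $4$ of $g-1$ is divisible by $\ell$, with $(g-1)_{1,4} = \ell f(g)$. Consequently, modulo $\ell^2$ the only surviving terms in the permutation expansion of $\det(g-1)$ are those sending row $1$ to column $4$, and cofactor expansion along that entry gives $\det(g-1) \equiv -\ell f(g)\,\det(M) \pmod{\ell^2}$, where $M$ is the $3\times 3$ minor obtained by deleting row $1$ and column $4$. A direct calculation using the symplectic relation (\ref{beta'}) yields the identity
\[
\det(M) \;=\; \bigl(\beta(g),\,-\alpha(g)\bigr)(\pi(g) - 1)\binom{\alpha(g)}{\beta(g)} \;+\; \det(\pi(g) - 1)\cdot\delta(g),
\]
so $f(g) \neq 0$ forces this expression to vanish mod $\ell$. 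When $\pi(g)$ is non-unipotent, $\pi(g)-1$ is invertible and we set $w_g := (\pi(g)-1)^{-1}\binom{\alpha(g)}{\beta(g)}$; rearranging, together with the $2\times 2$ matrix identity $(\beta,-\alpha)\,A\,\binom{\alpha}{\beta} = -\det(A)\,(\beta,-\alpha)\,A^{-1}\binom{\alpha}{\beta}$, produces the stated formula for $\delta(g)$, and the fixed-vector assertion is then a row-by-row check: rows $2$ and $3$ of $\pi_\ell(g)\cdot(-1,(w_g)_1,(w_g)_2,0)^T$ match by construction of $w_g$, while row $4$ collapses to the $\delta(g)$ formula. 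When $\pi(g)$ is nontrivial and unipotent, $\pi(g)-1$ has rank one with kernel equal to image (since it squares to zero), and a short linear-algebra argument shows that the vanishing of the scalar $(\beta,-\alpha)(\pi(g)-1)\binom{\alpha}{\beta}$ is equivalent to $\binom{\alpha(g)}{\beta(g)} \in \mathrm{image}(\pi(g)-1)$, yielding $w_g$ (non-uniquely).

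For part (b), the cocycle identity $\binom{\alpha(gh)}{\beta(gh)} = \binom{\alpha(g)}{\beta(g)} + \pi(g)\binom{\alpha(h)}{\beta(h)}$ follows directly by reading off the first column of the matrix product $\pi_\ell(gh) = \pi_\ell(g)\pi_\ell(h)$. Under the hypothesis that $\alpha$ and $\beta$ vanish on $G \cap \ker(\pi)$, this cocycle descends to the finite irreducible subgroup $H := \pi(G) \subset \SL_2(\F_\ell)$, so the problem reduces to showing $H^1(H,\F_\ell^2) = 0$. If $H$ is a proper irreducible subgroup of $\SL_2(\F_\ell)$, Proposition \ref{classicalgroups}(c) gives $\gcd(|H|,\ell)=1$ and the averaging trick exhibits the coboundary explicitly. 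If $H = \SL_2(\F_\ell)$, a Hochschild-Serre argument with the normal subgroup $\{\pm I\}$ (for $\ell$ odd, where the $-I$-fixed subspace of $\F_\ell^2$ vanishes) or with the $3$-cycle subgroup of $S_3 \cong \SL_2(\F_2)$ (where the order-$3$ matrix has no $+1$-eigenvector on $\F_2^2$) reduces the vanishing of $H^1$ to the triviality of an easily computed invariant subspace. The fixed-vector claim in part (b) is then immediate from part (a): for $g$ with $\pi(g)$ non-unipotent, the uniqueness of $w_g$ forces it to coincide with the uniform $w$ produced above.

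The main obstacle is the unipotent subcase of part (a): the determinant condition supplies only one scalar equation, whereas membership of $\binom{\alpha(g)}{\beta(g)}$ in the $1$-dimensional image of $\pi(g)-1$ is also a single linear condition, but the equivalence between the two is not formal---it rests on the coincidence of image and kernel for rank-one nilpotent $2\times 2$ matrices, combined with the precise shape imposed by (\ref{beta'}). The $H^1$-vanishing in part (b), by contrast, reduces to standard group cohomology once the cocycle structure is recognized.
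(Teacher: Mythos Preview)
Your proof is correct, and both parts take routes that differ from the paper's.

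For part (a), the paper argues via Remark~\ref{fix}: membership in $\Fix(\ell^2)$ means each $g$ fixes an order-$\ell^2$ submodule of $\mathcal{T}/\ell^2\mathcal{T}$, and the paper locates this submodule explicitly, extracting $w_g$ from a fixed vector $\overline{v} \in \mathcal{T}/\ell\mathcal{T}$ (splitting into the cases $v_1 \neq 0$ and $v_1 = 0$). Your approach is purely algebraic: the cofactor expansion $\det(g-1) \equiv -\ell f(g)\det(M) \pmod{\ell^2}$ together with the identity $\det(M) = (\beta,-\alpha)(\pi(g)-1)\binom{\alpha}{\beta} + \det(\pi(g)-1)\,\delta(g)$ gives everything at once. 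Your $2\times 2$ identity $(\beta,-\alpha)A\binom{\alpha}{\beta} = -\det(A)\,(\beta,-\alpha)A^{-1}\binom{\alpha}{\beta}$ (an instance of $A + \operatorname{adj}(A) = \tr(A)\cdot I$) is a clean shortcut to the $\delta$-formula. The paper's geometric route makes the fixed-vector statement immediate, while yours makes the scalar determinant constraint explicit---each illuminates what the other leaves implicit. Your handling of the unipotent case (kernel $=$ image for a rank-one nilpotent, so the vanishing of the quadratic form forces $\binom{\alpha}{\beta}$ into the image) is tighter than the paper's, which reaches the same conclusion by a slightly longer fixed-vector analysis.

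For part (b), both arguments recognize the cocycle identity (the paper records it as (\ref{cocycle})). The paper then invokes Sah's Lemma---any irreducible subgroup of $\SL_2(\F_\ell)$ with $\ell \geq 3$, and also $A_3$ when $\ell = 2$, has a nontrivial central element $z$ with $z-1$ invertible (Proposition~\ref{classicalgroups}(e))---and handles the remaining case $\ell=2$, $\pi(G)=\SL_2(\F_2)$ by an explicit two-generator coboundary computation. Your split (coprimality of $|H|$ and $\ell$ for proper irreducible $H$ via Proposition~\ref{classicalgroups}(c); Hochschild--Serre with $\{\pm I\}$ or $A_3$ for the full $\SL_2$) is equally valid and treats $\SL_2(\F_2)$ uniformly rather than ad hoc.
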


\begin{proof}

Let $g \in G$ be an element such that $f(g) \neq 0$.  Since $G \in \Fix(\ell^2)$, it follows from Remark \ref{fix} that $\pi_{\ell^2}(g) \in \Sp_4(\Z / \ell^2)$ fixes a submodule of $\mathcal{T} / \ell^2 \mathcal{T}$ of order $\ell^2$.  It is already clear that $\pi_{\ell^2}(g)$ fixes the mod-$\ell^2$ image of $\ell e_4$; it must therefore be the case that $\pi_{\ell^2}(g)$ fixes a vector $u \in \mathcal{T} / \ell^2 \mathcal{T}$ such that $\ell e_4$ modulo $\ell^2$ is $\ell u$ (that is, $u = (\ell u_1, \ell u_2, \ell u_3, u_4)$ with $u_4 \neq 0$) or that there is a vector $v \in \mathcal{T} / \ell^2 \mathcal{T}$ with $\ell v \neq \ell \pi_{\ell^2}(e_4)$ such that $g$ fixes $\ell v$.  The first case is impossible, as one verifies easily that the first entry of $g.u$ equals the first entry of $u$ plus $f(g) u_4 \neq 0$.  We therefore have a vector $v \in \mathcal{T} / \ell^2 \mathcal{T}$ such that $\ell g.v = \ell v$, or equivalently, such that the image modulo $\ell$ of $v$ is fixed under multiplication by $\pi_{\ell^2 \to \ell}(g)$.  Write $\overline{v} = (v_1, v_2, v_3, v_4) \in \F_\ell^4$ for the image modulo $\ell$ of $v$.  We observe that the second and third entries of $\pi_\ell(g) \overline{v}$ are given by the vector $v_1 (\alpha(g), \beta(g)) + \pi(g). (v_2, v_3)$.  The fact that $\pi_\ell(g).\overline{v} = \overline{v}$ now implies 
\begin{equation}
(v_2, v_3) = v_1 (\alpha(g), \beta(g)) + \pi(g).(v_2, v_3).
\end{equation}
  If $v_1 \neq 0$, then it follows that $(\alpha, \beta)(g) = -v_1^{-1}(\pi(g) \, (v_2, v_3) - (v_2, v_3))$, and we get the first claim of part (a) when we take $w_g = -v_1^{-1}(v_2, v_3)$.  We therefore assume that $v_1 = 0$.  In this case the above equation implies that $(v_2, v_3)$ is invariant under multiplication by $\pi(g)$ (in particular, this implies that $\pi(g)$ is unipotent).  It follows from the above discussion that the final entry of $\pi_2(g).\overline{v}$ is equal to 
$$\left( \begin{matrix} \beta & -\alpha \end{matrix} \right) \pi(g) \left( \begin{matrix} v_2 \\ v_3 \end{matrix} \right) + v_4 = \left( \begin{matrix} \beta & -\alpha \end{matrix} \right) \left( \begin{matrix} v_2 \\ v_3 \end{matrix} \right) + v_4.$$
  Since we have $\pi_2(g).\overline{v} = \overline{v}$, it follows that $\left( \begin{smallmatrix} \beta & -\alpha \end{smallmatrix} \right) \left( \begin{smallmatrix} v_2 \\ v_3 \end{smallmatrix} \right) = 0$.  It is now clear that the vector $(\beta, -\alpha) \in \F_{\ell}^2$ must be a scalar multiple of $(v_3, -v_2)$, and so $(\alpha, \beta)(g)$ is a scalar multiple of $(v_2, v_3)$.  Then we take $w_g$ to be any vector in the subspace $\langle \overline{e}_2, \overline{e}_3 \rangle$ which is \textit{not} a scalar multiple of $(v_2, v_3)$.  Since the operator $\pi(g)$ is not the identity, it cannot also fix $w_g$.  It is now easily verified that $\pi(g).w_g - w_g$ is a nontrivial scalar multiple of $(v_2, v_3)$ and thus also of $(\alpha(g), \beta(g))$; after replacing $w_g$ with a suitable multiple of itself, we even get $\pi(g).w_g - w_g = (\alpha(g), \beta(g))$, and the first claim of part (a) follows.

Now suppose that $f(g) \neq 0$ and $\pi(g)$ is not unipotent.  We have seen above that $\pi_{\ell}(g)$ fixes a vector $(v_1, v_2, v_3, v_4) \in \F_{\ell}^4$ and that we must have $v_1 \neq 0$, because otherwise $\pi(g)$ would be unipotent.  We have shown that in this case, we may take $w_g = -v_1^{-1}(v_2, v_3)$.  Since $\pi(g)$ fixes both $-v_1^{-1}(v_1, v_2, v_3, v_4) = (-1, (w_g)_1, (w_g)_2, -v_1^{-1}v_4)$ and $(0, 0, 0, 1)$, we get the claim that $(-1, (w_g)_1, (w_g)_2, 0)$ is fixed by $\pi(g)$.  Now the final entry of $\pi(g) (-1, (w_g)_1, (w_g)_2, 0)$ is given by 
\begin{equation}
0 = -\delta(g) + \beta'(g)(w_g)_1 + \alpha'(g)(w_g)_2 = -\delta(g) + \left(\begin{matrix} \beta'(g) & \alpha'(g) \end{matrix}\right) \left(\begin{matrix} (w_g)_1 \\ (w_g)_2 \end{matrix}\right).
\end{equation}
  Since from the discussion at the start of this section we have $(\beta', \alpha')(g) = \left( \begin{matrix}\beta(g) & -\alpha(g) \end{matrix} \right) \pi(g)$ and we have shown above that $(\alpha, \beta)(g) = (\pi(g) - 1).w_g$, we get 
\begin{equation}
\delta(g) = \left(\begin{matrix} \beta(g) & -\alpha(g) \end{matrix}\right) \pi(g) (\pi(g) - 1)^{-1} \left(\begin{matrix} \alpha(g) \\ \beta(g) \end{matrix}\right).
\end{equation}
  Now one sees that the above is equivalent to the formula claimed in part (a) by noting that $\pi(g) (\pi(g) - 1)^{-1} = (\pi(g) - 1)^{-1} + 1$ and $\left(\begin{matrix} \beta(g) & -\alpha(g) \end{matrix}\right) \left(\begin{matrix} \alpha(g) & \beta(g) \end{matrix}\right)^{\top} = 0$.

We now assume the hypothesis of part (b), which implies that the map $(\alpha, \beta) : G \to \F_{\ell}^2$ induces a map $(\hat{\alpha}, \hat{\beta}) : \pi(G) \to \F_{\ell}^2$.  We observe directly from multiplying matrices that we have the identity 
\begin{equation} \label{cocycle}
(\hat{\alpha}, \hat{\beta})(xy) = (\hat{\alpha}, \hat{\beta})(x) +  x.(\hat{\alpha}, \hat{\beta})(y)
\end{equation}
 for any $x, y \in \pi(G)$.  The map $(\hat{\alpha}, \hat{\beta})$ is therefore a cocycle with respect to the obvious action of $\pi(G)$ on $\F_{\ell}^2$.  The claim of part (b) is equivalent to saying that $(\hat{\alpha}, \hat{\beta})$ is also a coboundary, so it suffices to prove that the first group cohomology of $\pi(G)$ with coefficients in the $\pi(G)$-module $\F_{\ell}^2$ is trivial.  We first assume that we do not have $\ell = 2$ and $\pi(G) = \SL_2(\F_2)$ and prove the vanishing of the first group cohomology by appealing to Sah's Lemma \cite[Lemma VI.10.2]{lang}, which implies as an immediate corollary that if a group $A$ acting on a module $M$ has a central element $x$ such that $x - 1$ acts as an automorphism on $M$, then the first group homomology $H^1(A, M)$ vanishes.  In our case, we need to show that $\pi(G)$ has a central element $x$ such that the operator $x - 1$ is invertible.  Either we have $\ell = 2$ and $\pi(G) = A_3$ (which is a nontrivial abelian group), or we have $\ell \geq 3$ and then Proposition \ref{classicalgroups}(e) implies that $\pi(G)$ has a nontrivial central element.  In either case, choose an element $x \neq 1$ lying in the center of $\pi(G)$.  Since a unipotent operator cannot lie in the center of an irreducible subgroup of $\SL_2(\F_{\ell})$, the operator $x - 1$ must be invertible, and we have proved part (b) except in the exceptional case that $\ell = 2$ and $\pi(G) = \SL_2(\F_2)$.  In this case, we consider the values that $(\overline{\alpha}, \overline{\beta})$ takes on the elements $u_1 := \left(\begin{smallmatrix} 1 & 1 \\ 0 & 1 \end{smallmatrix}\right)$ and $u_2 := \left(\begin{smallmatrix} 1 & 0 \\ 1 & 1 \end{smallmatrix}\right)$, noting that together these elements generate $\SL_2(\F_2)$.  It follows from part (a) that there exist scalars $c_1, c_2 \in \F_2$ such that $(\hat{\alpha}, \hat{\beta})(u_1) = c_1 (1, 0)$ and $(\hat{\alpha}, \hat{\beta})(u_2) = c_2 (0, 1)$.  Taking $w = (c_2, c_1) \in \F_2^2$, we get the desired statement.
\end{proof}

\begin{lem} \label{alphabeta0}

If $G$ is a counterexample, then the maps $\alpha$ and $\beta$ vanish on the subgroup $G \cap \ker(\pi)$.

\end{lem}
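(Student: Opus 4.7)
My plan is to split into cases based on the value of $f$. The key observation underlying both cases is that matrix multiplication in $\Pa_\ell$ shows $(\alpha, \beta) : G \to \F_\ell^2$ satisfies the 1-cocycle identity $(\alpha, \beta)(g_1 g_2) = (\alpha, \beta)(g_1) + \pi(g_1)(\alpha, \beta)(g_2)$. Restricted to the normal subgroup $N := G \cap \ker(\pi)$, where $\pi$ is trivial, this turns $(\alpha, \beta)|_N$ into a genuine group homomorphism $N \to \F_\ell^2$. A further application of the cocycle identity yields $(\alpha, \beta)(h g h^{-1}) = \pi(h)(\alpha, \beta)(g)$ for $h \in G$ and $g \in N$, so $(\alpha, \beta)(N) \subseteq \F_\ell^2$ is $\pi(G)$-stable, hence either zero or all of $\F_\ell^2$ by the irreducibility of $\pi(G)$.

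For the easy case, suppose $g \in N$ has $f(g) \neq 0$. Lemma \ref{coboundary}(a) applies directly and yields $(\alpha(g), \beta(g)) = (\pi(g) - 1) w_g = 0$ since $\pi(g) = 1$. To reduce the case $f(g) = 0$ to this, I will argue that as long as there exists some auxiliary $g_0 \in N$ with $f(g_0) \neq 0$, the product $g g_0 \in N$ satisfies $f(g g_0) = f(g_0) \neq 0$, so the easy case gives $(\alpha, \beta)(g g_0) = 0 = (\alpha, \beta)(g_0)$, and the homomorphism property on $N$ forces $(\alpha, \beta)(g) = 0$.

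The remaining possibility is that $f$ vanishes identically on $N$. Then $f$ factors through a nontrivial homomorphism $\pi(G) \to \F_\ell$, which by Proposition \ref{classicalgroups}(a) and (c) forces $\pi(G) = \SL_2(\F_\ell)$ for $\ell \in \{2, 3\}$. The main obstacle is ruling out the alternative $(\alpha, \beta)(N) = \F_\ell^2$ in this low-$\ell$ regime. My approach here is to use the surjectivity of $(\alpha, \beta)|_N$ to modify any $h \in G$ with $f(h) \neq 0$ by choosing $n \in N$ with $(\alpha, \beta)(n) = -(\alpha, \beta)(h)$, so that $h' := nh$ has $(\alpha, \beta)(h') = 0$, $f(h') = f(h) \neq 0$, and $\pi(h') = \pi(h)$. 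When $\pi(h)$ is non-unipotent, Lemma \ref{coboundary}(a) then pins $\pi_\ell(h')$ as fixing $(-1, 0, 0, 0) \in \F_\ell^4$ and forces $\delta(h') = 0$ via its explicit formula. The rigidity this imposes on $\overline{G}$, combined with the criterion of Proposition \ref{latticeprop}(c), should produce a $G$-stable sublattice pair $\mathcal{L}' \subset \mathcal{L}$ of index $\ell^2$ on whose quotient $G$ acts trivially, contradicting the counterexample hypothesis and finishing the proof.
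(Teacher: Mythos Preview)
Your case split according to whether $f|_N$ is trivial is a good idea, and the treatment of the case $f|_N \neq 0$ is correct and quite elegant: the observation that Lemma~\ref{coboundary}(a) applied to $g \in N$ with $f(g) \neq 0$ immediately forces $(\alpha,\beta)(g) = (\pi(g)-1)w_g = 0$, together with the translation trick, handles that case in two lines. In fact this subsumes, with much less work, the paper's entire argument for the sub-case $\pi(G) \subsetneq \SL_2(\F_\ell)$ (which uses the $\delta$-formula and a quadratic-versus-linear trick): by Proposition~\ref{classicalgroups}(c) a proper irreducible $\pi(G)$ admits no nontrivial map to $\Z/\ell$, so $f$ can never factor through it, and you are automatically in your easy case.

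The gap is in your remaining case $f|_N = 0$, $\pi(G) = \SL_2(\F_\ell)$ with $\ell \in \{2,3\}$. Your plan is to pick $h$ with $f(h) \neq 0$ and $\pi(h)$ \emph{non-unipotent}, translate to $(\alpha,\beta)(h') = 0$, and invoke the $\delta$-formula and Proposition~\ref{latticeprop}(c) to locate a bad lattice pair. This does not work. For $\ell = 2$ the induced map $\bar f : \SL_2(\F_2) \to \Z/2$ is the sign character on $S_3$, so $\bar f(x) \neq 0$ exactly when $x$ is a transposition, i.e.\ exactly when $x$ is unipotent; there are \emph{no} non-unipotent elements with $\bar f \neq 0$ and your argument cannot even begin. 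For $\ell = 3$ such elements exist (those of order $6$), but knowing that a single coset representative $h'_x$ is block-diagonal does not constrain $\overline{G}$ enough to produce a $G$-fixed order-$\ell^2$ quotient, since $\pi_\ell(N)$ (which has order $\ell^2$ under your hypothesis) is still sitting inside $\overline{G}$ and destroys any candidate subspace containing $\bar e_1$.

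The fix is to use \emph{unipotent} elements instead, which is essentially the paper's argument in its $\pi(G) = \SL_2(\F_\ell)$ case. In your situation every nontrivial unipotent $u \in \SL_2(\F_\ell)$ has $\bar f(u) \neq 0$ (for $\ell = 2$ these are the transpositions; for $\ell = 3$ they are the order-$3$ elements outside $Q_8$). So every $h \in \pi^{-1}(u)$ has $f(h) \neq 0$, and Lemma~\ref{coboundary}(a) forces $(\alpha,\beta)(h)$ to lie in the one-dimensional image of $u - 1$. Replacing $h$ by $nh$ for arbitrary $n \in N$ and using $(\alpha,\beta)(nh) = (\alpha,\beta)(n) + (\alpha,\beta)(h)$ shows $(\alpha,\beta)(n)$ lies in that same line for every $n \in N$, contradicting $(\alpha,\beta)(N) = \F_\ell^2$. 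No lattice considerations are needed.
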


\begin{proof}

Suppose that there is an element $g \in G \cap \ker(\pi)$ with $(\alpha, \beta)(g) \neq (0, 0)$.  Using the identity in (\ref{cocycle}), it is easy to verify that for any $h \in G$ we have $(\alpha(h g h^{-1}), \beta(h g h^{-1})) = \pi(h)(\alpha, \beta)(g)$.  Since $\pi(G)$ is irreducible, there is some $h \in G$ such that the set $\{\pi(h).(\alpha, \beta)(g), (\alpha, \beta)(g)\}$ is linearly independent.  We note that the map $(\alpha, \beta)$ is a homomorphism when restricted to $G \cap \ker(\pi)$ thanks to the identity (\ref{cocycle}).  It follows that given any vector $(\alpha_0, \beta_0) \in \F_{\ell}^2$, there is an element $g \in G \cap \ker(\pi)$ with $(\alpha, \beta)(g) = (\alpha_0, \beta_0)$.  We shall show that for any $g \in G \cap \ker(\pi)$ such that $(\alpha, \beta)(g) \neq (0, 0)$, we have $f(g) \neq 0$.  Under the assumption that such an element $g$ exists, this implies an obvious contradiction and thus will prove the statement in the lemma.

In order to prove our claim that $f(g) \neq 0$ for any $g \in G \cap \ker(\pi)$ such that $(\alpha, \beta)(g) \neq (0, 0)$, we consider the cases $\pi(G) = \SL_2(\F_{\ell})$ and $\pi(G) \subsetneq \SL_2(\F_{\ell})$ separately.  We first assume that $\pi(G) = \SL_2(\F_{\ell})$.  Assume that there exists an element $h \in G \cap \ker(\pi)$ with $(\alpha, \beta)(h) \neq (0, 0)$ and $f(h) = 0$.  Let $y \in \F_{\ell}^2$ be a vector which is not a scalar multiple of $(\alpha, \beta)(h)$.  Then $\pi(G)$ contains a nontrivial unipotent operator $u$ which fixes $y$, so that $uw - w$ is a scalar multiple of $y$ for each $w \in \F_{\ell}^2$.  There exists some $g \in \pi^{-1}(u) \subset G$ with $f(g) \neq 0$, because otherwise, the fact that any nontrivial unipotent operator in $\SL_2(\F_{\ell})$ normally generates all of $\SL_2(\F_{\ell})$ implies that $f$ is trivial on all of $G$.  Now Lemma \ref{coboundary}(a) implies that $(\alpha, \beta)(g)$ is a scalar multiple of $y$.  We have $\pi(hg) = u$ and $f(hg) \neq 0$, while the identity (\ref{cocycle}) implies that $(\alpha, \beta)(hg) = (\alpha, \beta)(h) + (\alpha, \beta)(g)$, which is not a scalar multiple of $y$, thus contradicting Lemma \ref{coboundary}(a).

We now assume that $\pi(G) \subsetneq \SL_2(\F_{\ell})$.  Assume again that there exists an element $h \in G \cap \ker(\pi)$ with $(\alpha, \beta)(h) \neq (0, 0)$ and $f(h) = 0$.  Since the set of all such elements is clearly closed under conjugation and group multiplication, this implies that in fact for every vector $(\alpha_0, \beta_0) \in \F_{\ell}^2$ there is an element $h \in G \cap \ker(\pi)$ with $(\alpha, \beta)(h) = (\alpha_0, \beta_0)$ and $f(h) = 0$.  There exists an element $g \in G \smallsetminus \ker(\pi)$ with $f(g) \neq 0$, because otherwise $f$ would be trivial on $G$.  Since $\pi(G)$ is a proper irreducible subgroup of $\SL_2(\F_{\ell})$, we know that $\pi(g)$ is not unipotent by Lemma \ref{coboundary}(b), (c), and so we may apply Lemma \ref{coboundary}(a) to get the formula given there for $\delta(g)$.  We may do the same to get a formula for $\delta(hg)$ for any $h \in G \cap \ker(\pi) \cap \ker(f)$ since then $\pi(hg) = \pi(g)$ and $f(hg) = f(g) \neq 0$.  Using the previously noted fact that $(\alpha, \beta)(hg) = (\alpha, \beta)(h) + (\alpha, \beta)(g)$ along with the easily verified fact that $\delta(hg) = \delta(h) + \delta(g)$, we get the below general formula for $\delta(hg)$.
\begin{equation} \label{delta}
\delta(hg) = \big(\left(\begin{matrix} \beta(h) & -\alpha(h) \end{matrix}\right) + \left(\begin{matrix} \beta(g) & -\alpha(g) \end{matrix}\right)\big) (\pi(g) - 1)^{-1} \Big(\left(\begin{matrix} \alpha(h) \\ \beta(h) \end{matrix}\right) + \left(\begin{matrix} \alpha(g) \\ \beta(g) \end{matrix}\right)\Big)
\end{equation}
  We now expand the above formula, use the easily verified fact that $\delta(hg) = \delta(h) + \delta(g)$, and subtract the formula for $\delta(g)$ from (\ref{delta}) to get 
\begin{equation} \label{delta2} \begin{split}
\delta(h) = \left(\begin{matrix} \beta(h) & -\alpha(h) \end{matrix}\right) (\pi(g) - 1)^{-1} \left(\begin{matrix} \alpha(g) \\ \beta(g) \end{matrix}\right) &+ \left(\begin{matrix} \beta(g) & -\alpha(g) \end{matrix}\right) (\pi(g) - 1)^{-1} \left(\begin{matrix} \alpha(h) \\ \beta(h) \end{matrix}\right) \\
&+ \left(\begin{matrix} \beta(h) & -\alpha(h) \end{matrix}\right) (\pi(g) - 1)^{-1} \left(\begin{matrix} \alpha(h) \\ \beta(h) \end{matrix}\right).
\end{split} \end{equation}
  We now use the fact that the final term on the right-hand side of (\ref{delta2}) is in some sense ``quadratic" while the other terms in (\ref{delta2}) are ``linear" in order to derive a contradiction.  More precisely, we consider the cases when $\ell \geq 3$ and $\ell = 2$ separately as follows.  We note in either case that $(\alpha, \beta, \delta)(h^2) = 2(\alpha, \beta, \delta)(h)$ for any $h \in G \cap \ker(\pi)$.  If $\ell \geq 3$, then choose an element $h \in G \cap \ker(\pi)$ such that $(\alpha(h), \beta(h))$ is not an eigenvector of $\pi(g) - 1$, which ensures that 
\begin{equation} \label{quadratic}
\left(\begin{matrix} \beta(h) & -\alpha(h) \end{matrix}\right) (\pi(g) - 1)^{-1} \left(\begin{matrix} \alpha(h) \\ \beta(h) \end{matrix}\right) \neq 0.
\end{equation}
  Then applying the formula (\ref{delta2}) to $h^2$ and subtracting (\ref{delta}), we get 
\begin{equation}
2\left(\begin{matrix} \beta(h) & -\alpha(h) \end{matrix}\right) (\pi(g) - 1)^{-1} \left(\begin{matrix} \alpha(h) \\ \beta(h) \end{matrix}\right) = 0,
\end{equation}
 which contradicts (\ref{quadratic}).  Now if $\ell = 2$, we deduce from the relations given in \S\ref{sylowsection} that $\alpha$, $\beta$, and $\delta$ are all homomorphisms when restricted to $G \cap \ker(\pi)$.  Noting that $\pi(g) \in \{\left(\begin{smallmatrix} 1 & 1 \\ 1 & 0 \end{smallmatrix}\right), \left(\begin{smallmatrix} 0 & 1 \\ 1 & 1 \end{smallmatrix}\right)\}$, we now compute 
$$\left(\begin{matrix} \beta(h) & -\alpha(h) \end{matrix}\right) (\pi(g) - 1)^{-1} \left(\begin{matrix} \alpha(h) \\ \beta(h) \end{matrix}\right) = \alpha(h)^2 + \alpha(h)\beta(h) + \beta(h)^2.$$
  Choosing elements $h_1, h_2 \in G \cap \ker(\pi)$ such that $(\alpha, \beta)(h_1) = (1, 0)$ and $(\alpha, \beta)(h_2) = (0, 1)$, putting $h = h_1 + h_2$ into (\ref{delta2}) and then subtracting the formula (\ref{delta2}) for $h_1$ and for $h_2$ yields the desired contradiction.
\end{proof}

\begin{cor}

Let $G \subset \Sp(\mathcal{T})$ be a counterexample satisfying that $\pi(G) \subset \SL_2(\F_\ell)$ is an irreducible subgroup.  We then have $\overline{G} \cong \pi(G) \times C_{\ell}$ or $\overline{G} \cong \pi(G)$.

\end{cor}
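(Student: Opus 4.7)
The plan is to identify $N := \overline{G} \cap \ker(\overline{\pi})$ as a central subgroup of $\overline{G}$ of order dividing $\ell$ and then show that the resulting central extension $1 \to N \to \overline{G} \to \pi(G) \to 1$ is either trivial or split.  For the first step, Lemma \ref{alphabeta0} gives that $\overline{\alpha}$ and $\overline{\beta}$ vanish on $N$; combined with formula (\ref{beta'}) this also forces $\overline{\alpha}' = \overline{\beta}' = 0$ on $N$, so every element of $N$ is the identity matrix perturbed only in its $(4,1)$-entry.  A short matrix calculation shows that any such element is central in $\overline{\Pa}_\ell$, and the map $\delta$ embeds $N$ into $\F_\ell$, giving $|N| \in \{1, \ell\}$.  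If $|N| = 1$ then $\overline{\pi}$ restricts to an isomorphism $\overline{G} \cong \pi(G)$ and we are done.

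Assume now $|N| = \ell$.  To split the extension, I would apply Lemma \ref{coboundary}(b), whose hypothesis is precisely the conclusion of Lemma \ref{alphabeta0}, to fix a vector $w \in \F_\ell^2$ satisfying $(\alpha, \beta)(g) = \pi(g).w - w$ for every $g \in G$.  Let $t \in \overline{\Pa}_\ell$ be the symplectic element with $\pi(t) = I$, $(\alpha, \beta)(t) = w$, and $\delta(t) = 0$ (the values of $(\alpha', \beta')(t)$ being forced by (\ref{beta'})).  The key computation is that conjugation by $t$ kills both $\alpha$ and $\beta$ on every element of $\overline{G}$, because the cocycle identity (\ref{cocycle}) satisfied by $w$ is exactly what is needed to cancel the corresponding contributions in the $(2,1)$- and $(3,1)$-entries of $t \pi_\ell(g) t^{-1}$.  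Once $\alpha$ and $\beta$ vanish on $\overline{G}' := t\overline{G}t^{-1}$, formula (\ref{beta'}) forces $\alpha'$ and $\beta'$ to vanish there as well, so every element of $\overline{G}'$ is a block-diagonal matrix modified only in the $(4,1)$-entry.  A direct multiplication check then shows that $\delta : \overline{G}' \to \F_\ell$ is a homomorphism; its kernel $H$ meets $N$ trivially and has order $|\pi(G)|$, so $H$ projects isomorphically onto $\pi(G)$ via $\overline{\pi}$.  Combining this with the centrality of $N$, I conclude $\overline{G} \cong \overline{G}' \cong H \times N \cong \pi(G) \times C_\ell$.

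The main obstacle will be carrying out the conjugation-by-$t$ computation cleanly: I need to verify that the cocycle identity from Lemma \ref{coboundary}(b) is exactly what annihilates $(\alpha, \beta)$ on $t \overline{G} t^{-1}$.  All other steps reduce to elementary matrix multiplication in $\overline{\Pa}_\ell$ and a simple order-counting argument to produce the direct-product decomposition.
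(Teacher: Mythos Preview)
Your argument is correct.  The identification of $N$ with a subgroup of $\langle x_4\rangle$ via Lemma~\ref{alphabeta0} and formula~(\ref{beta'}), and the centrality of $x_4$ in $\overline{\Pa}_\ell$, match the paper's proof exactly.  Where you diverge is in the second half: the paper simply asserts that centrality of $x_4$ ``directly implies the desired statement,'' without explaining why the resulting central extension $1\to \langle x_4\rangle \to \overline{G}\to \pi(G)\to 1$ must split.  You instead supply an explicit splitting: you invoke Lemma~\ref{coboundary}(b) to realize $(\alpha,\beta)$ as the coboundary of a vector $w$, conjugate $\overline{G}$ by the element $t\in\overline{\Pa}_\ell$ with $(\alpha,\beta)(t)=w$ and $\pi(t)=1$, and observe via the cocycle identity that this kills $(\alpha,\beta)$ (hence also $(\alpha',\beta')$) on the conjugate $\overline{G}'$.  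At that point $\delta$ is visibly additive on $\overline{G}'$, and its kernel is the desired complement.  The conjugation computation you flag as the ``main obstacle'' is routine: the cocycle relation $(\alpha,\beta)(gh)=(\alpha,\beta)(g)+\pi(g).(\alpha,\beta)(h)$ holds on all of $\overline{\Pa}_\ell$ (not just on $\pi(G)$), and applying it to $tgt^{-1}$ gives $(\alpha,\beta)(tgt^{-1})=(\alpha,\beta)(g)-(\pi(g).w-w)=0$.

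Your route is genuinely more informative than the paper's.  Centrality alone does not force a central extension to split (for instance, when $\ell=2$ and $\pi(G)=\SL_2(\F_2)\cong S_3$, the dicyclic group of order~$12$ is a nonsplit central extension of $S_3$ by $C_2$), so some argument is needed to rule this out.  Your conjugation trick does exactly that, and it has the pleasant feature of reusing Lemma~\ref{coboundary}(b), which the paper proved anyway for later use.  The paper's brevity here is arguably a gap; your version closes it.
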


\begin{proof}

Clearly $\overline{G}$ is an extension of $\pi(G)$ by $\overline{G}_\ell \cap \pi_\ell(\ker(\pi))$.  Since Lemma \ref{alphabeta0} says that the maps (homomorphisms) $\alpha$ and $\beta$ vanish on the latter, we get that $\overline{G} \cap \pi_{\ell}(\ker(\pi)) \subset \langle x_4 \rangle \cong C_{\ell}$, where $x_4$ is the element defined in \S\ref{sylowsection}.  Moreover, it follows from the discussion there that $x_4$ commutes with everything in $\pi_{\ell}(\ker(\pi)) \subset \Sp_4(\mathcal{T} / \ell \mathcal{T})$, which directly implies the desired statement.
\end{proof}

\subsection{The nonexistence of counterexamples for $\ell \geq 3$}

We assume throughout this subsection that $\ell \geq 3$ and proceed to prove the first statement of Theorem \ref{parabolic}.

\begin{proof}[Proof of Theorem \ref{parabolic} for $\ell \geq 3$]

We first consider the case that $\overline{G} \cong \SL_2(\F_{\ell}) \times C_{\ell}$.  In this case, we claim that $f$ is trivial on $G \cap \ker(\pi_{\ell})$.  To see this, assume that $f$ is nontrivial on $G \cap \ker(\pi_{\ell})$.  Let $g \in G$ be an element such that $\pi(g)$ is not unipotent, and let $h \in G$ be an element such that $\pi_{\ell}(h) = x_4$.  Then after possibly translating $g$ or $h$ by an element of $G \cap \ker(\pi_{\ell}) \smallsetminus \ker(f) \neq \varnothing$, we get that $f(g) = f(hg) \neq 0$.  Since $h \in \ker(\pi)$, we have already seen that $(\alpha, \beta)(hg) = (\alpha, \beta)(g)$; meanwhile, one verifies in a straightforward manner that $\delta(hg) = \delta(g) + 1$.  Then putting $hg$ into the formula given by Lemma \ref{coboundary}(a) yields the desired contradiction.

The fact that the homomorphism $f$ is trivial on $G \cap \ker(\pi_{\ell})$ implies that it induces a homomorphism $\overline{f} : \overline{G}_\ell \to \Z / \ell\Z$.  Write $\overline{G}_\ell = S \times \langle x_4 \rangle$, where $S$ is a component isomorphic to $\pi(G)$.  It follows from Proposition \ref{classicalgroups}(a) that $\overline{f}$ is trivial on $S$ except in the case that $\ell = 3$ and $S \cong \SL_2(\F_3)$.  Assume for the moment that we are not in that exceptional case.  Then we must have $\overline{f}_3(x_4) \neq 0$, because otherwise $f$ would be trivial on $G$.  Let $g, h \in G$ be elements such that $\pi_{\ell}(h) = x_4$, $\pi_{\ell}(g) \in S$, and $\pi(g)$ is not unipotent.  Then $f(hg) = 1 \in \Z / \ell$, so that we may apply Lemma \ref{coboundary}(a) to get the formula given there for $\delta(hg)$.  Since $\ell \geq 3$, we have $f(h^2 g) = 2 \neq 0 \in \Z / \ell$, and since again $\delta(h^2 g) = \delta(hg) + 1$ and $(\alpha, \beta)(h^2 g) = (\alpha, \beta)(hg)$, the formula given by Proposition \ref{coboundary}(a) applied to $\delta(h^2 g)$ implies a contradiction.  It follows that there is no counterexample for these cases.

We now assume that $\ell = 3$ and $S \cong \SL_2(\F_3)$.  We claim that there is a subgroup $S' \subset \overline{G}_3$ with $\overline{G}_3 = S' \times \langle x_4 \rangle$ such that $\overline{f}$ is trivial on $S'$, so that the above argument works in this case also by replacing $S$ with $S'$.  If $\overline{f}$ is trivial on $S$, then we take $S' = S$ and are done, so we assume that $\overline{f}$ is not trivial on $S$.  Note that there exists $g \in S$ with $\overline{f}(g) \neq 0$ such that $\pi(g)$ is not unipotent, since by Proposition \ref{classicalgroups}(a)(ii) we have $\ker(\overline{f}) \cap \pi(G) \subset Q_8$ and certainly there are non-unipotent matrices in $\SL_2(\F_3) \smallsetminus Q_8$.  Now again letting $h \in G$ be an element such that $\pi_3(h) = x_4$, if we assume that $\overline{f}(h) = 0$ and apply a similar argument as was used above to the formula for $\delta(hg)$ given by Proposition \ref{coboundary}(a), we get a contradiction.  Therefore, the homomorphism $\overline{f}$ is nontrivial on $\langle x_4 \rangle$ and we may define $S'$ to be $\{x_4^{-\overline{f}_3(y) \overline{f}(x_4)^{-1}}y \ | \ y \in S\}$; it is easy to check that $S' \subset \overline{G}_3$ is a subgroup contained in the kernel of $\overline{f}$ and satisfying $\overline{G} = S' \times \langle x_4 \rangle$.  We have thus shown that there are no counterexamples in the case that $\overline{G} \cong \SL_2(\F_{\ell}) \times C_{\ell}$.

We now consider the case that $\overline{G}_\ell \cong \pi(G)$.  First suppose that there is an element $h \in G \cap \ker(\pi_{\ell})$ with $f(h) \neq 0$.  We shall show that $\overline{G}$ fixes a $2$-dimensional subspace of $\mathcal{T} / \ell \mathcal{T}$, and that therefore $G$ is not a counterexample by Proposition \ref{latticeprop}(c).  Let $x \in \overline{G} \cong \pi(G)$ be any non-unipotent operator and lift it to an element $g \in G$ with $\pi_{\ell}(g) = x$.  If $f(g) = 0$, we let $g' = hg$, and we let $g' = g$ otherwise, so that $f(g') \neq 0$.  Now by Lemma \ref{coboundary}(b), there is a vector $w = (w_1, w_2) \in \F_{\ell}^2$ such that $x = \pi_{\ell}(g')$ fixes the vector $(-1, w_1, w_2, 0) \in \F_{\ell}^4$.  Now Proposition \ref{classicalgroups}(d) says that the subset of non-unipotent operators in $\pi(G)$ generates $\pi(G)$; it follows that the whole group $\overline{G}$ fixes $(-1, w_1, w_2, 0)$.  Since the group $\overline{G}_\ell$ also fixes $(0, 0, 0, 1)$, it fixes the $2$-dimensional subspace generated by these two vectors and so $G$ is not a counterexample.

Now suppose that the homomorphism $f$ is trivial on $G \cap \ker(\pi_{\ell})$, so that $f$ induces a homomorphism $\overline{f} : \overline{G} \to \Z / \ell$.  We know that $\overline{f}$ cannot be trivial on $\overline{G} \cong \pi(G)$, because otherwise $f$ would be trivial on all of $G$, so we are left with the only possibility being that $\ell = 3$ and $\overline{G}_3 \cong \SL_2(\F_3)$ with the induced homomorphism $\overline{f} : \SL_2(\F_3) \to \Z / 3$ being a surjection whose kernel is $Q_8$.  There exist non-unipotent operators in $\SL_2(\F_3) \smallsetminus \ker(\overline{f}_3)$ which generate all of $\SL_2(\F_3)$ by Proposition \ref{classicalgroups}(d).  Then the argument proceeds in a similar fashion: we know from Lemma \ref{coboundary}(b) that there is a vector $w = (w_1, w_2) \in \F_3^2$ such that $x \in \overline{G}_3 \smallsetminus \ker(\overline{f}_3)$ fixes the vector $(-1, w_1, w_2, 0) \in \F_{\ell}^4$ if $x$ is not unipotent; since the set of such elements generates all of $\overline{G}_3$, we get that the whole group $\overline{G}_3$ fixes the $2$-dimensional subspace spanned by $\{(-1, w_2, w_3, 0), (0, 0, 0, 1)\}$ and therefore $G$ is not a counterexample.

\end{proof}

\subsection{Classifying counterexamples for $\ell = 2$}

In this subsection, we assume that $\ell = 2$ and finish the proof of Theorem \ref{parabolic}.  We first present the following useful lemma.

\begin{lem} \label{paraboliclattices}

Let $S \subset \overline{\Sy}_2$ be the subgroup isomorphic to $S_3$ which fixes the subspace $\langle \overline{e}_1, \overline{e}_4 \rangle \subset \mathcal{T} / \ell \mathcal{T}$ and acts as $\SL_2(\F_2)$ on its complement subspace $\langle \overline{e}_2, \overline{e}_3 \rangle \subset \mathcal{T} / \ell \mathcal{T}$, and let $x_4 \in \overline{\Sy}_2$ be the element defined in \S\ref{sylowsection}.  Suppose that $G \subset \Sy_2$ is a subgroup satisfying one of the following:

\begin{enumerate}
\item[(i)] $\overline{G} = \langle x_4 \rangle \times S$;
\item[(ii)] $\overline{G} \subset \langle x_4 \rangle \times S$ is the subgroup isomorphic to $A_3 \times C_2$; or 
\item[(iii)] $\overline{G} \subset \langle x_4 \rangle \times S$ is the subgroup given by $\{(x, \phi(x)) \in S \times \langle x_4 \rangle \ | \ x \in S\}$, where $\phi : S \to \langle x_4 \rangle$ is the unique surjective homomorphism.
\end{enumerate}

Then the only nontrivial $G$-invariant sublattices of $\mathcal{T}$ which properly contain $\ell\mathcal{T}$ are $M_1 := \langle \ell\mathcal{T}, e_1, e_4 \rangle$, $M_2 := \langle \ell\mathcal{T}, e_2, e_3 \rangle$, $L_1$ and $L_3$, where the $L_i$'s are as in (\ref{latticedef}).

\end{lem}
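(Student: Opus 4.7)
The lemma concerns $G$-stable sublattices $\mathcal{L}$ with $\ell\mathcal{T} \subsetneq \mathcal{L} \subsetneq \mathcal{T}$, which correspond bijectively via $\mathcal{L} \mapsto \mathcal{L}/\ell\mathcal{T}$ to proper nonzero $\overline{G}$-invariant $\Fl$-subspaces of $V := \mathcal{T}/\ell\mathcal{T}$; under this correspondence the sublattices $M_1$, $M_2$, $L_1$, $L_3$ match the subspaces $W := \langle \overline{e}_1, \overline{e}_4 \rangle$, $W' := \langle \overline{e}_2, \overline{e}_3 \rangle$, $W' \oplus \langle \overline{e}_4 \rangle$, and $\langle \overline{e}_4 \rangle$, respectively. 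The plan is to show that these four subspaces are the only proper nonzero $\overline{G}$-invariant subspaces of $V$, treating all three cases (i)--(iii) uniformly.

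First, I would observe that $V = W \oplus W'$ as a decomposition of $\overline{G}$-modules in each case: every element of $S$ fixes $W$ pointwise and preserves $W'$ (acting on it as all of $\SL_2(\Fl)$), while $x_4$ fixes $W'$ pointwise and preserves $W$ (sending $\overline{e}_1 \mapsto \overline{e}_1 + \overline{e}_4$). Next I would record two features of the $\overline{G}$-action shared across all three cases. The copy of $A_3 \subset S$ sits inside $\overline{G}$ in each case (in case (iii) via $x \mapsto (x, \phi(x)) = (x, 1)$ for $x \in A_3$), acts trivially on $W$, and acts on $W'$ through an order-$3$ element of $\SL_2(\Fl)$; since a generator's minimal polynomial over $\Fl$ divides $x^3-1 = (x-1)(x^2+x+1)$, has degree at most $2$, and cannot be $x-1$ (else the generator would be trivial), it must equal the irreducible $x^2+x+1$, which has no root in $\Fl$. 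Thus $W'$ is $A_3$-irreducible with no nonzero $A_3$-fixed vector. Separately, the image of $\overline{G}$ in $\GL(W)$ equals the two-element group $\{1,\, x_4|_W\}$ in every case: in cases (i) and (ii) because $x_4 \in \overline{G}$, and in case (iii) because each pair $(\sigma, x_4)$ with $\sigma \in S \smallsetminus A_3$ acts on $W$ as $x_4|_W$ (the $\sigma$-factor fixing $W$ pointwise). The invariant subspaces of $W$ under $\{1, x_4|_W\}$ are exactly $0$, $\langle \overline{e}_4 \rangle$, and $W$.

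Given any proper nonzero $\overline{G}$-invariant subspace $U \subset V$, I would then examine $U \cap W'$, which is $A_3$-invariant and hence equals $0$ or $W'$. If $U \supset W'$, then $U = W' \oplus (U \cap W)$ with $U \cap W$ a $\{1, x_4|_W\}$-invariant subspace of $W$, so $U$ lies in $\{W',\, W' \oplus \langle \overline{e}_4 \rangle,\, V\}$; discarding $V$ leaves the subspaces corresponding to $M_2$ and $L_1$. Otherwise $U \cap W' = 0$, so the projection $U \to W$ along $W'$ is injective and $U$ is the graph of an $\overline{G}$-equivariant linear map $\phi$ from its image in $W$ into $W'$; since $A_3$ acts trivially on $W$ and has no nonzero fixed vector in $W'$, such a $\phi$ must be zero, forcing $U \subset W$, hence $U = \langle \overline{e}_4 \rangle$ or $U = W$, corresponding to $L_3$ and $M_1$. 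These four subspaces therefore exhaust the possibilities.

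The main obstacle will be keeping the three cases on equal footing when verifying that the image of $\overline{G}$ in $\GL(W)$ is always the same two-element group; case (iii) is the most delicate, since $x_4$ is not itself in $\overline{G}$, only its product with an involution in $S \smallsetminus A_3$. Once this point is settled together with the $A_3$-irreducibility of $W'$, the enumeration of invariant subspaces follows directly from the $\overline{G}$-module decomposition $V = W \oplus W'$ and a Schur-type vanishing argument for maps between the summands.
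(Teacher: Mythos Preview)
Your proof is correct and follows essentially the same strategy as the paper: reduce to $\overline{G}$-invariant subspaces of $V = \mathcal{T}/2\mathcal{T}$, use the $\overline{G}$-module decomposition $V = W \oplus W'$, and exploit that $A_3 \subset \overline{G}$ acts irreducibly on $W'$ while the image of $\overline{G}$ in $\GL(W)$ is the order-$2$ group generated by $x_4|_W$. The only stylistic difference is that where the paper argues by finding, for each $v = m_1 + m_2$, explicit elements of $\overline{G}$ fixing $m_1$ and moving $m_2$ (using that $A_3$ is transitive on $W' \smallsetminus \{0\}$), you instead use the Schur-type observation that any $\overline{G}$-equivariant map from a subspace of $W$ to $W'$ vanishes because $A_3$ is trivial on the source and fixed-point-free on the target; this handles all three cases uniformly and is a touch cleaner, but the underlying content is the same.
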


\begin{proof}

We write $\overline{M}_1$, $\overline{M}_2$, $\overline{L}_1$, and $\overline{L}_3$ for the subspaces of $\mathcal{T} / 2\mathcal{T}$ given by the quotients by $2\mathcal{T}$ of $M_1$, $M_2$, $L_1$, and $L_3$ respectively.  The statement of the lemma is equivalent to saying that the only proper, nontrivial $\overline{G}_2$-invariant subspaces of $\mathcal{T} / 2\mathcal{T}$ are $\overline{M}_1$, $\overline{M}_2$, $\overline{L}_1$, and $\overline{L}_3$.

Note that in cases (i), (ii), and (iii) of the statement, the only proper nontrivial $\overline{G}_2$-invariant subspace of $\overline{M}_1$ is $\langle \overline{e}_4 \rangle = L_3$, while $\overline{M}_2$ has no proper, nontrivial $\overline{G}_2$-invariant subspaces.  Choose any vector $v \in \mathcal{T} / \ell\mathcal{T}$, and let $\overline{L} \in \mathcal{T} / \ell\mathcal{T}$ be the smallest $\overline{G}_2$-invariant subspace containing $v$.  Since the vector space $\mathcal{T} / \ell\mathcal{T}$ can be decomposed as the direct sum $\overline{M}_1 \oplus \overline{M}_2$, we may write $v = m_1 + m_2$ for some vectors $m_1 \in \overline{M}_1$ and $m_2 \in \overline{M}_2$.

We claim that $\overline{L} = \overline{L}_1 \oplus \overline{L}_2$, where $\overline{L}_i$ is the smallest $\overline{G}_2$-invariant subspace of $\overline{M}_i$ containing $m_i$ for $i = 1, 2$.  If $m_1 = 0$ or $m_2 = 0$, this is trivially true, so we assume that $m_1, m_2 \neq 0$.  It is straightforward to check for cases (i), (ii), and (iii) given in the statement that given any element $m'_2 \in \overline{M}_2 \smallsetminus \{0, m_2\}$, there is an element of $\overline{G}_2$ which sends $v = m_1 + m_2$ to $m_1 + m'_2$; taking their difference, we get that $0 \neq m_2 - m'_2 \in \overline{M}_2$ lies in $L$.  Since in all of the cases in the statement, $\overline{G}_2$ acts irreducibly on $\overline{M}_2$, we get $\overline{L}_2 = \overline{M}_2 \subsetneq L$.  Now since $m_2 \in \overline{L}$, we have $v - m_2 = m_1 \in \overline{L}$, so that $\overline{L}_1$ by definition is contained in $\subset \overline{L}$.  We therefore have $\overline{L}_1 \oplus \overline{L}_2 \subset \overline{L}$, and since $\overline{L}_1 \oplus \overline{L}_2$ is a $\overline{G}_2$-invariant subspace containing $v$, this inclusion of subspaces is in fact an equality, whence our claim.  The statement of the lemma follows now from the observation that $\overline{L}_1 = \overline{M}_2 \oplus \overline{L}_3$.
\end{proof}

\begin{proof}[Proof of Theorem \ref{parabolic} for $\ell = 2$]

We first consider the case that $\overline{G} \cong \pi(G) \times C_2$.  Here we have that $f$ is trivial on $G \cap \ker(\pi_2)$ by the exact same argument we used in \S\ref{sylowsection} under the case that $\ell = 3$ and $\overline{G} \cong \pi(G) \times C_{\ell}$, so we again have an induced homomorphism $\overline{f} : \overline{G} \to \Z / 2$.  As before, we write $\overline{G} = S \times \langle x_4 \rangle$, where $S \cong \pi(G)$.  We also have that $f(x_4) \neq 0$ by the same argument as was used under Case 1.  Fix an element $h \in G$ with $\pi_2(h) = x_4$.  We now claim that $G$ is a counterexample if and only if for each element $g \in G$ such that $\pi(g)$ is not unipotent, we have either (i) $f(g) = 1$ and $\delta(g)$ is equal to the expression in terms of $(\alpha, \beta)(g)$ in the formula given in Lemma \ref{coboundary}(a), or (ii) $f(g) = 0$ and $\delta(g)$ is \textit{not} equal to the expression in terms of $(\alpha, \beta)(g)$ in the formula given in Lemma \ref{coboundary}(a).  Indeed, if such an element $g \in G$ satisfies neither (i) nor (ii), then either $g$ or $hg$ clearly violates the conclusion of Lemma \ref{coboundary}(a), which contradicts the fact that $G \in \Fix(4)$.  Suppose conversely that either (i) or (ii) holds for all such elements $g \in G$; we will show that now $G$ is a counterexample.  For each element $g \in G$ such that $\pi(g)$ is unipotent and fixes a nontrivial vector $v = (v_1, v_2) \in \F_2^2$, we have that $\pi_2(g)$ fixes the $2$-dimensional subspace of $\mathcal{T} / 2\mathcal{T}$ spanned by $\{(0, v_1, v_2, 0), (0, 0, 0, 1)\}$.  Meanwhile, for each $g \in G$ such that $\pi(g) = \pi(hg)$ is not unipotent, Lemma \ref{coboundary}(b) says that for some vector $w = (w_1, w_2) \in \F_2^2$, either $\pi_2(g)$ or $\pi_2(hg)$ fixes the vector $(-1, w_1, w_2, 0) \in \mathcal{T} / 2\mathcal{T}$.  An easy computation shows that if $\pi_2(g)$ fixes $(-1, w_1, w_2, 0)$, then $\pi_2(hg)$ does not fix $(-1, w_1, w_2, 0)$ but $hg$ does fix $(2, 2, 2, 1) \in \mathcal{T} / 2\mathcal{T}$, and vice versa.  Moreover, the subspace of $\mathcal{T} / 2\mathcal{T}$ fixed by $\pi_2(g)$ contains the vector $(0, 0, 0, 1)$ but can have dimension at most $2$ (otherwise $\pi(g) - 1$ would be non-invertible so that $\pi(g)$ would be unipotent).  It follows that there is no $2$-dimensional subspace of $\mathcal{T} / 2\mathcal{T}$ fixed by the whole group $\overline{G}$.  Since of course $f$ is not trivial on $G$, we get that $G$ is a counterexample by Proposition \ref{latticeprop}(c) and by a quick check of quotients of the $G$-stable sublattices provided by Lemma \ref{paraboliclattices}.  Now it is clear that for any subgroup $H \subset \ker(\pi_2) \cap \ker(f)$, the group generated by the subgroups $G$ and $H$ is also a counterexample since multiplying by elements in $H$ will not affect their images under $\alpha$, $\beta$, $\delta$, or $f$.

We now consider the case that $\ell = 2$ and $\overline{G} \cong \pi(G)$.  We first eliminate the possibility that $\overline{G} \cong \pi(G) = A_3$.  Indeed, if we have $\overline{G} \cong \pi(G) = A_3$, then the group $\overline{G} \cong \pi(G)$ is cyclic and there is some element $g \in G$ with $f(g) \neq 0$ such that $\pi_2(g)$ generates $\overline{G}$ (otherwise $f$ would be trivial on $G$).  Then by Lemma \ref{coboundary}(a), the operator $\pi_2(g)$ fixes a vector of the form $(-1, w_1, w_2, 0) \in \mathcal{T} / 2\mathcal{T}$, implying that all of $\overline{G}$ fixes the subspace of $\mathcal{T} / 2\mathcal{T}$ spanned by $\{(-1, w_1, w_2, 0), (0, 0, 0, 1)\}$ and so $G$ is not a counterexample.

We therefore assume that $\overline{G} \cong \pi(G) = \SL_2(\F_2)$.  Let $w = (w_1, w_2) \in \F_2^2$ be the vector provided by Lemma \ref{coboundary}(b) applied to $G$, and let $S \subset \overline{G}_2 \times \langle x_4 \rangle$ be the subgroup which fixes the vector $(-1, w_1, w_2, 0)$.  A straightforward calculation similar to the ones done in the proof of Lemma \ref{coboundary}(a) shows that the first three entries of $(-1, w_1, w_2, 0)$ are fixed under multiplication by every matrix in $\overline{G} \times \langle x_4 \rangle$; meanwhile, it is immediate that $x_4$ acts by changing the final entry of any vector in $\mathcal{T} / \ell \mathcal{T}$ whose first entry is nontrivial.  It follows that for each $y \in \overline{G}_2$ we have $y \in S$ or $x_4 y \in S$ and so $S \cong \SL_2(\F_2)$.  If $\overline{G}_2 = S$, then $\overline{G}_2$ is not a counterexample by Proposition \ref{latticeprop}(c) because it fixes the subspace of $\mathcal{T} / \ell \mathcal{T}$ spanned by $\{(-1, w_1, w_2, 0), (0, 0, 0, 1)\}$.  The only alternative is that $\overline{G}_2 = \{x_4^{\phi(x)} y \ | \ y \in S\}$ where $\phi$ is the surjection from $\SL_2(\F_2)$ to $\Z / 2$.  In this case, since not everything in $\overline{G}_2$ fixes $(-1, w_1, w_2, 0)$; the unipotent elements already each fix some $2$-dimensional subspace; and the non-unipotent elements do fix the $2$-dimensional subspace spanned by $\{(-1, w_1, w_2, 0), (0, 0, 0, 1)\}$ but cannot fix a larger subspace (as was argued above), we get that $\overline{G}$ does not fix a $2$-dimensional subspace of $\mathcal{T} / \ell \mathcal{T}$.  Therefore, as long as $f$ is not trivial on $G$ (which means that either $f$ is nontrivial on $G \cap \ker(\pi_2)$ or that it factors through the only nontrivial homomorphism from $\overline{G} \cong \SL_2(\F_2)$ to $\Z / 2$), we have that $G$ is a counterexample by Proposition \ref{latticeprop}(c) and by a quick check of quotients of the $G$-stable sublattices provided by Lemma \ref{paraboliclattices}.  Now it is clear as before that for any subgroup $H \subset \ker(\pi_2) \cap \ker(f)$, the group generated by the  subgroups $G$ and $H$ is also a counterexample because multiplying by elements in $H$ will not affect images under $\alpha$, $\beta$, and $\delta$; since each element of $\overline{G}_2$ fixes a $2$-dimensional subgroup of $\F_2^4$, we do not need $f$ to take a certain value on any particular element of $\overline{G}_2$.
\end{proof}

\end{document}